\providecommand{\U}[1]{\protect\rule{.1in}{.1in}}
\newtheorem{theorem}{Theorem}[subsection]
\newtheorem{lemma}[theorem]{Lemma}
\newtheorem{proposition}[theorem]{Proposition}
\newtheorem*{theorem*}{Theorem}
\theoremstyle{definition}
\newtheorem{definition}[theorem]{Definition}
\newtheorem{examplex}[theorem]{Example}
\theoremstyle{remark}
\newenvironment{remark}
  {\pushQED{\qed}\remarkx}
  {\popQED\endremarkx}
\thanks{}
\numberwithin{equation}{section}
\def\@tocline#1#2#3#4#5#6#7{\relax
  \ifnum #1>\c@tocdepth 
  \else
    \par \addpenalty\@secpenalty\addvspace{#2}%
    \begingroup \hyphenpenalty\@M
    \@ifempty{#4}{%
      \@tempdima\csname r@tocindent\number#1\endcsname\relax
    }{%
      \@tempdima#4\relax
    }%
    \parindent\z@ \leftskip#3\relax \advance\leftskip\@tempdima\relax
    \rightskip\@pnumwidth plus4em \parfillskip-\@pnumwidth
    #5\leavevmode\hskip-\@tempdima
      \ifcase #1
       \or\or \hskip 1.5 em \or \hskip 2em \else \hskip 3em \fi%
      #6\nobreak\relax
    \hfill\hbox to\@pnumwidth{\@tocpagenum{#7}}\par
    \nobreak
    \endgroup
  \fi}
\begin{document}
\newcommand{\R}{{\mathbbm R}}
\newcommand{\C}{{\mathbbm C}} 
\newcommand{\T}{{\mathbbm T}}
\newcommand{\D}{{\mathbbm D}}
\renewcommand{\P}{\mathbb P}

\newcommand{\Aa}{{\mathcal A}}
\newcommand{\Ii}{{\mathbbm K}}
\newcommand{\Jj}{{\mathbbm J}}
\newcommand{\Nn}{{\mathcal N}}
\newcommand{\Ll}{{\mathcal L}}
\newcommand{\Tt}{{\mathcal T}}
\newcommand{\Gg}{{\mathcal G}}
\newcommand{\Dd}{{\mathcal D}}
\newcommand{\Cc}{{\mathcal C}}
\newcommand{\Oo}{{\mathcal O}}

\newcommand{\pr}{\operatorname{pr}}
\newcommand{\bla}{\langle \hspace{-2.7pt} \langle}
\newcommand{\bra}{\rangle\hspace{-2.7pt} \rangle}
\newcommand{\blq}{[ \! [}
\newcommand{\brq}{] \! ]}
 \newcommand{\into}{\mathbin{\vrule width1.5ex height.4pt\vrule height1.5ex}}

\newcommand{\TODO}[1]{
     \todo[
     size=\tiny]{#1}
     }

\setlength{\marginparwidth}{2cm}

\title{Deformations of Linear Lie Brackets}

\author{Pier Paolo La Pastina}
\address{Dipartimento di Matematica ``Guido Castelnuovo'', Universit\`a degli studi di Roma ``La Sapienza'', P.le Aldo Moro 5, I-00185 Roma, Italy.}
\email{lapastina@mat.uniroma1.it}

\author{Luca Vitagliano}
\address{DipMat, Universit\`a degli Studi di Salerno, via Giovanni Paolo II n${}^\circ$ 123, 84084 Fisciano (SA) Italy.}
\email{lvitagliano@unisa.it}

\begin{abstract}
A VB-algebroid is a vector bundle object in the category of Lie algebroids. We attach to every VB-algebroid a differential graded Lie algebra and we show that it controls deformations of the VB-algebroid structure. Several examples and applications are discussed. This is the first in a series of papers devoted to deformations of vector bundles and related structures over differentiable stacks.
\end{abstract}

\maketitle

\tableofcontents

\section*{Introduction}

Lie algebroids are ubiquitous in differential geometry: they encompass several algebraic and geometric structures such as Lie algebras, tangent bundles, foliations, Poisson brackets, Lie algebra actions on manifolds and so on, and they are the infinitesimal counterparts of Lie groupoids. The notion of Lie algebroid appeared for the first time in the work of Pradines \cite{pradines:theorie} and has become more and more important in the last 50 years. In particular, deformations of Lie  {algebroids have been discussed by Crainic and Moerdijk \cite{crainic:def}, while deformations of Lie groupoids have been studied very recently by Crainic, Mestre and Struchiner \cite{crainic:def2}}.

VB-algebroids are vector bundle objects in the category of Lie algebroids \cite{mackenzie:double, gracia-saz:vb}. They emerge naturally in the study of Lie algebroids. For instance, the tangent and the cotangent bundles of a Lie algebroid are VB-algebroids. Additionally, VB-algebroids are generalizations of ordinary representations of Lie algebroids: specifically they are equivalent to 2-term representations up to homotopy of Lie algebroids, hence to (special kinds of) representations of Lie algebroids on graded vector bundles \cite{abad:ruth, gracia-saz:vb}. Finally, VB-algebroids are the infinitesimal counterparts of VB-groupoids. The latter serve as models for vector bundles over certain singular spaces: \emph{differentiable stacks} \cite{behrend:diff}. Examples of differentiable stacks are orbifolds, leaf spaces of foliations and orbit spaces of Lie group actions.

This is the first in a series of papers devoted to deformations of vector bundles over differentiable stacks and related deformation problems. A first step in this direction has been taken by del Hoyo and Ortiz in \cite{delhoyo:morita}, where the authors show that the VB-cohomology of a VB-groupoid is actually \emph{VB-Morita invariant}, i.e.~it is an invariant of the associated vector bundle of differentiable stacks. Notice that several important geometric structures, like Riemannian metrics, symplectic forms, complex structures, etc., can be seen as vector bundle maps. In order to study deformations of the former, it is then useful to study deformations of vector bundles themselves first. In this paper, we begin this program working at the infinitesimal level, i.e.~studying deformations of VB-algebroids. More precisely, we study deformations of VB-algebroid structures on double vector bundles. In the second paper of the series we will study deformations of VB-groupoids and their behavior under the Lie functor \cite{lapastina:def2}.

The paper is divided in two main sections. The first one presents the general theory, and the second one discusses examples and applications. In its turn, the first section is divided in four subsections. In Subsection \ref{Sec:deformations1} we recall from \cite{crainic:def} the differential graded Lie algebra (DGLA) controlling deformations of Lie algebroids.  We also discuss gauge equivalent deformations, something that is missing in the original discussion by Crainic and Moerdijk. In Subsection \ref{Sec:deformations2} we recall the basics of VB-algebroids, in particular their description in terms of graded manifolds 
In Subsection \ref{sec:lin_def} we discuss deformations of VB-algebroids. Let $(W \Rightarrow E; A \Rightarrow M)$ be a VB-algebroid. In particular $W \Rightarrow E$ is a Lie algebroid, so it has an associated deformation DGLA. We show that deformations of $(W \Rightarrow E; A \Rightarrow M)$ are controlled by the sub-DGLA of  \emph{linear} cochains, originally introduced in \cite{etv:infinitesimal}, that we call the \emph{linear deformation complex}, and we provide various equivalent descriptions of this object. The most efficient one involves the \emph{homogeneity structure} of $W \to A$, i.e. the action of the monoid $\mathbb R_{\geq 0}$ on the total space by fiber-wise homotheties: linear deformation cochains are precisely those that are invariant under the action by (non-zero) homotheties. It is clear that this action induces graded subalgebras of the algebras of functions, differential forms and multivectors on the total space of a vector bundle and can be used to define \emph{linear} objects in these algebras, thus giving a unified framework to the original definitions in \cite{bursztyn:mult} and \cite{iglesias:univ}. \color{black} We recall this briefly in Appendix \ref{Sec:homogeneity}. Another important description of the linear deformation complex is in terms of graded geometry. It is well-known that Lie algebroids are equivalent to DG-manifolds concentrated in degree 0 and 1 and VB-algebroids are equivalent to vector bundles in the category of such graded manifolds \cite{mehta:thesis, vaintrob:lie, voronov:q}. Moreover, it is (implicitly) shown already in \cite{crainic:def} that the deformation DGLA of a Lie algebroid $A \Rightarrow M$ is isomorphic to the DGLA of vector fields on $A[1]$, giving an elegant and manageable interpretation. A similar interpretation becomes very useful in the case of VB-algebroids, particularly in some examples.

In Subsection \ref{sec:linearization_1} we show that it is possible to ``linearize'' deformation cochains of the top algebroid $W \Rightarrow E$ of a VB-algebroid $(W \Rightarrow E; A \Rightarrow M)$, adapting a technique from \cite{cabrera:hom}. The main consequence is that the linear deformation cohomology is embedded, as a graded Lie algebra, in the deformation cohomology of the top algebroid.

In the second section of the paper we present examples. We discuss in details particularly simple instances of VB-algebroids coming from linear algebra, namely VB-algebras and LA-vector spaces (Subsections \ref{sec:VB-alg} and \ref{sec:LA-vect} respectively). VB-algebras are equivalent to Lie algebra representations, and our discussion encompasses the classical theory of Nijenhuis and Richardson \cite{nijenhuis:coh, nijenhuis:def}. In Subsection \ref{sec:tangent-VB}, we discuss deformations of the tangent and the cotangent VB-algebroids of a Lie algebroid. Partial connections along foliations and Lie algebra actions on vector bundles can be also encoded by VB-algebroids and we study the associated deformation complexes in Subsections \ref{sec:partial_conn} and \ref{sec:Lie_vect} respectively. We also discuss VB-algebroids of type $1$ in the classification of Gracia-Saz and Mehta \cite{gracia-saz:vb}. Their deformation cohomology is canonically isomorphic to that of the base algebroid (Subsection \ref{sec:type_1}). 

We usually indicate with a bullet ${}^\bullet$ the presence of a degree in a graded vector space. If $V^\bullet$ is a graded vector space, its \emph{shift by one} $V[1]^\bullet$ is defined by $V[1]^k = V^{k+1}$. We assume the reader is familiar with graded manifolds and the graded geometry description of Lie algebroids.
Here, we only recall that a graded manifold is \emph{concentrated} in degree $k, \ldots, k+l$, if the degrees of its coordinates range from $k$ to  $k+l$ and a DG-manifold is a graded manifold equipped with an homological vector field. For instance, if $A \Rightarrow M$ is a Lie algebroid, then shifting by one the degree of the fibers of the vector bundle $A \to M$, we get a DG-manifold $A[1]$, concentrated in degree $0$ and $1$, whose homological vector field is the de Rham differential $d_A$ of $A$. Explicitly, the algebra of smooth functions on $A[1]$ is $C^\infty (A[1]) = \Omega^\bullet_A := \Gamma (\wedge^\bullet A^*)$. Correspondence $A \rightsquigarrow A[1]$ establishes an equivalence between the category of Lie algebroids and the category of DG-manifolds concentrated in degree $0$ and $1$ \cite{vaintrob:lie}. We stress that the graded manifold $A[1]$ is obtained from $A$ by \emph{assigning a degree $1$} to the linear fiber coordinates. We warn the unfamiliar reader that, despite the notation, the shift $A \rightsquigarrow A[1]$ is (related but) different from the degree shift for a graded vector space discussed at the beginning of this paragraph. The reader can find more details \cite{mehta:thesis} which is also our main reference for graded geometry.

\section{Deformations of VB-algebroids}

\subsection{Deformations of Lie algebroids}\label{Sec:deformations1}

A Lie algebroid $A \Rightarrow M$ over a manifold $M$ is a vector bundle $A \to M$ with a Lie bracket $[-,-]$ on its space of sections $\Gamma (A)$ and a bundle map $\rho: A \to TM$, satisfying the Leibniz rule:
\[
[\alpha, f \beta] = \rho(\alpha)(f) \beta + f [\alpha, \beta],
\]
for all $\alpha, \beta \in \Gamma(A)$, $f \in C^{\infty}(M)$.

We briefly recall the deformation theory of Lie algebroids, originally due to Crainic and Moerdijk \cite{crainic:def}, adding some small details about equivalence of deformations which are missing in the original treatment. We begin with a vector bundle $E \to M$. Let $k \geq 0$.

\begin{definition} A  \emph{multiderivation} of $E$ with $k$ entries (and $C^{\infty}(M)$-multilinear symbol), also called a $k$-\emph{derivation}, is a skew-symmetric, $\mathbb{R}$-$k$-linear map
\[
c: \Gamma(E) \times \dots \times \Gamma(E) \to \Gamma(E)
\]
such that there exists a bundle map $\sigma_c: \wedge^{k-1} E \to TM$, the \emph{symbol} of $c$, satisfying the following Leibniz rule:
\[
c(\alpha_1, \dots, \alpha_{k-1}, f \alpha_k) = \sigma_c (\alpha_1, \dots, \alpha_{k-1})(f) \alpha_k + f c(\alpha_1, \dots, \alpha_k),
\]
for all $\alpha_1, \dots, \alpha_k \in \Gamma(E)$, $f \in C^{\infty}(M)$.
\end{definition}

$1$-derivations are simply \emph{derivations}, $2$-derivations are called \emph{biderivations}. The space of derivations of $E$ is denoted by $\mathfrak D(E)$ (or $\mathfrak D (E, M)$ if we want to insist on the fact that the base of the vector bundle $E$ is $M$). The space of $k$-derivations is denoted $\mathfrak D^k (E)$ (or $\mathfrak D^k (E,M)$). In particular, $\mathfrak D^1 (E) = \mathfrak D (E)$. We also put $\mathfrak D^0 (E) = \Gamma (E)$ and $\mathfrak D^\bullet (E) = \bigoplus_{k \geq 0} \mathfrak D^k (E)$. Then $\mathfrak D^{\bullet}(E)[1]$, endowed with the \emph{Gerstenhaber bracket} $\llbracket -,- \rrbracket$, is a graded Lie algebra. We recall that, for $c_1 \in \mathfrak D^k(E)$, and $c_2 \in \mathfrak D^l (E)$, the \emph{Gerstenhaber product} of $c_1$ and $c_2$ is the $\mathbb R$-$(k+l-1)$-linear map $c_1 \circ c_2$ given by
\[
(c_1 \circ  c_2) (\alpha_1, \dots, \alpha_{k+l-1}) = \sum_{\tau \in S_{l,k-1}} (-1)^{\tau} c_1(c_2 (\alpha_{\tau(1)}, \dots, \alpha_{\tau(l)}), \alpha_{\tau(l+1)}, \dots, \alpha_{\tau(l+k-1)}),
\]
for all $\alpha_1, \dots, \alpha_{k+l-1} \in \Gamma(E)$, and the Gerstenhaber bracket is defined by
\[
\llbracket c_1, c_2 \rrbracket = (-1)^{(k-1)(l-1)} c_1 \circ c_2 - c_2 \circ c_1.
\]
The graded Lie algebra $\mathfrak D^\bullet (E)[1]$ first appeared in \cite{grab:Lie}.

The group of vector bundle automorphisms of $E$ acts naturally on multiderivations of $E$. If $\phi: E \to E$ is an automorphism covering the diffeomorphism $\phi_M : M \to M$, then $\phi$ acts on sections of $E$ (by pull-back) via the following formula:
\[
\phi^* \alpha := \phi^{-1} \circ \alpha \circ \phi_M, \quad \alpha \in \Gamma (E),
\]
and it acts on higher degree multiderivations via:
\[
(\phi^* c) (\alpha_1, \dots, \alpha_{k}) := \phi^* \left(c(\phi^{-1}{}^* \alpha_1, \dots, \phi^{-1}{}^* \alpha_{k})\right)
\]
for all $ \alpha_1, \dots, \alpha_{k} \in \Gamma(E), c \in \mathfrak D^k (E)$. Moreover, $\phi$ acts in the obvious way on sections of the dual bundle $E^*$. It is clear that
\begin{equation}\label{eq:phi^*}
\begin{aligned}
\phi^*(f \alpha) & = \phi_M^* f \cdot \phi^* \alpha, \\
\phi^* (c(\alpha_1, \dots, \alpha_k)) & = (\phi^* c) (\phi^* \alpha_1, \dots, \phi^* \alpha_k), \\
\phi_M^* \langle \varphi, \alpha \rangle & = \langle \phi^* \varphi, \phi^* \alpha \rangle,
\end{aligned}
\end{equation}
for all $\alpha, \alpha_1, \ldots, \alpha_k \in \Gamma (E)$, $f \in C^\infty (M)$, and $\varphi \in \Gamma (E^\ast)$, where $\langle -,- \rangle : E^\ast \otimes E \to \mathbb R$ is the duality pairing. 
Finally, $\phi$ acts on the exterior algebras of $E$ and $E^\ast$, and it also acts on vector bundle maps $\wedge^\bullet E \to TM$ in the obvious way.

A direct computation shows that the action of vector bundle automorphisms on multiderivations does also respect the Gerstenhaber bracket, i.e. 
\begin{equation}\label{eq:pb_Gerst}
\phi^* \llbracket c_1, c_2 \rrbracket = \llbracket \phi^* c_1, \phi^* c_2 \rrbracket
\end{equation}
for all $c_1, c_2 \in \mathfrak D^\bullet(E)$. Additionally,
\begin{equation}\label{eq:phi^*sigma}
\phi^* \sigma_c = \sigma_{\phi^* c}.
\end{equation}
for all $c \in \mathfrak D^\bullet(E)$.

If $A \Rightarrow M$ is a Lie algebroid, the Lie bracket $b_A = [-,-]$ on sections of $A$ is a biderivation and it contains the full information about $A \Rightarrow M$. Additionally, $\llbracket b_A, b_A \rrbracket = 0$ as a consequence of the Jacobi identity. We summarize this remark with the following

\begin{proposition}\label{prop:def_MC}
Lie algebroid structures on $A \to M$ are in one-to-one correspondence with Maurer-Cartan elements in the graded Lie algebra $\mathfrak D^\bullet (A)[1]$, i.e. ~degree $1$ elements $b$ such that $\llbracket b, b \rrbracket = 0$.
\end{proposition}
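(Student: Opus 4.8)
The plan is to unwind both directions of the correspondence and to reduce the Maurer-Cartan equation to the Jacobi identity by a direct computation of the Gerstenhaber square. First I would pin down the degree conventions: under the shift, $\mathfrak{D}^k(A)$ sits in degree $k-1$ in $\mathfrak{D}^\bullet(A)[1]$, so a degree $1$ element is precisely a biderivation $b \in \mathfrak{D}^2(A)$. By the definition of multiderivation, such a $b$ is exactly an $\mathbb{R}$-bilinear skew-symmetric bracket $b = [-,-]$ on $\Gamma(A)$ together with a bundle map $\sigma_b : A \to TM$ (its symbol) obeying the Leibniz rule $b(\alpha, f\beta) = \sigma_b(\alpha)(f)\beta + f\, b(\alpha, \beta)$. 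Setting $\rho := \sigma_b$, this is precisely the data of a bracket and an anchor satisfying the Lie algebroid Leibniz rule. The forward direction is then immediate and already observed in the text: the bracket of a Lie algebroid is such a biderivation, and the Jacobi identity is what makes it a Maurer-Cartan element.

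For the reverse direction the content is to show that $\llbracket b, b \rrbracket = 0$ is equivalent to the Jacobi identity for the associated bracket. Specializing the Gerstenhaber bracket to $c_1 = c_2 = b$ with $k = l = 2$, the sign $(-1)^{(k-1)(l-1)} = -1$ gives $\llbracket b, b \rrbracket = -2\, b \circ b$. I would then expand $b \circ b$ using the three $(2,1)$-shuffles in $S_{2,1}$, obtaining
\[
(b \circ b)(\alpha_1, \alpha_2, \alpha_3) = b(b(\alpha_1, \alpha_2), \alpha_3) - b(b(\alpha_1, \alpha_3), \alpha_2) + b(b(\alpha_2, \alpha_3), \alpha_1),
\]
and rewrite the middle term using skew-symmetry, so that this becomes exactly the Jacobiator $[[\alpha_1,\alpha_2],\alpha_3] + [[\alpha_2,\alpha_3],\alpha_1] + [[\alpha_3,\alpha_1],\alpha_2]$. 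Hence $\llbracket b, b \rrbracket = 0$ if and only if the Jacobi identity holds, completing the bijection.

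Two small points deserve attention. First, the correspondence in the statement should be read as between Lie algebroid structures (bracket plus anchor) and Maurer-Cartan elements; here the anchor is recovered as the symbol $\sigma_b$, which is uniquely determined by $b$ through the Leibniz rule, and the compatibility $\rho([\alpha,\beta]) = [\rho(\alpha), \rho(\beta)]$ need not be imposed separately, as it is a standard consequence of the Leibniz rule and Jacobi. Second, I expect the only delicate step to be the sign bookkeeping: correctly enumerating the shuffles in $S_{2,1}$, tracking their signs, and matching the result to the conventional form of the Jacobiator after applying skew-symmetry. Everything else is a direct unwinding of the definitions.
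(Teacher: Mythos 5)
Your proposal is correct and follows the same route the paper takes: the paper simply records, in the paragraph preceding the proposition, that the bracket of a Lie algebroid is a biderivation carrying all the structure (with the anchor recovered as its symbol) and that the Jacobi identity is equivalent to $\llbracket b, b \rrbracket = 0$. Your explicit computation of $\llbracket b,b\rrbracket = -2\,b\circ b$ via the three $(2,1)$-unshuffles, and its identification with the Jacobiator, correctly fills in the details the paper leaves implicit.
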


Now, fix a Lie algebroid structure $A \Rightarrow M$ on the vector bundle $A \to M$, and let $b_A$ be the Lie bracket on sections of $A$. Equipped with the Gerstenhaber bracket and the interior derivation $\delta := \llbracket b_A, - \rrbracket$, $\mathfrak D^\bullet (A)[1]$ is a differential graded Lie algebra (DGLA), denoted $C^\bullet_{\mathrm{def}} (A)$ (or $C^\bullet_{\mathrm{def}} (A, M)$ if we want to insist on the base manifold being $M$) and called the \emph{deformation complex of $A$}. The cohomology of $C^\bullet_{\mathrm{def}} (A)$ is denoted $H^\bullet_{\mathrm{def}} (A)$ (or $H^\bullet_{\mathrm{def}} (A, M)$), and called the \emph{deformation cohomology} of $A$.

\begin{remark} Notice that we adopt a different convention from \cite{crainic:def}, where $C^k_{\mathrm{def}}(A)$ is the space of $k$-derivations. With that convention, however, $C^{\bullet}_{\mathrm{def}}(A)$ is a DGLA only up to a shift. \end{remark}

The differential $\delta: C^{\bullet}_{\mathrm{def}}(A) \to C^{\bullet+1}_{\mathrm{def}}(A)$ is given, on $k$-derivations, by
\begin{equation}\label{eq:diff}
\begin{aligned}
\delta c (\alpha_1, \dots, \alpha_{k+1}) = & \sum_{i} (-1)^{i+1} [\alpha_i, c(\alpha_1, \dots, \widehat{\alpha_i}, \dots, \alpha_{k+1})] \\
& + \sum_{i<j} (-1)^{i+j} c([\alpha_i, \alpha_j], \alpha_1, \dots, \widehat{\alpha_i}, \dots, \widehat{\alpha_j}, \dots, \alpha_{k+1}).
\end{aligned}
\end{equation}

\begin{definition} A  \emph{deformation} of $b_A$ is a(n other) Lie algebroid structure on the vector bundle $A \to M$. \end{definition}

It is clear that $b = b_A + c$ satisfies $\llbracket b, b \rrbracket = 0$ if and only if 
\[
\delta c + \dfrac{1}{2} \llbracket c, c \rrbracket = 0,	
\]
i.e.~ {$c$ is a (degree $1$) solution of the  \emph{Maurer-Cartan equation} in the DGLA $C^{\bullet}_{\mathrm{def}}(A)$}. Hence Proposition \ref{prop:def_MC} can be rephrased saying that  \emph{deformations of $b_A$ are in one-to-one correspondence with Maurer-Cartan elements of $C^{\bullet}_{\mathrm{def}}(A)$}.

Now, let $b_0, b_1$ be deformations of $b_A$. We say that $b_0$ and $b_1$ are  \emph{equivalent} if there exists a  \emph{fiber-wise linear} isotopy taking $b_0$ to $b_1$, i.e. there is a smooth path of vector bundle automorphisms $\phi_t: A \to A$, $t \in [0,1]$, such that $\phi_0 = \operatorname{id}_A$ and $\phi_1^\ast b_1 = b_0$. On the other hand, two Maurer-Cartan elements $c_0, c_1$ are \emph{gauge-equivalent} if they are interpolated by a smooth path of $1$-cochains $c_t$, and $c_t$ is a solution of the following ODE: 
\begin{equation}\label{eq:c_t}
\frac{dc_t}{dt} = \delta \Delta_t + \llbracket c_t, \Delta_t \rrbracket,
\end{equation}
for some smooth path of $0$-cochains (i.e. derivations) $\Delta_t$, $t \in [0,1]$. 

Notice that (\ref{eq:c_t}) is equivalent to
\begin{equation}\label{eq:b_t}
\frac{db_t}{dt} = \llbracket b_t, \Delta_t \rrbracket
\end{equation}
where $b_t = b_A + c_t$.

\begin{proposition}\label{prop:eq_G_eq}
Let $b_0 = b_A + c_0, b_1 = b_A + c_1$ be deformations of $b_A$. If $b_0, b_1$ are equivalent, then $c_0, c_1$ are gauge-equivalent. If $M$ is compact, the converse is also true.
\end{proposition}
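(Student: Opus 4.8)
The plan is to reduce both notions of equivalence to the single ODE (\ref{eq:b_t}) for the path of brackets $b_t = b_A + c_t$, and to exploit the fact that $\mathfrak{D}(A) = C^0_{\mathrm{def}}(A)$ is the ``Lie algebra'' of the group of vector bundle automorphisms of $A$. A smooth path $\phi_t$ of automorphisms with $\phi_0 = \operatorname{id}_A$ has a time-dependent infinitesimal generator which is a derivation $\Delta_t \in \mathfrak{D}(A)$, and the infinitesimal version of the pull-back action on multiderivations is the adjoint action $\llbracket \Delta_t, - \rrbracket$ of the Gerstenhaber bracket. Concretely, I would first establish the differentiation formula
\begin{equation*}
\frac{d}{dt} \phi_t^\ast c = \phi_t^\ast \llbracket \Delta_t, c \rrbracket, \qquad c \in \mathfrak{D}^\bullet(A),
\end{equation*}
where $\Delta_t$ is the derivation defined by $\Delta_t \alpha := \tfrac{d}{ds}\big|_{s=t}(\phi_s \circ \phi_t^{-1})^\ast \alpha$, its symbol being the time-dependent vector field generating the base diffeomorphisms $\phi_{M,t}$. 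This follows by differentiating the defining identities for $\phi_t^\ast$ and checking that $\Delta_t$ obeys the Leibniz rule; that $\tfrac{d}{dt}\phi_t^\ast$ is implemented by the Gerstenhaber bracket is the infinitesimal counterpart of the compatibility (\ref{eq:pb_Gerst}).

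Granting this, the first implication is immediate. Given an equivalence $\phi_t$ with $\phi_1^\ast b_1 = b_0$, set $b_t := (\phi_t^{-1})^\ast b_0$, so that $b_t$ interpolates $b_0$ and $b_1$ and each $b_t$ is again a Lie bracket, since $\phi_t^\ast$ preserves Maurer--Cartan elements by (\ref{eq:pb_Gerst}). Differentiating and using the formula above yields $\tfrac{d}{dt} b_t = \llbracket b_t, \tilde{\Delta}_t\rrbracket$ for a suitable time-dependent derivation $\tilde\Delta_t$. This is exactly (\ref{eq:b_t}), hence $c_t = b_t - b_A$ solves (\ref{eq:c_t}) and $c_0, c_1$ are gauge-equivalent. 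Note that no compactness is needed here, since the path of automorphisms is given to us.

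For the converse I would integrate rather than differentiate. Starting from a gauge equivalence, i.e. a path $b_t$ solving (\ref{eq:b_t}) for some derivations $\Delta_t$, I view each $\Delta_t$ as a fiber-wise linear (time-dependent) vector field on the total space of $A$ and integrate its flow to a path $\psi_t$ of vector bundle automorphisms with $\psi_0 = \operatorname{id}_A$, arranged (by solving the corresponding Lie-type equation for the generator $(\psi_t^\ast)^{-1}\Delta_t$) so that $a_t := \psi_t^\ast b_0$ satisfies the same ODE (\ref{eq:b_t}) as $b_t$ with the same initial value $a_0 = b_0$. Since (\ref{eq:b_t}) is linear in the unknown bracket, uniqueness of solutions forces $a_t = b_t$ for all $t$; in particular $b_1 = \psi_1^\ast b_0$, and then $\phi_t := \psi_t^{-1}$ is a fiber-wise linear isotopy with $\phi_0 = \operatorname{id}_A$ and $\phi_1^\ast b_1 = b_0$, witnessing the equivalence of $b_0$ and $b_1$.

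The one genuinely non-formal step --- and the reason compactness is assumed --- is the integration of the time-dependent derivation $\Delta_t$ to a flow $\psi_t$ defined on all of $[0,1]$. The symbol $\sigma_{\Delta_t}$ is a time-dependent vector field $X_t$ on $M$; the linear vector field on the total space of $A$ covering it is complete precisely when the base flow of $X_t$ is complete, and the fiber-wise linear part then integrates for the same time interval. Compactness of $M$ guarantees that $X_t$ generates a flow of diffeomorphisms $\psi_{M,t}$ for all $t \in [0,1]$, hence that $\psi_t$ exists up to $t=1$. Without this, the flow might only be defined for small time or over a proper open subset of $M$, so that the gauge equivalence need not come from an honest isotopy; this asymmetry is exactly why the first implication is unconditional while the converse requires $M$ compact.
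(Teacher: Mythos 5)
Your proof is correct and follows essentially the same route as the paper's: in the forward direction you differentiate the pull-back along the isotopy (the paper does this computation directly on $b_t(\alpha,\beta)$ rather than via the general formula $\tfrac{d}{dt}\phi_t^\ast c = \phi_t^\ast\llbracket \Delta_t, c\rrbracket$, but it is the same calculation), and in the converse you use compactness to integrate the symbol of $\Delta_t$ to a complete flow of vector bundle automorphisms and then identify $\psi_t^\ast b_0$ with $b_t$ by uniqueness for the ODE, exactly as the paper compares $\phi_t^\ast(b_t(\alpha,\beta))$ with $b_0(\phi_t^\ast\alpha,\phi_t^\ast\beta)$. The only cosmetic difference is your explicit bookkeeping of the conjugated generator $(\psi_t^\ast)^{-1}\Delta_t$, which the paper sweeps into its implicit definition of the flow via (\ref{eq:xi_t}).
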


\begin{proof}

Suppose that $b_0$ and $b_1$ are equivalent deformations, and let $\phi_t : A \to A$ be a fiber-wise linear isotopy taking $b_0$ to $b_1$. Set $b_t = \phi_t^{-1}{}^* b_0 = b_A + c_t$, and let $\Delta_t$ be the infinitesimal generator of $\phi_t$, i.e.
\begin{equation}\label{eq:xi_t}
\dfrac{d \phi_t^*}{dt} = \phi_t^* \circ \Delta_t.
\end{equation}

Notice that 
\[
\llbracket b_t, b_t \rrbracket = \llbracket \phi_t^{-1}{}^* b_0, \phi_t^{-1}{}^* b_0 \rrbracket = \phi_t^{-1}{}^* \llbracket b_0, b_0 \rrbracket = 0,
\]
 so $b_t$ is a deformation of $b_A$ for all $t$. Moreover, we have $\phi_t^*(b_t(\alpha, \beta)) = b_0(\phi_t^* \alpha, \phi_t^* \beta)$ for all $\alpha, \beta \in \Gamma(A)$. Differentiating with respect to $t$, we obtain:
\[
\begin{aligned}
\phi_t^* \left( \Delta_t(b_t(\alpha,\beta)) + \dfrac{db_t}{dt}(\alpha,\beta) \right) & = b_0\left(\phi_t^* (\Delta_t (\alpha)), \phi_t^*\beta \right) + b_0\left(\phi_t^* \alpha, \phi_t^*(\Delta_t(\beta))\right) \\ 
& =  \phi_t^* \left(b_t(\Delta_t(\alpha),\beta) + b_t(\alpha, \Delta_t(\beta))\right),
\end{aligned}
\]
so
\[
\dfrac{db_t}{dt}(\alpha,\beta) = b_t(\Delta_t(\alpha), \beta) + b_t(\alpha, \Delta_t(\beta)) - \Delta_t(b_t(\alpha,\beta)),
\]
i.e.~(\ref{eq:b_t}), hence (\ref{eq:c_t}), holds, as desired.

Conversely, suppose that $M$ is compact and there exist a family of derivations $\Delta_t$ and a family of $1$-cochains $b_t$ such that (\ref{eq:c_t}) or, equivalently, (\ref{eq:b_t}) holds. Let $X_t$ be the symbol of $\Delta_t$. From compactness, $X_t$ is a complete time-dependent vector field on $M$, i.e.~it generates a complete flow $(\phi_M)_t$. The time dependent derivation $\Delta_t$ generates a flow by vector bundle automorphisms $\phi_t : A \to A$, covering the complete flow $(\phi_M)_t$ (and implicitly defined by the ODE (\ref{eq:xi_t})).  By linearity, $\phi_t$ is a complete flow itself. We want to show that
\begin{equation}\label{eq:Xi_t}
\phi_t^*(b_t(\alpha, \beta)) = b_0(\phi_t^* \alpha, \phi_t^* \beta) \quad \alpha,\beta \in \Gamma (A).
\end{equation}
For $t = 0$ this is obviously true and the derivatives of both sides are the same because of (\ref{eq:b_t}). So we have (\ref{eq:Xi_t}), and, by taking $t = 1$, we conclude that $\phi_t$ is a (fiber-wise linear) isotopy taking $b_0$ to $b_1$.
\end{proof}

\begin{remark}
An  \emph{infinitesimal deformation} of a Lie algebroid $A \Rightarrow M$ is an element $c \in C^1_{\mathrm{def}}(A)$ such that $\delta c = 0$, i.e.~a 1-cocycle in $C^{\bullet}_{\mathrm{def}}(A)$.  {As usual in deformation theory}, this definition is motivated by the fact that, \emph{if $c_t$ is a smooth path of Maurer-Cartan elements starting at $0$, then $\frac{dc_t}{dt}|_{t=0}$ is an infinitesimal deformation of $A$}.   More generally, the cocycle condition $\delta c = 0$ is just the linearization at $c = 0$ of the Maurer-Cartan equation. Hence, $1$-cocycles in $C^\bullet_{\mathrm{def}} (A)$ can be seen as the (formal) tangent vectors to the variety of Maurer-Cartan elements. Similarly, $1$-coboundaries can be seen as tangent vectors to the gauge orbit through $0$. We conclude that $H^1_{\mathrm{def}}(A)$ is the \emph{formal tangent space} to the moduli space of deformations under gauge equivalence. 
\end{remark}

\begin{remark} 
The deformation complex of a Lie algebroid has an efficient description in terms of graded geometry. In fact, graded geometry becomes very useful when dealing with several issues related to VB-algebroids.

Let $A \Rightarrow M$ be a Lie algebroid and let $(\Omega^{\bullet}_A = \Gamma(\wedge^{\bullet} A^*), d_A)$ be its de Rham complex (sometimes we will use $\Omega^{\bullet}_{A, M}$ for $A$-forms, if we want to insist on $M$ being the base manifold). Cochains in $\Omega^{\bullet}_A$ can be seen as functions on the DG-manifold $A[1]$ obtained from $A$ shifting by one the fiber degree. The $Q$-structure on $A[1]$ is simply $d_A$. Additionally, there is a canonical isomorphism $C^{\bullet}_{\mathrm{def}}(A) \cong \mathfrak{X}(A[1])^{\bullet}$ of DGLAs, where $\mathfrak X (A[1])^{\bullet}$ is the space of vector fields on the DG-manifold $A[1]$ (in other words, $\mathfrak X (A[1])^{\bullet}$ is the space of (graded) derivations of $\Omega^\bullet_A$). With the graded commutator and the adjoint operator $[d_A, -]$, $\mathfrak X (A[1])^\bullet$ is indeed a DGLA. Isomorphism $C^{\bullet}_{\mathrm{def}}(A) \to \mathfrak{X}(A[1])^{\bullet}$, $c \mapsto \delta_c$ can be described explicitly as follows. Let $c \in C^k (A)$ and let $\sigma_c$ be the symbol of $c$. Then $\delta_c \in \mathfrak{X}(A[1])^{\bullet}$  {is the degree $k$ vector field that takes} $\omega \in \Omega^p_A$, to $\delta_c \omega \in \Omega^{k + p}_A$ with
\begin{equation}\label{eq:def_der}
\begin{aligned}
\delta_c \omega (\alpha_1, \dots, \alpha_{k+p}) = & \sum_{\tau \in S_{k,p}} (-1)^{\tau} \sigma_c (\alpha_{\tau(1)}, \dots, \alpha_{\tau(k)}) \omega (\alpha_{\tau(k+1)}, \dots, \alpha_{\tau(k+p)}) \\
& - \sum_{\tau \in S_{k+1,p-1}} (-1)^{\tau} \omega (c(\alpha_{\tau(1)}, \dots, \alpha_{\tau(k+1)}), \alpha_{\tau(k+2)}, \dots, \alpha_{\tau(k+p)})
\end{aligned}
\end{equation}
where $S_{l,m}$ denotes $(l,m)$-unshuffles.
Notice that $c$ can be reconstructed from $\delta_c$ by using formula (\ref{eq:def_der}) for $p = 0, 1$:
\begin{equation}\label{eq:delta_c_f}
 \delta_c f (\alpha_1, \dots, \alpha_k)  = \sigma_c(\alpha_1, \dots, \alpha_{k})f,
 \end{equation}
 and
 \begin{equation}\label{eq:delta_c_phi}
\delta_c \varphi (\alpha_1, \dots, \alpha_{k+1})  = \sum_{i} (-1)^{k-i} \sigma_c (\alpha_1, \dots, \widehat{\alpha_i}, \dots, \alpha_{k+1})\langle \varphi, \alpha_i \rangle + \langle \varphi, c(\alpha_1, \dots, \alpha_{k+1}) \rangle,
\end{equation}
where $f \in C^{\infty}(M)$, $\varphi \in \Omega^1_{A} = \Gamma (A^\ast)$, and $\alpha_1, \dots, \alpha_{k+1} \in \Gamma(A)$.
\end{remark}
 

\subsection{Double vector bundles and VB-algebroids}\label{Sec:deformations2}

In this section we recall the basic definitions and properties of double vector bundles and VB-algebroids that will be useful later. For all the necessary details about the homogeneity structure of a vector bundle, including our notations, we refer to Appendix \ref{Sec:homogeneity}, and we recommend the reader to read the appendix before going on with the bulk of the paper. We only recall here that, given a vector bundle $E \to M$, the \emph{homogeneity structure} of $E$ is the action $h : \mathbb R_{\geq 0} \times E \to E$, $(\lambda , e) \mapsto h_\lambda e := \lambda \cdot e$, of non-negative reals on $E$ by homotheties (fiber-wise multiplication by scalars).

\begin{definition} A  \emph{double vector bundle} (DVB for short) is a vector bundle in the category of vector bundles.  More precisely, it is a commutative square
\begin{equation}\label{eq:DVB}
\begin{array}{c}
\xymatrix{
W \ar[d]_{q_W} \ar[r]^{\tilde p} & E \ar[d]^{q} \\
A \ar[r]^{p} & M}
\end{array},
\end{equation}
where all four sides are vector bundles, the projection $q_W: W \to A$, the addition $+_A: W \times_A W \to W$, the multiplication $\lambda\, \cdot_A : W \to W$ by any scalar $\lambda \in \mathbb R$ in the fibers of $W \to A$ and the zero section $\tilde{0}^A: A \to W$ are vector bundle maps covering the projection $q: E \to M$, the addition $+ : E \times_M E \to E$, the scalar multiplication $\lambda \cdot {}: E \to E$ and the zero section $0^E: M \to E$, respectively. The projection, the addition, the scalar multiplication and the zero section of a vector bundle will be called the \emph{structure maps}. DVB (\ref{eq:DVB}) will be also denoted by $(W \to E; A \to M)$. \end{definition}

Notice that $W$ is a vector bundle over $E$ and over $A$, so it carries two homogeneity structures. However, we will mainly use the latter and denote it simply by $h$. 
For more many details on DVBs we refer to \cite{mackenzie} and \cite{gracia-saz:vb}.

Let $(W \to E; A \to M)$ be a DVB. The manifold $W$ will be called the \emph{total space}.
Consider the submanifold
\[
C := \operatorname{ker}(W \to E) \cap \operatorname{ker}(W \to A) \subset W.
\]
In other words, elements of $C$ are those projecting simultaneously on the (images of the) zero sections of $A$ and $E$ (which are both diffeomorphic to $M$). The fiber-wise operations of the vector bundles $W \to E$ and $W \to A$ coincide on $C$ (see \cite{mackenzie}), so they define a (unique) vector bundle structure on $C$ over $M$. The vector bundle $C \to M$ is called the \emph{core} of $(W \to E; A \to M)$.

In the following, we denote by $\Gamma (W, E)$ the space of sections of $W \to E$. Sections of $C \to M$ can be naturally embedded into $\Gamma(W,E)$, via the map $\Gamma(C) \to \Gamma(W,E)$, $\chi \mapsto \overline \chi$, defined by: 

\begin{equation}\label{eq:core_incl}
\overline \chi_e = \tilde{0}{}^E_e +_A \chi_{q(e)}, \quad e \in E.
\end{equation}
The image of the inclusion $\chi \mapsto \overline \chi$ is, by definition, the space $\Gamma_{\mathrm{core}}(W,E)$ of \emph{core sections} of $W \to E$. 

There is another relevant class of sections of $W \to E$: \emph{linear sections}. We say that a section of $W \to E$ is a  \emph{linear section} if it is a vector bundle map covering some section of $A \to M$. The space of linear sections of $W \to E$ is denoted $\Gamma_{\mathrm{lin}}(W,E)$.  
We will usually denote by $\tilde \alpha, \tilde \beta, \ldots$ the sections in $\Gamma_{\mathrm{lin}}(W,E)$. The $C^\infty (E)$-module $\Gamma (W, E)$ is spanned by $\Gamma_{\mathrm{core}}(W,E)$ and $\Gamma_{\mathrm{lin}}(W,E)$.

Linear and core sections of $W \to E$ can be efficiently characterized using the homogeneity structure $h$. Namely, the following lemma holds.

\begin{lemma}\label{prop:char_lin}
A section $w \in \Gamma(W,E)$ is
\begin{enumerate}
\item linear if and only if $h_\lambda^* w = w$ for every $\lambda > 0$;
\item core if and only if $h_\lambda^* w = \lambda^{-1} w$ for every $\lambda > 0$.
\end{enumerate}
\end{lemma}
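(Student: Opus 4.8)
The plan is to reduce everything to one explicit formula for the pulled-back section and then read off the two equivalences. Recall that for $\lambda > 0$ the homothety $h_\lambda = \lambda\cdot_A$ is a vector bundle automorphism of $W\to E$ covering the homothety $e\mapsto\lambda\cdot e$ of $E\to M$; writing $m_\lambda\colon E\to E$ for the latter and using the pull-back convention $\phi^* w=\phi^{-1}\circ w\circ\phi_M$ from Subsection~\ref{Sec:deformations1}, I would first establish
\[
(h_\lambda^* w)_e = h_\lambda^{-1}\bigl(w_{\lambda\cdot e}\bigr)=\lambda^{-1}\cdot_A w_{\lambda\cdot e},\qquad e\in E,\ \lambda>0.
\]
Both assertions then become statements about this single formula.

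For part (1), the forward implication is immediate: a linear section is by definition a morphism of vector bundles from $E\to M$ to $W\to A$, so it intertwines the two scalar multiplications, $w_{\lambda\cdot e}=\lambda\cdot_A w_e$, and the displayed formula gives $(h_\lambda^* w)_e=\lambda^{-1}\cdot_A(\lambda\cdot_A w_e)=w_e$. For the converse, I read the invariance $h_\lambda^* w=w$ as $w\circ m_\lambda=h_\lambda\circ w$, i.e.~as the statement that the smooth map $w\colon E\to W$ intertwines the homogeneity structure of $E\to M$ with the homogeneity structure $h$ of $W\to A$. By the characterization of vector bundle morphisms as exactly the maps intertwining homogeneity structures (the Grabowski--Rotkiewicz result recalled in Appendix~\ref{Sec:homogeneity}), $w$ is then a morphism $E\to W$ of vector bundles over $W\to A$. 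It remains to check that $w$ covers a \emph{section} of $A\to M$: if $w$ covers $\bar w\colon M\to A$, then chasing the defining square~\eqref{eq:DVB} gives $p\circ\bar w\circ q=p\circ q_W\circ w=q\circ\tilde p\circ w=q$, and surjectivity of $q$ yields $p\circ\bar w=\operatorname{id}_M$; hence $w\in\Gamma_{\mathrm{lin}}(W,E)$.

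For part (2), the forward implication is a direct computation on a core section $\overline\chi$. Using that $q(\lambda\cdot e)=q(e)=:m$, that the zero section $\tilde 0^E\colon E\to W$ and the scalar multiplication $\cdot_E$ in the fibres of $W\to E$ are both morphisms for $W\to A$ (a standard consequence of the DVB axioms), and that the two fibre-wise scalar multiplications of $W$ coincide on the core $C$, one finds
\[
(h_\lambda^*\overline\chi)_e=\lambda^{-1}\cdot_A\bigl(\tilde 0^E_{\lambda\cdot e}+_A\chi_m\bigr)=\tilde 0^E_e+_A\bigl(\lambda^{-1}\chi_m\bigr)=\lambda^{-1}\cdot_E\overline\chi_e=(\lambda^{-1}\overline\chi)_e.
\]
For the converse I would use the fact, quoted just before the lemma, that $\Gamma(W,E)$ is spanned over $C^\infty(E)$ by linear and core sections, together with the weight bookkeeping of the homogeneity structure: a product $F\cdot s$ with $F\in C^\infty(E)$ satisfies $h_\lambda^*(Fs)=(m_\lambda^*F)\cdot(h_\lambda^* s)$, the functions on $E$ are non-negatively graded by fibre-degree, and linear resp.~core sections have weight $0$ resp.~$-1$ by part (1) and the forward part of (2). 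Hence the only way to realize weight $-1$ is a fibre-degree-$0$ (i.e.~basic) coefficient times a core section, and since $\{g\overline\chi:g\in C^\infty(M)\}=\Gamma_{\mathrm{core}}(W,E)$, the condition $h_\lambda^* w=\lambda^{-1}w$ forces $w$ to be core.

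The main obstacle lies in the two converse directions, where one must pass from an abstract scaling/eigenvector condition back to the concrete linear--core structure; for (1) this is precisely the homogeneity-structure characterization of vector bundle morphisms, and for (2) it is the weight decomposition of $\Gamma(W,E)$, both supplied by Appendix~\ref{Sec:homogeneity}. As an alternative, and a useful sanity check, both converses can be verified at once in adapted DVB coordinates $(x^i,\alpha^a,e^\mu,w^m)$, in which a section of $W\to E$ is a pair of component functions $(\alpha^a(x,e),w^m(x,e))$ and the pull-back computation above reduces the conditions $h_\lambda^* w=w$ and $h_\lambda^* w=\lambda^{-1}w$ to the homogeneity (respectively vanishing) of these components in the variable $e$.
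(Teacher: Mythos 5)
The paper states this lemma without proof (it is treated as a known fact, cf.\ the references to \cite{gracia-saz:vb} and \cite{etv:infinitesimal}), so there is no in-text argument to compare against; your proposal supplies a proof, and it is essentially correct. The reduction to the single formula $(h_\lambda^\ast w)_e=\lambda^{-1}\cdot_A w_{\lambda\cdot e}$ is the right move, the forward directions and the check that $w$ covers a \emph{section} of $A\to M$ are fine, and the converse of (2) is sound once one notes that a local frame of $A$ and of $C$ induces a local frame of $W\to E$ by linear and core sections, so that the weight bookkeeping can be applied to uniquely determined coefficient functions (your coordinate remark at the end is exactly the rigorous version of this, and I would promote it from ``sanity check'' to the actual argument). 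The only point needing an explicit extra line is in the converse of (1): the Grabowski--Rotkiewicz characterization recalled in Appendix \ref{Sec:homogeneity} concerns the action of the full monoid $\mathbb R_{\geq 0}$, and the value $\lambda=0$ is where that theorem does its real work, whereas your hypothesis only gives $w\circ m_\lambda=h_\lambda\circ w$ for $\lambda>0$; you should observe that both sides are continuous in $\lambda$ on $[0,\infty)$, so the identity extends to $\lambda=0$ by continuity before invoking the characterization. With that one sentence added, the proof is complete.
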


More generally, we say that a section $w$ of $W \to E$ is \emph{of weight $q$} if $h_\lambda^* w = \lambda^q w$ for every $\lambda > 0$. Using this terminology, linear sections are precisely sections of weight $0$ and core sections are sections of weight $-1$. It is easy to check that \emph{there are no non-zero sections of $W \to E$ of weight less than $-1$.}


\begin{remark}
Let $(W \to E;A \to M)$ be a DVB, let $C$ be its core and let $W^*_A \to A$ be the dual vector bundle of $W \to A$. Then
\[
\xymatrix{
W^*_A \ar[d] \ar[r] & C^* \ar[d] \\
A \ar[r] & M}
\]
is a DVB, called the  \emph{dual} of $W$ over $A$, whose core is $E^*$. We refer to \cite{mackenzie} for the structure maps of the dual DVB.
\end{remark}

\begin{examplex} A distinguished example of a DVB is the  \emph{tangent double of a vector bundle}. If $E \to M$ is a vector bundle, then
\[
\xymatrix{
TE \ar[d] \ar[r] & E \ar[d] \\
TM \ar[r] & M}
\]
is a DVB with core canonically isomorphic to $E$. It is easy to see that linear sections of $TE \to E$ are precisely linear vector fields (see the appendix). Moreover, the inclusion $\Gamma (E) \to \Gamma_{\mathrm{core}}(TE \to E)$ is the classical \emph{vertical lift}, identifying a section of $E$ with a fiber-wise constant vertical vector field on $E$ itself. We will also call \emph{core vector fields} the core sections of $TE \to E$.

The dual of $TE$ over $E$ is
\[
\begin{array}{c}
\xymatrix{
T^*E \ar[d] \ar[r] & E \ar[d] \\
E^\ast \ar[r] & M}
\end{array}.
\]
\end{examplex}

We now pass to VB-algebroids.

\begin{definition}\label{def:vb-alg}
A \emph{VB-algebroid} is a DVB as in (\ref{eq:DVB}), equipped with a Lie algebroid structure $W \Rightarrow E$ such that the anchor $\rho_W: W \to TE$ is a vector bundle map covering a vector bundle map $\rho_A: A \to TM$ and the Lie bracket $[-,-]_W$ on sections of $W \to E$ satisfies
\begin{equation}\label{eq:VB-alg}
\begin{aligned}
{}[\Gamma_{\mathrm{lin}}(W,E), \Gamma_{\mathrm{lin}}(W,E)]_W & \subset \Gamma_{\mathrm{lin}}(W,E), \\
[\Gamma_{\mathrm{lin}}(W,E), \Gamma_{\mathrm{core}}(W,E)]_W & \subset \Gamma_{\mathrm{core}}(W,E), \\
[\Gamma_{\mathrm{core}}(W,E), \Gamma_{\mathrm{core}}(W,E)]_W & = 0.
\end{aligned}
\end{equation}
\end{definition}

Notice that, using the grading defined above, Property (\ref{eq:VB-alg}) is equivalent to asking that the Lie bracket on $\Gamma (W, E)$ is of weight 0. This can be made very precise using the action of vector bundle automorphisms on multiderivations (see below).

\begin{remark}
Let $(W \Rightarrow E; A \Rightarrow M)$ be a VB-algebroid with core $C$, and let $(W^\ast \to C^\ast; A \to M)$ be its dual DVB. One can show that there is a canonical  VB-algebroid structure $(W^*_A \Rightarrow C^\ast; A \Rightarrow M)$ on the latter, called the \emph{dual VB-algebroid}. The dual VB-algebroid will appear only marginally in the sequel, so we do not discuss the details of this construction. For more information, see \cite{mackenzie:ehresmann}.
\end{remark}



\subsubsection{Graded geometric description}\label{subsec:alt_desc}
 
There is a very useful description of VB-algebroids in terms of graded geometry. We begin discussing \emph{linear vector fields} on (the total space of) a vector bundle $\mathcal E \to \mathcal M$ of graded manifolds. First we fix our notation. As already mentioned, a section $\phi$ of the dual bundle $\mathcal E^\ast \to \mathcal M$ determines a fiber-wise linear function  $\ell_\phi$ on $\mathcal E$. As in the non-graded case, a section $\varepsilon$ of $\mathcal E$ itself determines a \emph{fiber-wise constant} vector field $\varepsilon^\uparrow \in \mathfrak X (\mathcal E)^{\bullet}$, its \emph{vertical lift}, uniquely defined by
\[
\varepsilon^\uparrow (\ell_\phi) := (-)^{|\varepsilon||\phi|} \langle \phi, \varepsilon \rangle.
\] 
We denote by $\mathfrak X_{\mathrm{core}} (\mathcal E)^{\bullet}$ the space of core vector fields, i.e.~fiber-wise constant vertical vector fields on $\mathcal E$. Correspondence $\varepsilon \mapsto \varepsilon^\uparrow$ establishes a graded $C^\infty (\mathcal M)^\bullet$-module isomorphism $ \Gamma (\mathcal E)^\bullet \cong \mathfrak X_{\mathrm{core}} (\mathcal E)^{\bullet}$. Now let $X \in \mathfrak X (\mathcal E)^{\bullet}$. Then $X$ is \emph{linear} if it preserves fiber-wise linear functions. Equivalently, $X$ is linear if the (graded) commutator $[X, -]$ preserves fiber-wise constant vector fields. We denote by $\mathfrak X_{\mathrm{lin}}(\mathcal E)^{\bullet}$ the space of linear vector fields on $\mathcal E$. Notice that linear vector fields do also preserve fiber-wise constant functions. Finally, similarly as in the nongraded case, denote by $\mathfrak D(\mathcal E)^{\bullet} $ the space of  graded derivations of $\mathcal E$. There is a canonical isomorphism of graded Lie algebras and graded $C^\infty (\mathcal M)^\bullet$-modules  $\mathfrak X_{\mathrm{lin}}(\mathcal E)^{\bullet} \to \mathfrak D(\mathcal E)^{\bullet} $, $X \mapsto D_X$, implicitly defined by $(D_X \varepsilon)^\uparrow = [X, \varepsilon^\uparrow]$, for all $\varepsilon \in \Gamma (\mathcal E)^\bullet$.

Now, we have already recalled that Lie algebroids are equivalent to DG-manifolds concentrated in degree $0$ and $1$. For VB-algebroids we have an analogous result \cite{voronov:q} that we now briefly explain. Recall that a DG-vector bundle is a vector bundle of graded manifolds $\mathcal E \to \mathcal M$ such that $\mathcal E$ and $\mathcal M$ are both DG-manifolds, with homological vector fields $Q_{\mathcal E}$ and $Q_{\mathcal M}$, respectively, and, additionally, $Q_{\mathcal E}$ is linear, and projects onto $Q_{\mathcal M}$. Equivalently $\mathcal E \to \mathcal M$ is a vector bundle of graded manifolds, $\mathcal M$ is a DG-manifold, with homological vector field $Q_{\mathcal M}$,  $\mathcal E$ is equipped with a homological derivation $D_{\mathcal E}$, i.e.~a degree $1$ derivation such that $[D_{\mathcal E}, D_{\mathcal E}] = 0$, and, additionally, the symbol of $D_{\mathcal E}$ is precisely $Q_{\mathcal M}$. For more details about DG-vector bundles see, e.g.~\cite{Vit:vv-forms}.

Finally, let $(W \to E; A \to M)$ be a DVB. If we shift the degree in the fibers of both $W \to E$ and $A \to M$ (and use the functoriality of the shift) we get a vector bundle of graded manifolds, denoted $W[1]_E \to A[1]$. If $(W \Rightarrow E; A \Rightarrow M)$ is a VB-algebroid, then $W[1]_E \to A[1]$ is a DG-vector bundle concentrated in degree $0$ and $1$.
 
\begin{theorem} [see \cite{voronov:q}] \label{prop:grad_VB}
Correspondence $(W \Rightarrow E; A \Rightarrow M)  \rightsquigarrow (W[1]_E \to A[1])$ establishes an equivalence between the category of VB-algebroids and the category of DG-vector bundles concentrated in degree 0 and 1.
\end{theorem}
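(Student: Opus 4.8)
The plan is to build on Vaintrob's equivalence $\mathcal V$ between Lie algebroids and DG-manifolds concentrated in degree $0$ and $1$, which sends $A \Rightarrow M$ to the DG-manifold $A[1]$ with homological vector field $d_A$. Applying $\mathcal V$ to the top Lie algebroid $W \Rightarrow E$ produces the DG-manifold $W[1]_E$ whose homological vector field is the de Rham differential $d_W$, and the content of the theorem is to single out, among all such homological vector fields, those compatible with the transversal vector bundle structure $W[1]_E \to A[1]$ induced by $q_W \colon W \to A$. Concretely, I would show that $d_W$ endows $W[1]_E \to A[1]$ with the structure of a DG-vector bundle if and only if the bracket $[-,-]_W$ satisfies the VB-conditions \eqref{eq:VB-alg}, and that this correspondence is functorial.

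The heart of the proof is this translation, which I would carry out via the homogeneity structure $h$ of $W \to A$. Since $W[1]_E \to A[1]$ is obtained from $W \to A$ by shifting, its homogeneity structure is induced by $h$, and a vector field on $W[1]_E$ is linear (equivalently, defines a derivation $D_{\mathcal E}$ of $W[1]_E \to A[1]$) precisely when it has weight $0$, i.e.\ commutes with $h_\lambda^*$ for all $\lambda > 0$. Expressing $d_W$ through the anchor $\rho_W$ and the bracket $[-,-]_W$ and invoking Lemma \ref{prop:char_lin}, which characterizes linear sections of $W \to E$ as those of weight $0$ and core sections as those of weight $-1$, I would check that $d_W$ has weight $0$ exactly when $\rho_W$ covers $\rho_A$ and the bracket is of weight $0$; by the observation following Definition \ref{def:vb-alg}, the latter is equivalent to \eqref{eq:VB-alg}. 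The condition $[D_{\mathcal E}, D_{\mathcal E}] = 0$ is automatic from $d_W^2 = 0$, and the symbol of $D_{\mathcal E}$ equals $d_A$ because $q_W$ is a Lie algebroid morphism: the induced bracket on linear sections covers the bracket of $A \Rightarrow M$ and $\rho_W$ covers $\rho_A$, so $d_W$ is $q_W$-related to $d_A$.

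For the inverse correspondence at the level of objects, I would start from a DG-vector bundle $\mathcal E \to \mathcal M$ concentrated in degree $0$ and $1$: Vaintrob's theorem recovers Lie algebroids $W \Rightarrow E$ and $A \Rightarrow M$ with $\mathcal E = W[1]_E$ and $\mathcal M = A[1]$, the vector bundle structure $\mathcal E \to \mathcal M$ de-shifts to a DVB $(W \to E; A \to M)$, and the linearity of $D_{\mathcal E}$ together with its symbol being $d_A$ reproduces \eqref{eq:VB-alg} by the same weight argument run backwards. Finally, for morphisms, a morphism of VB-algebroids is a morphism of DVBs that is simultaneously a morphism of the top Lie algebroids; under $\mathcal V$ the latter corresponds bijectively to a morphism of DG-manifolds $W[1]_E \to W'[1]_{E'}$, and compatibility with the four vector bundle structures of the DVBs translates, after shifting, into the statement that this is a morphism of vector bundles of graded manifolds covering $A[1] \to A'[1]$ and intertwining the homological derivations. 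Hence the functor is essentially surjective and fully faithful, so it is an equivalence.

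The step I expect to be the main obstacle is the weight bookkeeping in the second paragraph. One must handle two independent gradings on $C^\infty(W[1]_E)$ simultaneously: the cohomological degree coming from the shift along $W \to E$, and the weight coming from the homogeneity $h$ along the transversal structure $W \to A$. The delicate point is to verify that the anchor term and the bracket term in the explicit expression for $d_W$ separately respect $h$, so that $d_W$ is of weight $0$ exactly under \eqref{eq:VB-alg}; here the facts that core sections have weight $-1$, linear sections weight $0$, and that there are no sections of weight below $-1$ are precisely what make the bookkeeping close up.
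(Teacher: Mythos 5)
Your outline is correct and follows essentially the same route the paper has in mind: the paper itself does not prove this theorem (it is quoted from Voronov's work), but the surrounding machinery it develops --- Lemma \ref{prop:char_lin}, Proposition \ref{prop:lin_der}, the equivalence between weight-$0$ multiderivations of $W \to E$ and linear vector fields on $W[1]_E \to A[1]$, and Proposition \ref{prop:def_VB} identifying VB-algebroid structures with linear Maurer--Cartan elements --- is exactly the weight bookkeeping you propose, specialized to the biderivation $b_W$ and its associated homological vector field $d_W$. The only steps you assert rather than argue (that a vector bundle of graded manifolds concentrated in degrees $0$ and $1$ de-shifts to a DVB, and the treatment of morphisms) are standard and are likewise left to the reference in the paper.
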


\subsection{The linear deformation complex of a VB-algebroid}\label{sec:lin_def}

In this subsection we introduce the main object of this paper: the \emph{linear deformation complex of a VB-algebroid}, first introduced in \cite{etv:infinitesimal} (for different purposes from the present ones). Actually, the whole discussion in Section \ref{Sec:deformations1} extends to VB-algebroids. We skip most of the proofs: they can be carried out in a very similar way as for plain Lie algebroids.

We begin with a DVB $(W \to E;A \to M)$. Denote by $\mathfrak D^\bullet (W, E)$ the space of multiderivations of the vector bundle $W \to E$. As in Subsection \ref{Sec:deformations2}, denote by $h$ the homogeneity structure of $W \to A$. The action of $h$ induces a grading on the space of multiderivations.

\begin{definition} A multiderivation $c \in \mathfrak D^\bullet (W, E)$ is \emph{homogeneous of weight $q$} (or, simply, of \emph{weight} $q$) if  $h_\lambda^* c = \lambda^q c$ for every $\lambda > 0$. A multiderivation is \emph{linear} if it is of weight $0$, and it is \emph{core} if it is of weight $-1$.

We denote by $\mathfrak D^\bullet_q (W,E)$ the space of multiderivations of weight $q$, and by $\mathfrak D^\bullet_{\mathrm{lin}}(W,E)$ and $\mathfrak D^\bullet_{\mathrm{core}}(W,E)$, respectively, the spaces of linear and core multiderivations.
\end{definition}

As $\Gamma_\mathrm{core}(W,E)$ and $\Gamma_\mathrm{lin}(W,E)$ generate $\Gamma(W,E)$, a multiderivation is completely characterized by its action, and the action of its symbol, on linear and core sections. From equation (\ref{eq:phi^*}) and the fact that there are no non-zero sections of weight less than $-1$, it then follows \emph{there are no non-zero multiderivations of weight less than $-1$.} Moreover:

\begin{proposition}\label{prop:lin_der}
Let $c$ be a $k$-derivation of $W \to E$. Then $c$ is linear if and only if all the following conditions are satisfied
\begin{enumerate}
\item $c(\tilde \alpha_1, \ldots, \tilde \alpha_k)$  is a linear section,
\item $c (\tilde \alpha_1, \ldots, \tilde \alpha_{k-1}, \overline \chi_1)$ is a core section,
\item $c (\tilde \alpha_1, \ldots, \tilde \alpha_{k-i}, \overline \chi_1, \ldots, \overline \chi_i) = 0$,
\item $\sigma_{c} (\tilde \alpha_1, \ldots, \tilde \alpha_{k-1})$ is a linear vector field,
\item $\sigma_{c} (\tilde \alpha_1, \ldots, \tilde \alpha_{k-2}, \overline \chi_1)$ is a core vector field,
\item $\sigma_{c}(\tilde \alpha_1, \ldots, \tilde \alpha_{k-i-1}, \overline \chi_1, \ldots, \overline \chi_i) = 0$
\end{enumerate}
for all linear sections $\tilde \alpha_1, \ldots, \tilde \alpha_k$, all core sections $\overline \chi_1, \ldots, \overline \chi_i$ of $W \to E$, and all $i \geq 2$.
\end{proposition}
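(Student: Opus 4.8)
The plan is to translate the abstract weight condition $h_\lambda^* c = c$ into the six concrete conditions (1)--(6) by exploiting the characterization of linear and core sections from Lemma \ref{prop:char_lin}, together with the compatibility relations (\ref{eq:phi^*}) of the automorphism action with the multiderivation structure. The key observation is that each homothety $h_\lambda$ (for $\lambda > 0$) is a vector bundle automorphism of $W \to E$ covering the identity on $E$, so I may apply (\ref{eq:phi^*}) and (\ref{eq:phi^*sigma}) with $\phi = h_\lambda$. Since $\Gamma_{\mathrm{lin}}(W,E)$ and $\Gamma_{\mathrm{core}}(W,E)$ together span $\Gamma(W,E)$ as a $C^\infty(E)$-module, and a multiderivation is determined by its values (and its symbol's values) on these two types of sections, it suffices to test the equation $h_\lambda^* c = c$ on all combinations of linear and core inputs.

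First I would establish the forward direction. Assuming $c$ is linear, I evaluate $h_\lambda^* c$ on a tuple of inputs consisting of $k-i$ linear sections $\tilde\alpha_j$ and $i$ core sections $\overline\chi_l$. Using the middle identity in (\ref{eq:phi^*}), namely $h_\lambda^*(c(w_1,\dots,w_k)) = (h_\lambda^* c)(h_\lambda^* w_1, \dots, h_\lambda^* w_k)$, together with Lemma \ref{prop:char_lin} (which gives $h_\lambda^* \tilde\alpha = \tilde\alpha$ and $h_\lambda^* \overline\chi = \lambda^{-1}\overline\chi$), the weight-zero condition $h_\lambda^* c = c$ forces
\[
h_\lambda^*\bigl(c(\tilde\alpha_1,\dots,\tilde\alpha_{k-i},\overline\chi_1,\dots,\overline\chi_i)\bigr) = \lambda^{-i}\, c(\tilde\alpha_1,\dots,\tilde\alpha_{k-i},\overline\chi_1,\dots,\overline\chi_i).
\]
Reading this through Lemma \ref{prop:char_lin} again: for $i=0$ the output has weight $0$, hence is linear (condition (1)); for $i=1$ it has weight $-1$, hence is core (condition (2)); and for $i \geq 2$ it would have weight $\leq -2$, but there are no nonzero sections of weight below $-1$, so it vanishes (condition (3)). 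The symbol conditions (4)--(6) follow by the identical argument applied to $\sigma_c$, using (\ref{eq:phi^*sigma}) and the fact (recalled in the appendix) that linear vector fields on $E$ are exactly the weight-$0$ vector fields and core vector fields are the weight-$(-1)$ ones.

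For the converse, I assume (1)--(6) hold and must verify $h_\lambda^* c = \lambda^0 c = c$ as multiderivations. Because $c$ and $h_\lambda^* c$ are both genuine multiderivations, and two multiderivations agree once they agree on a spanning set of sections (with matching symbols on that set), it is enough to check that $(h_\lambda^* c)$ and $c$ take equal values on every tuple built from linear and core sections, and likewise for their symbols. Running the weight computation of the previous paragraph in reverse, conditions (1)--(3) guarantee that the output weights are precisely those forced by weight-$0$ behavior of $c$ on each such tuple, and conditions (4)--(6) do the same for the symbol; hence $h_\lambda^* c = c$ on all generators, so $c$ is linear.

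**The main obstacle** I anticipate is bookkeeping rather than conceptual: one must carefully track the $C^\infty(E)$-linearity and the $\lambda$-weights when the inputs are $C^\infty(E)$-multiples of linear and core sections, since a generic section is only a combination $f\tilde\alpha + g\overline\chi$ with $f,g \in C^\infty(E)$, and the functions $f,g$ themselves decompose into homogeneous pieces under $h$ that shift the weight. The cleanest way to handle this is to note that $c$ being of pure weight $q$ is equivalent to the graded (weight-decomposed) pieces of $c(w_1,\dots,w_k)$ matching the sum of the weights of the $w_j$ shifted by $q$; restricting to homogeneous generators then isolates exactly the six cases above, and the Leibniz rule for the symbol ensures the symbol conditions are the consistent infinitesimal shadow of the output conditions. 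Verifying this compatibility between the output conditions (1)--(3) and the symbol conditions (4)--(6), so that no spurious constraints arise, is the one point deserving genuine care.
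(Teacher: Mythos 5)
Your proposal is correct and follows essentially the same route as the paper, whose proof consists of the single remark that the statement ``follows immediately from Lemma \ref{prop:char_lin}''; you have simply written out the weight computation that remark alludes to, including the converse via the fact that a multiderivation is determined by its action (and its symbol's action) on linear and core sections. One slip worth correcting: $h_\lambda$ is \emph{not} an automorphism of $W \to E$ covering the identity on $E$ --- by the DVB axioms it covers the homothety $\lambda\cdot{}$ of $E \to M$ --- and it is precisely this nontrivial base map that makes the symbol conditions (4)--(6) produce \emph{linear} and \emph{core} vector fields on $E$ (i.e.\ weights with respect to the homogeneity structure of $E \to M$); your subsequent computations implicitly use the correct transformation law, so the argument still goes through.
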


\begin{proof} This follows immediately from Lemma \ref{prop:char_lin} (see also \cite{etv:infinitesimal}). \end{proof}

In particular, a linear $k$-derivation is uniquely determined by its action on $k$ linear sections and on $k-1$ linear sections and a core section, and by the action of its symbol on $k-1$ linear sections and on $k-2$ linear sections and a core section (see also \cite[Theorem 3.34]{etv:infinitesimal}). 

%

It immediately follows from (\ref{eq:pb_Gerst}) that $\mathfrak D^\bullet_{\mathrm{lin}} (W, E)[1]$ is a graded Lie subalgebra of $\mathfrak D^\bullet (W, E)[1]$. The following proposition is then straightforward.

\begin{proposition}\label{prop:def_VB}
VB-algebroid structures on the DVB $(W \to E; A \to M)$ are in one-to-one correspondence with Maurer-Cartan elements in $\mathfrak D^\bullet_{\mathrm{lin}} (W, E)[1]$.
\end{proposition}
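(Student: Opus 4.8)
The plan is to reduce the claim to the Lie-algebroid case (Proposition \ref{prop:def_MC}) applied to the top bundle $W \to E$, and then to match the defining axioms of a VB-algebroid with the explicit description of linear biderivations provided by Proposition \ref{prop:lin_der}.

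First I would apply Proposition \ref{prop:def_MC} to the vector bundle $W \to E$: Lie algebroid structures $W \Rightarrow E$ are in bijection with Maurer–Cartan elements $b_W$ of $\mathfrak D^\bullet (W, E)[1]$, i.e.~biderivations with $\llbracket b_W, b_W \rrbracket = 0$. Since $\mathfrak D^\bullet_{\mathrm{lin}} (W, E)[1]$ is a graded Lie subalgebra of $\mathfrak D^\bullet (W, E)[1]$, its Maurer–Cartan elements are precisely those $b_W$ which, in addition, are \emph{linear} biderivations. Hence it suffices to show that a Lie algebroid structure $b_W$ on $W \to E$ makes $(W \Rightarrow E; A \to M)$ into a VB-algebroid exactly when $b_W$ is linear.

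Next I would specialize Proposition \ref{prop:lin_der} to $k = 2$. The symbol of a biderivation takes a single argument, so the conditions there involving two or more core entries either reduce to a single identity or become vacuous, and what remains is: (1) $b_W(\tilde\alpha_1, \tilde\alpha_2)$ is linear; (2) $b_W(\tilde\alpha, \overline\chi)$ is core; (3) $b_W(\overline\chi_1, \overline\chi_2) = 0$; (4) $\sigma_{b_W}(\tilde\alpha)$ is a linear vector field; (5) $\sigma_{b_W}(\overline\chi)$ is a core vector field. Conditions (1)–(3) are literally the bracket relations (\ref{eq:VB-alg}) of Definition \ref{def:vb-alg}, so these match automatically.

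It then remains to identify conditions (4)–(5) with the anchor axiom of Definition \ref{def:vb-alg}. For this I would use that the symbol of the bracket biderivation is exactly the anchor, i.e.~$\sigma_{b_W} = \rho_W : W \to TE$ as a bundle map, so that $\sigma_{b_W}(w) = \rho_W \circ w$ for every section $w$. Recalling from the tangent-double example that linear (resp.~core) sections of $TE \to E$ are precisely the linear (resp.~core) vector fields on $E$, conditions (4)–(5) assert that $\rho_W$ sends linear sections of $W \to E$ to linear vector fields and core sections to core vector fields. The main point, and the only slightly delicate step, is to check that this is equivalent to $\rho_W$ being a morphism of double vector bundles into $TE$ covering $\operatorname{id}_E$, equivalently to $\rho_W$ covering a bundle map $\rho_A : A \to TM$ as required in Definition \ref{def:vb-alg}. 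One direction is immediate; for the converse one uses that $\rho_W$ is already linear over $E$ together with the fact that $\Gamma_{\mathrm{lin}}(W,E)$ and $\Gamma_{\mathrm{core}}(W,E)$ generate $\Gamma(W,E)$ over $C^\infty(E)$, so that the prescribed behaviour on generators forces $\rho_W$ to intertwine the two homogeneity structures and hence to cover a well-defined $\rho_A$. Granting this, linearity of $b_W$ is equivalent to the full list of VB-algebroid axioms, which completes the proof.
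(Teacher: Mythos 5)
Your argument is correct and is precisely the "straightforward" verification the paper leaves implicit: it declares the proposition to follow from Proposition \ref{prop:def_MC}, the fact that $\mathfrak D^\bullet_{\mathrm{lin}}(W,E)[1]$ is a graded Lie subalgebra, and the observation that the VB-algebroid axioms amount to $b_W$ having weight $0$. Your extra care in matching conditions (4)--(5) of Proposition \ref{prop:lin_der} with the anchor axiom (via $\sigma_{b_W}=\rho_W$, the generation of $\Gamma(W,E)$ by linear and core sections, and the homogeneity-structure characterization of bundle maps) is exactly the right way to fill in the one point the paper glosses over.
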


Now, fix a VB-algebroid structure $(W \Rightarrow E; A \Rightarrow M)$ on the DVB $(W \to E; A \to M)$, and denote by $b_W$ the Lie bracket on sections of $W \to E$. We also denote by $C^\bullet_{\mathrm{def}} (W, E)$ the deformation complex of the top algebroid $W \Rightarrow E$. It is clear that $b_W$ is a linear biderivation of $W \to E$, i.e.~$b_W \in \mathfrak D^2_{\mathrm{lin}} (W, E)$. Hence $\mathfrak D^\bullet_{\mathrm{lin}} (W, E)[1]$ is a subDGLA of $C^\bullet_{\mathrm{def}} (W, E)$, denoted $C^{\bullet}_{\mathrm{def}, \mathrm{lin}}(W)$, and called the  \emph{linear deformation complex} of $W \Rightarrow E$. Its cohomology is denoted $H^\bullet_{\mathrm{def}, \mathrm{lin}} (W)$ and called the  \emph{linear deformation cohomology} of $(W \Rightarrow E; A \Rightarrow M)$.

\begin{definition} A  \emph{linear deformation} of $b_W$ (or simply a \emph{deformation}, if this does not lead to confusion) is a(n other) VB-algebroid structure on the DVB $(W \to E;A \to M)$. \end{definition}

Exactly as for Lie algebroids,  Proposition \ref{prop:def_VB} is equivalent to say that  \emph{deformations of $b_W$ are in one-to-one correspondence with Maurer-Cartan elements of $C^{\bullet}_{\mathrm{def}, \mathrm{lin}}(W)$.}

Let $b_0, b_1$ be linear deformations of $b_W$. We say that $b_0$ and $b_1$ are  \emph{equivalent} if there exists a DVB isotopy taking $b_0$ to $b_1$, i.e.~a smooth path of DVB automorphisms $\phi_t: W \to W$, $t \in [0,1]$ such that $\phi_0 = \operatorname{id}_W$ and $\phi_1^* b_1 = b_0$. On the other hand, two Maurer-Cartan elements $c_0, c_1$ in $C^\bullet_{\mathrm{def}, \mathrm{lin}}(W)$ are  \emph{gauge-equivalent} if they are interpolated by a smooth path of $1$-cochains $c_t \in C^\bullet_{\mathrm{def}, \mathrm{lin}}(W)$, and $c_t$ is a solution of the following ODE
\[
\dfrac{dc_t}{dt} = \delta \Delta_t + \llbracket c_t, \Delta_t \rrbracket,
\]  
for some smooth path of $0$-cochains $\Delta_t \in C^\bullet_{\mathrm{def}, \mathrm{lin}}(W)$, $t \in [0,1]$. Equivalently,
\[
\frac{db_t}{dt} = \llbracket b_t, \Delta_t \rrbracket,
\]
where $b_t = b_W + c_t$.

\begin{proposition}
The DGLA $C^{\bullet}_{\mathrm{def}, \mathrm{lin}}(W)$ controls deformations of the VB-algebroid $(W \Rightarrow E; A \Rightarrow M)$ in the following sense. Let $b_0 = b_W + c_0, b_1 = b_W + c_1$ be linear deformations of $b_W$. If $b_0, b_1$ are equivalent, then $c_0, c_1$ are gauge-equivalent. If $M$ is compact, the converse is also true. \end{proposition}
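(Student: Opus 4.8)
The plan is to mimic the proof of Proposition \ref{prop:eq_G_eq} almost verbatim, the only genuinely new point being the compatibility of both sides of the correspondence with the homogeneity structure $h$ of $W \to A$. The bridge between the two settings is the observation that DVB automorphisms of $W$ are precisely the vector bundle automorphisms of $W \to E$ that commute with $h$, and, infinitesimally, that linear derivations (i.e.~degree-$0$ cochains in $C^\bullet_{\mathrm{def},\mathrm{lin}}(W)$) are precisely the derivations of $W \to E$ whose pull-back action commutes with $h_\lambda^*$. Indeed, differentiating the identity $h_\lambda^* \phi_t^* = \phi_t^* h_\lambda^*$ in $t$ and using the defining relation (\ref{eq:xi_t}) for the generator $\Delta_t$ shows that $\phi_t$ commutes with $h_\lambda$ if and only if $\Delta_t$ commutes with $h_\lambda^*$, i.e.~$h_\lambda^* \Delta_t = \Delta_t$ for all $\lambda > 0$, which is exactly the condition $\Delta_t \in \mathfrak D^0_{\mathrm{lin}}(W,E)$. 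I would record this equivalence once at the outset and then reuse it in both directions.

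\emph{Forward direction.} Suppose $\phi_t : W \to W$ is a DVB isotopy with $\phi_0 = \operatorname{id}_W$ and $\phi_1^* b_1 = b_0$. As in Proposition \ref{prop:eq_G_eq}, set $b_t = (\phi_t^{-1})^* b_0 = b_W + c_t$ and let $\Delta_t$ be its generator. By (\ref{eq:pb_Gerst}) each $b_t$ is a Maurer--Cartan element, and since $b_0$ is linear and $\phi_t$ commutes with $h_\lambda$ we get $h_\lambda^* b_t = (\phi_t^{-1})^* h_\lambda^* b_0 = b_t$, so $b_t$, hence $c_t$, is linear; by the bridge above $\Delta_t$ is a linear derivation. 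The remaining computation, namely differentiating $\phi_t^*(b_t(w,w')) = b_0(\phi_t^* w, \phi_t^* w')$ to arrive at (\ref{eq:b_t}), is word for word the one in Proposition \ref{prop:eq_G_eq}, and now takes place entirely inside $C^\bullet_{\mathrm{def},\mathrm{lin}}(W)$. Thus $c_0$ and $c_1$ are gauge-equivalent in the linear complex.

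\emph{Backward direction ($M$ compact).} Given a path of linear derivations $\Delta_t$ and linear cochains $b_t = b_W + c_t$ satisfying (\ref{eq:b_t}), I would integrate $\Delta_t$ to a flow $\phi_t$ and check that it is a DVB isotopy. By Proposition \ref{prop:lin_der} the symbol $\sigma_{\Delta_t}$ is a linear vector field on $E$, hence covers a time-dependent vector field $X_t$ on $M$; compactness of $M$ makes $X_t$ complete, and, as in the Lie algebroid case, completeness is then propagated by linearity through the two layers of vector bundle structure, first from $M$ up to $E$ and then from $E$ up to $W$, yielding a complete flow $\phi_t$ of automorphisms of $W \to E$. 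Since $\Delta_t$ is linear, the bridge above guarantees that $\phi_t$ commutes with $h_\lambda$, so it consists of DVB automorphisms. Finally one verifies the analog of (\ref{eq:Xi_t}), $\phi_t^*(b_t(w,w')) = b_0(\phi_t^* w, \phi_t^* w')$, by noting that it holds at $t = 0$ and that both sides have equal $t$-derivatives by (\ref{eq:b_t}); setting $t = 1$ exhibits $\phi_t$ as a DVB isotopy taking $b_0$ to $b_1$.

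The main obstacle is not the algebra, which is identical to the Lie algebroid case, but the two structural checks that keep the argument inside the linear world: that infinitesimal DVB symmetries correspond precisely to linear (weight-$0$) derivations, settled once through the homogeneity computation above, and the completeness of the generated flow. For the latter the delicate point is that $W$ is far from compact, so completeness cannot come from compactness of the total space; it must instead be carried up from the complete base flow on $M$ through $E$ and then through $W$, using that the flow is linear in each fiber, together with the requirement that it commute with $h_\lambda$. This double linear-completeness step is exactly where compactness of $M$ is used, and is the part that deserves the most care.
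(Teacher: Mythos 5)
Your proposal is correct and follows essentially the same route as the paper: reduce to the argument of Proposition \ref{prop:eq_G_eq}, with DVB automorphisms in place of vector bundle automorphisms and linear derivations in place of derivations, the only new points being the correspondence between the two (which the paper cites from \cite{etv:infinitesimal} and you verify directly via the homogeneity structure) and the completeness of the flow, obtained by lifting the complete base flow on compact $M$ through the linear structures. No gaps.
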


\begin{proof}
The proof is similar to that of Proposition \ref{prop:eq_G_eq}, with linear derivations replacing derivations and DVB automorphisms replacing vector bundle automorphisms. We only need to be careful when using the compactness hypothesis. Recall from \cite{etv:infinitesimal} that a linear derivation generates a flow by DVB automorphisms. In particular, if $\Delta_t$ is a time-dependent linear derivation of $W \to E$, then its symbol $X_t = \sigma (\Delta_t) \in \mathfrak X (E)$ is a \emph{linear vector field}, hence it generates a flow by vector bundle automorphisms of $E$. From the compactness of $M$, it follows that $X_t$, hence the flow of $\Delta_t$, is complete.
\end{proof} 
 
\begin{remark}
An  \emph{infinitesimal deformation} of $(W \Rightarrow E; A \Rightarrow M)$ is an element $c \in C^1_{\mathrm{def}, \mathrm{lin}}(W)$ such that $\delta c = 0$, i.e.~$c$ is a 1-cocycle in $C^{\bullet}_{\mathrm{def}, \mathrm{lin}}(W)$. If $c_t$ is a smooth path of Maurer-Cartan elements starting at $0$, then $\frac{dc_t}{dt}|_{t=0}$ is an infinitesimal deformation of $(W \Rightarrow E; A \Rightarrow M)$. Similarly as for Lie algebroids, $H^1_{\mathrm{def}, \mathrm{lin}}(W)$ is the formal tangent space to the moduli space of linear deformations under gauge equivalence. It also follows from standard deformation theory arguments that $H^2_{\mathrm{def}, \mathrm{lin}}(W)$ contains obstructions to the extension of an infinitesimal linear deformation to a formal one. Finally, we interpret $0$-degree deformation cohomologies. It easily follows from the definition that $0$-cocycles in $C^\bullet_{\mathrm{def}, \mathrm{lin}} (A)$ are \emph{infinitesimal multiplicative} (IM) derivations of $(W \Rightarrow E; A \Rightarrow M)$ i.e.~derivations of $W \to E$ generating a flow by VB-algebroid automorphisms \cite{etv:infinitesimal}. Among those, $1$-cocycles are \emph{inner IM derivations}, i.e.~IM derivations of the form $[\tilde \alpha, -]$ for some \emph{linear} section $\tilde \alpha$ of $W \to E$. So $H^0_{\mathrm{def}, \mathrm{lin}}(W)$ consists of \emph{outer IM derivations}. See \cite{etv:infinitesimal} for more details.
\end{remark}

\subsubsection{Alternative descriptions}

Let $(W\Rightarrow E; A \Rightarrow M)$ be a VB-algebroid. Then $W \Rightarrow E$ is a Lie algebroid. As in Subsection \ref{subsec:alt_desc} we denote by $W[1]_E$ the DG-manifold obtained from $W$ shifting by the degree in the fibers of $W \to E$. So $C^{\bullet}_{\mathrm{def}}(W) \cong \mathfrak{X}(W[1]_E)^{\bullet}$. Moreover, it is easy to see from Formula \ref{eq:def_der} and Proposition \ref{prop:lin_der} that a deformation cochain $c \in C^{\bullet}_{\mathrm{def}}(W)$ is linear if and only if the corresponding vector field $\delta_c \in \mathfrak{X}(W[1]_E)^{\bullet}$ is a linear vector field with respect to the vector bundle structure $W[1]_E \to A[1]$. So there is a canonical isomorphism of DGLAs
\[
C^{\bullet}_{\mathrm{def}, \mathrm{lin}}(W) \cong \mathfrak{X}_{\mathrm{lin}}(W[1]_E)^{\bullet}.
\]
As linear vector fields are equivalent to derivations, we also get
\begin{equation}\label{eq:der_grad}
C^{\bullet}_{\mathrm{def}, \mathrm{lin}}(W)  \cong \mathfrak D(W[1]_E, A[1])^{\bullet}
\end{equation}
as DGLAs.

\subsubsection{Deformations of $A$ from linear deformations of $W$}

There is a natural surjection $C^{\bullet}_{\mathrm{def}, \mathrm{lin}}(W) \to C^{\bullet}_{\mathrm{def}}(A)$ which is easily described in the graded geometric picture: it is just the projection 
\[
\mathfrak{X}_{\mathrm{lin}}(W[1]_E)^{\bullet} \to \mathfrak{X}(A[1])^{\bullet},
\]
of linear vector fields on the base. Equivalently, it is the symbol map
\[
\sigma : \mathfrak{D}(W[1]_E, A[1])^{\bullet} \to \mathfrak{X}(A[1])^{\bullet}.
\]
In particular, we get a short exact sequence of DGLAs
\begin{equation}\label{eq:SESDGLAs}
0 \to \operatorname{\operatorname{\mathfrak{End}}}( W[1]_E)^\bullet \to \mathfrak{X}_{\mathrm{lin}}(W[1]_E)^{\bullet} \to \mathfrak{X}(A[1])^{\bullet} \to 0,
\end{equation}
where $\operatorname{\mathfrak{End}}( W[1]_E)^\bullet$ is the space of (graded) endomorphisms of $W[1]_E \to A[1]$. Equivalently, there is a short exact sequence
\begin{equation}\label{eq:SESDGLAs_II}
0 \to \operatorname{\operatorname{\mathfrak{End}}}( W[1]_E)^\bullet \to C^\bullet_{\mathrm{def},\mathrm{lin}}(W) \to C^\bullet_{\mathrm{def}}(A) \to 0.
\end{equation}
Notice that the subDGLA $\operatorname{\operatorname{\mathfrak{End}}}( W[1]_E)^\bullet$ \emph{controls deformations of $(W \Rightarrow E; A \Rightarrow M)$ that fix $A \Rightarrow M$}, i.e.~deformations of $W$ that fix $b_A$ (the Lie algebroid structure on $A$) identify with Maurer-Cartan elements in $\operatorname{\operatorname{\mathfrak{End}}}( W[1]_E)^\bullet$. Finally, we obtain a long exact sequence
\begin{equation}\label{eq:LES_VB}
\dots \to H^k(\operatorname{\mathfrak{End}}(W[1]_E)) \to H^k_{\mathrm{def}, \mathrm{lin}}(W) \to H^k_{\mathrm{def}}(A) \to H^{k+1}(\operatorname{\mathfrak{End}}(W[1]_E)) \to \cdots
\end{equation}
connecting the linear deformation cohomology of $W$ with the deformation cohomology of $A$.

\begin{remark}
We will not need a description of the subcomplex $\operatorname{\mathfrak{End}}( W[1]_E )^\bullet\subset \mathfrak{X}_{\mathrm{lin}}(W[1]_E)^{\bullet}$ in terms of more classical data in this paper. However, we stress that this description exists in analogy with \cite[Theorem 3.34]{etv:infinitesimal}.
\end{remark}

\subsubsection{Deformations of the dual VB-algebroid}

We conclude this section noticing that the linear deformation complex of a VB-algebroid is canonically isomorphic to that of its dual. Let $(W \Rightarrow E;A \Rightarrow M)$ be a VB-algebroid with core $C$, and let $(W^\ast_A \Rightarrow C^\ast ; A \Rightarrow M)$ be the dual VB-algebroid.

\begin{theorem}
There is a canonical isomorphism of DGLAs $C^{\bullet}_{\mathrm{def}, \mathrm{lin}}(W) \cong C^{\bullet}_{\mathrm{def}, \mathrm{lin}}(W^*_A)$.
\end{theorem}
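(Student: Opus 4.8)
The plan is to work entirely inside the graded‑geometric model of the linear deformation complex and to realize the duality as the composite of a uniform degree shift with the passage to the dual derivation. By the isomorphism (\ref{eq:der_grad}), applied to both $W\Rightarrow E$ and its dual $W^\ast_A\Rightarrow C^\ast$, it suffices to produce a canonical isomorphism of DGLAs
\[
\mathfrak{D}(W[1]_E, A[1])^\bullet \cong \mathfrak{D}(W^\ast_A[1]_{C^\ast}, A[1])^\bullet,
\]
where $W^\ast_A[1]_{C^\ast}$ is the graded manifold obtained by shifting the fiber degree of $W^\ast_A\to C^\ast$, viewed as a graded vector bundle over $A[1]$ through $W^\ast_A\to A$. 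Here the bracket on each space of derivations is the graded commutator, and the differential is the adjoint action of the homological derivation, as in the graded‑geometric description of $\delta$.

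First I would identify the two graded vector bundles over $A[1]$ by tracking the core/side decomposition through the shifts. The fiber of $W[1]_E\to A[1]$ is the side $E$ in degree $0$ together with the core $C$ in degree $1$, while the fiber of $W^\ast_A[1]_{C^\ast}\to A[1]$ is the side $C^\ast$ in degree $0$ together with the core $E^\ast$ in degree $1$. Since the graded dual negates degrees, this exhibits a canonical isomorphism of graded vector bundles $W^\ast_A[1]_{C^\ast}\cong\left((W[1]_E)^\ast_{A[1]}\right)[1]$ over $A[1]$, that is, the degree shift of the fiberwise dual of $W[1]_E$. This identification is induced by the canonical duality pairing between $W\to A$ and $W^\ast_A\to A$, hence is canonical and free of choices.

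Next I would transport derivations across this identification. A uniform fiber degree shift $\mathcal E\rightsquigarrow\mathcal E[1]$ induces a canonical degree‑ and bracket‑preserving isomorphism $\mathfrak{D}(\mathcal E)^\bullet\cong\mathfrak{D}(\mathcal E[1])^\bullet$ covering $\operatorname{id}$ on $\mathfrak{X}(A[1])^\bullet$, since the base is untouched. The dual‑derivation construction $D\mapsto D^\vee$, defined through the Leibniz rule
\[
X\langle\varphi,\varepsilon\rangle = \langle D^\vee\varphi,\varepsilon\rangle + (-1)^{|D||\varphi|}\langle\varphi, D\varepsilon\rangle,
\]
where $X=\sigma(D)$ is the symbol, gives a canonical isomorphism $\mathfrak{D}(\mathcal E)^\bullet\cong\mathfrak{D}(\mathcal E^\ast)^\bullet$. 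A short sign check shows that $D\mapsto D^\vee$ preserves the degree and the symbol and satisfies $[D_1,D_2]^\vee=[D_1^\vee,D_2^\vee]$, so it is an isomorphism of graded Lie algebras covering $\operatorname{id}$ on $\mathfrak{X}(A[1])^\bullet$. Composing the two yields a graded Lie algebra isomorphism $\Phi:\mathfrak{D}(W[1]_E,A[1])^\bullet\to\mathfrak{D}(W^\ast_A[1]_{C^\ast},A[1])^\bullet$.

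It remains to check that $\Phi$ is a chain map for the two deformation differentials, and this is the crux. Each side carries a homological derivation, $D_W$ for $W[1]_E$ and $D_{W^\ast_A}$ for $W^\ast_A[1]_{C^\ast}$, whose adjoint action is the differential $\delta$. Because $\Phi$ preserves brackets, $\Phi([D_W,D])=[\Phi(D_W),\Phi(D)]$, so $\Phi$ intertwines the differentials as soon as $\Phi(D_W)=D_{W^\ast_A}$. The hard part is exactly this identity: one must show that the homological derivation encoding the dual VB‑algebroid structure is the dual, transported through the shift, of the one encoding $W$. I would prove it conceptually by observing that the dual of a DG‑vector bundle is again a DG‑vector bundle, since $[D^\vee,D^\vee]=[D,D]^\vee=0$ and $\sigma(D^\vee)=\sigma(D)=d_A$, so that $\left((W[1]_E)^\ast_{A[1]}\right)[1]$ is canonically a DG‑vector bundle over $A[1]$; by Theorem \ref{prop:grad_VB} it corresponds to a VB‑algebroid, which must be $W^\ast_A\Rightarrow C^\ast$ by uniqueness of the dual construction (or, more concretely, by matching the dual bracket on linear and core sections against $b_W$ via the pairing, using the explicit structure maps of the dual VB‑algebroid). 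Granting $\Phi(D_W)=D_{W^\ast_A}$, the map $\Phi$ is a DGLA isomorphism, which transports back through (\ref{eq:der_grad}) to the desired canonical isomorphism $C^{\bullet}_{\mathrm{def},\mathrm{lin}}(W)\cong C^{\bullet}_{\mathrm{def},\mathrm{lin}}(W^\ast_A)$.
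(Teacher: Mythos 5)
Your proposal follows essentially the same route as the paper's own (sketched) proof: both pass through the isomorphism $C^{\bullet}_{\mathrm{def},\mathrm{lin}}(W)\cong\mathfrak D(W[1]_E,A[1])^{\bullet}$, identify $W^\ast_A[1]_{C^\ast}$ with a degree shift of the fiberwise dual of $W[1]_E$, and transport derivations through the dual-derivation and shift isomorphisms. You are in fact more explicit than the paper about the one genuinely substantive point --- verifying that the homological derivation on the dual side is the one encoding the dual VB-algebroid structure, i.e.\ $\Phi(D_W)=D_{W^\ast_A}$ --- which the paper relegates to ``straightforward details left to the reader,'' so your write-up is a correct and slightly more complete version of the same argument.
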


\begin{proof}
There is an easy proof exploiting graded geometry. We only sketch it, and leave the straightforward details to the reader. So, first of all, it is easy to see, e.g.~in local coordinates, that the vector bundles of graded manifolds $W^\ast_A [1]_{C^\ast} \to A[1]$ and $W[1]_E^\ast \to A[1]$ are actually isomorphic up to a shift in the degree of the fiber coordinates. Additionally, derivations of a vector bundle of graded manifolds are canonically isomorphic to that of
\begin{enumerate}
\item its dual,
\item any vector bundle obtained from it by a shift in the degree of the fibers.
\end{enumerate}
We conclude that
\[
C^{\bullet}_{\mathrm{def}, \mathrm{lin}}(W) \cong \mathfrak D(W[1]_E, A[1])^\bullet \cong \mathfrak D(W[1]_E^\ast, A[1])^\bullet \cong \mathfrak D(W^\ast[1]_{C^\ast}, A[1])^\bullet \cong C^\bullet_{\mathrm{def}, \mathrm{lin}}(W^\ast_A).
\]
\end{proof}

\subsection{From deformation cohomology to linear deformation cohomology}\label{sec:linearization_1}

Let $(W \Rightarrow E; A \Rightarrow M)$ be a VB-algebroid. We have shown that deformations of the VB-algebroid structure are controlled by a subDGLA $C^\bullet_{\mathrm{def}, \mathrm{lin}} (W)$ of the deformation complex $C^\bullet_{\mathrm{def}}(W)$ of the top Lie algebroid $W \Rightarrow E$. In the next section, we show that the inclusion $C^\bullet_{\mathrm{def}, \mathrm{lin}} (W) \hookrightarrow C^\bullet_{\mathrm{def}}(W)$ induces an inclusion $H^\bullet_{\mathrm{def}, \mathrm{lin}} (W) \hookrightarrow H^\bullet_{\mathrm{def}}(W)$ in cohomology. In particular, given an infinitesimal linear deformation that is trivial as infinitesimal deformation of the Lie algebroid $W \Rightarrow A$, i.e.~it is connected to the zero deformation by an infinitesimal isotopy of vector bundle maps, then it is also trivial as infinitesimal linear deformation, i.e.~it is also connected to the zero deformation by an infinitesimal isotopy of DVB maps.

The key idea is adapting to the present setting the ``homogenization trick'' of \cite{cabrera:hom}. Let $E \to M$ be a vector bundle. In their paper, Cabrera and Drummond consider the following natural projections from the $C^\infty (E)$ to its $C^\infty (M)$-submodules $C^\infty_q(E)$ (of \emph{weight $q$ homogeneous functions}):

\begin{equation}\label{eq:proj_fun}
\mathrm{pr}_q: C^\infty (E) \to C^\infty_q (E), \quad f \mapsto \dfrac{1}{q!} \dfrac{d^q}{d \lambda^q} |_{\lambda = 0} h_\lambda^* f.
\end{equation}

Notice that $\mathrm{pr}_q(f)$ is just the degree $q$ part of the (fiber-wise) Taylor polynomial of $f$. In the following, we adopt the notations from Appendix \ref{Sec:homogeneity} and denote
\begin{equation}\label{eq:core,lin}
\begin{aligned}
\mathrm{core} := & \ \mathrm{pr}_0: C^\infty (E) \to C^\infty_{\mathrm{core}} (E), \quad f \mapsto f_{\mathrm{core}} = h_0^* f, \\
\mathrm{lin} := & \ \mathrm{pr}_1: C^\infty (E) \to C^\infty_\mathrm{lin} (E), \quad f \mapsto f_{\mathrm{lin}} =  \dfrac{d}{d \lambda} |_{\lambda = 0} h_\lambda^* f,
\end{aligned}
\end{equation}
where $C^\infty_{\mathrm{core}} (E) := C^\infty_0 (E)$, and $C^\infty_\mathrm{lin} (E) := C^\infty_1 (M)$.

Formula (\ref{eq:proj_fun}) does not apply directly to multiderivations. To see why, let $(W \to E, A \to M)$ be a DVB, let $h$ be the homogeneity structure of $W \to A$, and let $c \in \mathfrak D^\bullet (W,E)$. Then the curve $\lambda \mapsto h_\lambda^* c$ is not defined in $0$. Actually, $\lambda = 0$ is a ``pole of order $1$'' for $h_\lambda^\ast c$. More precisely, we have the following:

\begin{proposition} 
The limit
\[
\lim_{\lambda \to 0} \lambda \cdot h_\lambda^* c
\]
exists and defines a core multiderivation $c_{\mathrm{core}}$.
\end{proposition}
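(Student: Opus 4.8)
The plan is to reduce the statement to the weight decomposition induced by $h$ and to exploit the crucial fact, recalled just above, that there are no non-zero multiderivations (equivalently, no non-zero sections of $W\to E$) of weight less than $-1$. Heuristically, decomposing $c=\sum_{q\ge -1}c_q$ into its homogeneous components gives $h_\lambda^\ast c=\sum_{q\ge-1}\lambda^q c_q$, whence $\lambda\, h_\lambda^\ast c=\sum_{q\ge-1}\lambda^{q+1}c_q$; as $\lambda\to 0$ every term with $q\ge 0$ vanishes while the $q=-1$ term survives and yields the core component $c_{-1}$. To turn this into a rigorous argument without worrying about convergence of the (generally infinite) weight decomposition, I would evaluate everything on linear and core sections, since a multiderivation is determined by such values together with $C^\infty(E)$-multilinearity and the Leibniz rule (Proposition \ref{prop:lin_der}).

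The key technical lemma I would isolate concerns sections: for a \emph{fixed} $w\in\Gamma(W,E)$, the limit $\lim_{\lambda\to0}\lambda\, h_\lambda^\ast w$ exists and equals the core part $w_{\mathrm{core}}$ of $w$. To prove it, write $w=\sum_i f_i\tilde\alpha_i+\sum_j g_j\overline\chi_j$ with $f_i,g_j\in C^\infty(E)$, $\tilde\alpha_i$ linear and $\overline\chi_j$ core, which is possible as these sections span $\Gamma(W,E)$ over $C^\infty(E)$. Using the first identity in \eqref{eq:phi^*} together with Lemma \ref{prop:char_lin}, namely $h_\lambda^\ast\tilde\alpha_i=\tilde\alpha_i$ and $h_\lambda^\ast\overline\chi_j=\lambda^{-1}\overline\chi_j$, one gets
\[
\lambda\, h_\lambda^\ast w=\sum_i \lambda\,(h_\lambda^\ast f_i)\,\tilde\alpha_i+\sum_j (h_\lambda^\ast g_j)\,\overline\chi_j .
\]
Since $h_\lambda^\ast f=f\circ h_\lambda$ depends smoothly on $\lambda$ down to $\lambda=0$, with $h_0^\ast f=f_{\mathrm{core}}$, the first sum tends to $0$ and the second to $\sum_j (g_j)_{\mathrm{core}}\,\overline\chi_j=w_{\mathrm{core}}$, a core section. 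I would then transfer this to multiderivations: for linear sections, \eqref{eq:phi^*} gives $(h_\lambda^\ast c)(\tilde\alpha_1,\dots,\tilde\alpha_k)=h_\lambda^\ast\big(c(\tilde\alpha_1,\dots,\tilde\alpha_k)\big)$, so $(\lambda\,h_\lambda^\ast c)(\tilde\alpha_1,\dots,\tilde\alpha_k)$ converges to $\big(c(\tilde\alpha_1,\dots,\tilde\alpha_k)\big)_{\mathrm{core}}$; for arguments involving core sections the substitution $h_\lambda^{-1\,\ast}\overline\chi=\lambda\overline\chi$ produces extra positive powers of $\lambda$, so those values tend to $0$. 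The symbol is handled identically using \eqref{eq:phi^*sigma} and the analogous spanning of vector fields on $E$ by linear and core ones. Each $\lambda\,h_\lambda^\ast c$ is a genuine multiderivation with symbol $\lambda\,h_\lambda^\ast\sigma_c$, and since both families converge, the Leibniz rule passes to the limit; hence the limit $c_{\mathrm{core}}$ is an $\mathbb R$-multilinear, skew-symmetric multiderivation with symbol $(\sigma_c)_{\mathrm{core}}$.

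Finally I would verify that $c_{\mathrm{core}}$ is genuinely of weight $-1$. Using that $h$ is an action of the multiplicative monoid $\mathbb R_{\ge0}$, so that $h_\mu^\ast h_\lambda^\ast=h_{\lambda\mu}^\ast$, and the continuity of $h_\mu^\ast$, one computes
\[
h_\mu^\ast c_{\mathrm{core}}=\lim_{\lambda\to0}\lambda\, h_{\lambda\mu}^\ast c=\mu^{-1}\lim_{\nu\to0}\nu\, h_\nu^\ast c=\mu^{-1}c_{\mathrm{core}},
\]
after the substitution $\nu=\lambda\mu$, which confirms that $c_{\mathrm{core}}\in\mathfrak D^\bullet_{\mathrm{core}}(W,E)$. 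The only delicate point, and the one where the hypotheses really enter, is the existence of the limit: this is exactly the assertion that $\lambda=0$ is at worst a simple pole of $\lambda\mapsto h_\lambda^\ast c$, which rests on the absence of components of weight $<-1$; the remaining work is the careful bookkeeping of the two scalar multiplications (in the fibers of $W\to E$ and $W\to A$) and the verification that the limiting object is still a multiderivation.
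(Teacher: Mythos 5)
Your argument is correct and follows essentially the same route as the paper: the paper establishes existence of the limit by a coordinate computation (which your evaluation on linear and core sections, using that they generate $\Gamma(W,E)$ over $C^\infty(E)$, makes explicit in an invariant way) and then verifies the weight $-1$ property by exactly the computation $h_\mu^\ast c_{\mathrm{core}}=\lim_{\lambda\to 0}\lambda\, h_{\lambda\mu}^\ast c=\mu^{-1}c_{\mathrm{core}}$ that you give. Your expanded treatment of the existence step, including the transfer to the symbol via \eqref{eq:phi^*sigma}, is a sound filling-in of what the paper leaves to the reader.
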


\begin{proof} The existence of the limit can be shown in coordinates. Moreover, for every $\mu \neq 0$,
\[
h_\mu^* c_{\mathrm{core}} = h_\mu^* \left( \lim_{\lambda \to 0} \lambda \cdot h_\lambda^* c \right) = \lim_{\lambda \to 0} \lambda \cdot h^*_\mu h^*_\lambda c = \lim_{\lambda \to 0} \mu^{-1} \left( \lambda \mu \cdot h^*_{\lambda \mu} c \right) = \mu^{-1} c_{\mathrm{core}}. 
\]
\end{proof}

Next proposition can be proved in the same way.

\begin{proposition}\label{prop:c_0}
The limit
\[
\lim_{\lambda \to 0} \big(h_\lambda^* c - \lambda^{-1} \cdot c_{\mathrm{core}}\big)
\]
exists and defines a linear multiderivation $c_{\mathrm{lin}}$. \end{proposition}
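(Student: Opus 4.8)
The plan is to follow verbatim the strategy of the preceding proposition, the only new ingredient being that one keeps track of the next order term in the behaviour of $h_\lambda^* c$ near $\lambda = 0$. Accordingly, I would first prove existence of the limit and then check that the resulting object is linear, writing $c_{\mathrm{lin}} := \lim_{\lambda \to 0}\big(h_\lambda^* c - \lambda^{-1}\cdot c_{\mathrm{core}}\big)$ once existence is granted.

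For existence I would argue in local coordinates, exactly as in the previous proposition. The recurring structural fact is that there are no non-zero multiderivations of weight less than $-1$; concretely, this means that the singularity of $\lambda \mapsto h_\lambda^* c$ at $\lambda = 0$ is at worst a simple pole, whose residue is precisely the weight-$(-1)$ component $c_{\mathrm{core}}$. Subtracting $\lambda^{-1} c_{\mathrm{core}}$ therefore cancels this pole and leaves an expression that extends across $\lambda = 0$, the value at $\lambda = 0$ being the weight-$0$ part of $c$. Applying the same reasoning to the symbol, using $\sigma_{h_\lambda^* c} = h_\lambda^* \sigma_c$, shows that the symbol converges as well. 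Since each $h_\lambda^* c - \lambda^{-1} c_{\mathrm{core}}$ is a genuine $k$-derivation (difference of two $k$-derivations) and skew-symmetry, $\mathbb{R}$-multilinearity and the Leibniz rule are closed conditions, the coordinate-wise limit $c_{\mathrm{lin}}$ is again a $k$-derivation.

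It then remains to verify that $c_{\mathrm{lin}}$ has weight $0$, i.e.\ is linear. This is a short scaling computation in the same spirit as the previous proof, using continuity of $h_\mu^*$, the relation $h_\mu^* h_\lambda^* = h_{\lambda\mu}^*$, and the already established identity $h_\mu^* c_{\mathrm{core}} = \mu^{-1} c_{\mathrm{core}}$: for $\mu > 0$,
\begin{align*}
h_\mu^* c_{\mathrm{lin}} &= \lim_{\lambda \to 0}\big(h_\mu^* h_\lambda^* c - \lambda^{-1} h_\mu^* c_{\mathrm{core}}\big) = \lim_{\lambda \to 0}\big(h_{\lambda\mu}^* c - (\lambda\mu)^{-1} c_{\mathrm{core}}\big) \\
&= \lim_{\nu \to 0}\big(h_\nu^* c - \nu^{-1} c_{\mathrm{core}}\big) = c_{\mathrm{lin}},
\end{align*}
where I substituted $\nu = \lambda\mu$ and used $\lambda^{-1}\mu^{-1} = (\lambda\mu)^{-1}$. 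Hence $h_\mu^* c_{\mathrm{lin}} = c_{\mathrm{lin}}$ for all $\mu > 0$, which is exactly the condition that $c_{\mathrm{lin}}$ be of weight $0$, i.e.\ linear.

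The only genuinely computational step is the existence statement, and this is where I expect the main (if modest) obstacle to sit: one must confirm in coordinates that the singular behaviour of $h_\lambda^* c$ at $\lambda = 0$ is no worse than a simple pole with residue $c_{\mathrm{core}}$. Granting the absence of multiderivations of weight below $-1$, this reduces to isolating the two lowest homogeneous components of $c$, so it is entirely routine; the linearity of $c_{\mathrm{lin}}$ is then purely formal, as displayed above.
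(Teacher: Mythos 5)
Your proposal is correct and follows essentially the same route as the paper: the paper proves the preceding proposition by checking existence in coordinates and then performing the scaling computation with $h_\mu^* h_\lambda^* = h_{\lambda\mu}^*$, and simply remarks that the present statement "can be proved in the same way," which is exactly the computation you carry out (your identity $h_\mu^* c_{\mathrm{lin}} = c_{\mathrm{lin}}$ is the precise analogue of the paper's $h_\mu^* c_{\mathrm{core}} = \mu^{-1} c_{\mathrm{core}}$). Your additional remarks about the simple pole, the residue being $c_{\mathrm{core}}$, and the symbol are consistent elaborations of what the paper leaves implicit.
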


So far we have defined maps
\begin{equation}\label{eq:projs}
\begin{aligned}
\mathrm{core} & {} :  \mathfrak D^\bullet (W,E) \to \mathfrak D_{\mathrm{core}}^\bullet (W,E), \quad  c \mapsto \lim_{\lambda \to 0} \lambda \cdot  h_\lambda^* c \\
\mathrm{lin} & {} : \mathfrak D^\bullet (W,E) \to \mathfrak D_\mathrm{lin}^\bullet (W,E), \quad  c \mapsto \lim_{\lambda \to 0} \big(h_\lambda^* c - \lambda^{-1} \cdot c_{\mathrm{core}}\big)
\end{aligned}
\end{equation}
that split the inclusions in $\mathfrak D^\bullet (W,E)$. We call the latter the \emph{linearization map}.

\begin{remark} Once we have removed the singularity at 0, we can proceed as in (\ref{eq:proj_fun}) and define the projections on homogeneous multiderivation of positive weights $q > 0$:
\[
\mathrm{pr}_q: \mathfrak D^\bullet (W,E) \to \mathfrak D^\bullet_q (W,E), \quad c \mapsto \dfrac{1}{q!} \dfrac{d^q}{d \lambda^q} |_{\lambda = 0} (h_\lambda^* c - \lambda^{-1} \cdot c_{\mathrm{core}}).
\]
\end{remark}

Now, let $(W \Rightarrow E, A \Rightarrow M)$ be a VB-algebroid. Then we have a linearization map
\[
\mathrm{lin}: C^\bullet_{\mathrm{def}}(W) \to C^\bullet_{\mathrm{def,lin}}(W).
\]

\begin{theorem}[Linearization of deformation cochains]\label{theor:lin}
The linearization map is a cochain map splitting the inclusion $C_{\mathrm{def}, \mathrm{lin}}^{\bullet}(W) \hookrightarrow C^{\bullet}_{\mathrm{def}}(W)$. In particular there is a direct sum decomposition
\[
C^{\bullet}_{\mathrm{def}}(W) \cong C^{\bullet}_{\mathrm{def}, \mathrm{lin}}(W) \oplus \ker(\mathrm{lin})^\bullet.
\]
of cochain complexes. Hence, the inclusion of linear deformation cochains into deformation cochains induces an injection
\begin{equation}\label{eq:incl}
H^{\bullet}_{\mathrm{def}, \mathrm{lin}}(W) \hookrightarrow H^{\bullet}_{\mathrm{def}}(W).
\end{equation}
\end{theorem}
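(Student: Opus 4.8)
The plan is to reduce everything to one structural fact: the differential $\delta = \llbracket b_W, - \rrbracket$ preserves the weight grading on $\mathfrak{D}^\bullet(W,E)$. First I would record this. Since $b_W$ is a linear biderivation, we have $h_\lambda^* b_W = b_W$ for all $\lambda > 0$, and since the homogeneity action is a homomorphism for the Gerstenhaber bracket by \eqref{eq:pb_Gerst}, the bracket of a weight-$p$ and a weight-$q$ multiderivation is homogeneous of weight $p+q$. Taking $p = 0$ yields $h_\lambda^*(\delta c) = \llbracket b_W, h_\lambda^* c \rrbracket = \delta(h_\lambda^* c)$, i.e.~$\delta$ commutes with $h_\lambda^*$ for every $\lambda > 0$; equivalently, $\delta$ sends weight-$q$ multiderivations to weight-$q$ multiderivations.

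From this the cochain map property is immediate. As $\delta$ commutes with each $h_\lambda^*$ and is a fixed $\mathbb R$-linear operator, it passes through the defining limits: $\mathrm{core}(\delta c) = \lim_{\lambda \to 0} \lambda \cdot \delta(h_\lambda^* c) = \delta(c_{\mathrm{core}})$, and then $\mathrm{lin}(\delta c) = \lim_{\lambda \to 0}\big(\delta(h_\lambda^* c) - \lambda^{-1}\delta(c_{\mathrm{core}})\big) = \delta(\mathrm{lin}(c))$. Hence $\mathrm{lin}\circ\delta = \delta\circ\mathrm{lin}$. That $\mathrm{lin}$ splits the inclusion $\iota \colon C^\bullet_{\mathrm{def},\mathrm{lin}}(W) \hookrightarrow C^\bullet_{\mathrm{def}}(W)$ is a direct check: if $c$ is already linear then $h_\lambda^* c = c$, so $c_{\mathrm{core}} = \lim_{\lambda\to 0}\lambda c = 0$ and $\mathrm{lin}(c) = \lim_{\lambda\to 0} c = c$. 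Thus $\mathrm{lin}\circ\iota = \mathrm{id}$, so $\mathrm{lin}$ is an idempotent cochain map with image $C^\bullet_{\mathrm{def},\mathrm{lin}}(W)$, and the direct sum decomposition $C^\bullet_{\mathrm{def}}(W) = C^\bullet_{\mathrm{def},\mathrm{lin}}(W)\oplus\ker(\mathrm{lin})^\bullet$ of cochain complexes follows at once.

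The injection in cohomology is then purely formal: applying the cohomology functor to the chain-level identity $\mathrm{lin}\circ\iota = \mathrm{id}$ gives $\mathrm{lin}_*\circ\iota_* = \mathrm{id}$ on $H^\bullet_{\mathrm{def},\mathrm{lin}}(W)$, so $\iota_*$ has a left inverse and is therefore injective, establishing \eqref{eq:incl}.

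The one genuinely delicate point — the \emph{main obstacle} — is the interchange of $\delta$ with the $\lambda \to 0$ limits used in the second paragraph. This is exactly what the weight-grading observation of the first paragraph is designed to handle: rather than invoking any analytic continuity of $\delta$, one notes that $\mathrm{core}$ and $\mathrm{lin}$ are the projections onto the weight $-1$ and weight $0$ components, and $\delta$ commutes with each weight projection because it preserves the weight decomposition. If one prefers a hands-on justification, it suffices to observe that $\delta c$ evaluated on any tuple of sections is a fixed finite $\mathbb R$-linear combination of values of $c$ (and of its symbol) on sections and their brackets, so the pointwise convergence of $\lambda \cdot h_\lambda^* c$ and of $h_\lambda^* c - \lambda^{-1}c_{\mathrm{core}}$, guaranteed by the two preceding propositions, is preserved under $\delta$.
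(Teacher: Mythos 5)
Your proposal is correct and follows essentially the same route as the paper: both arguments reduce the theorem to the fact that $\delta = \llbracket b_W,-\rrbracket$ commutes with $h_\lambda^*$ (because $b_W$ is linear and $h_\lambda^*$ respects the Gerstenhaber bracket) and then pass $\delta$ through the two defining limits, the splitting property and the cohomological injection being formal consequences. Your extra justification of the limit interchange via weight projections, and the explicit check that $\mathrm{lin}\circ\iota=\mathrm{id}$, only make explicit what the paper leaves as "obvious".
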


\begin{proof} We only have to prove that the linearization preserves the differential $\delta = \llbracket b_W, - \rrbracket$ (here, as usual $b_W$ is the Lie bracket on sections of $W \Rightarrow E$). Using the fact that $b_W$ is linear, we have that $\delta$ commutes with $h_\lambda^*$. From (\ref{eq:diff}) it is obvious that $\delta$ preserves limits. So
\[
(\delta c)_{\mathrm{core}}  = \lim_{\lambda \to 0} \lambda \cdot h_\lambda^* (\delta c) = \lim_{\lambda \to 0} \lambda \cdot \delta(h_\lambda^* c) = \delta \left(\lim_{\lambda \to 0} \lambda \cdot h_\lambda^* c \right) = \delta c_{\mathrm{core}},
\]
and
\[
(\delta c)_\mathrm{lin}  = \lim_{\lambda \to 0} \left(h_\lambda^* (\delta c) - \lambda^{-1} \delta(c_{\mathrm{core}})\right) = \delta \left( \lim_{\lambda \to 0} (h_\lambda^* c - \lambda^{-1} \cdot c_{\mathrm{core}}) \right) = \delta c_{\mathrm{lin}},
\]
as desired. \end{proof}

The inclusion (\ref{eq:incl}) can be used to transfer vanishing results from deformation cohomology of the Lie algebroid $W \Rightarrow E$ to the linear deformation cohomology of the VB-algebroid $(W \Rightarrow E; A \Rightarrow M)$. For example, if $H^0_{\mathrm{def}}(W) = 0$, every Lie algebroid derivation of $W \Rightarrow E$ is inner, and hence every IM derivation of the VB-algebroid $W$ is inner. Similarly, if $W \Rightarrow E$ has no non-trivial infinitesimal deformations, so does $(W \Rightarrow E; A \Rightarrow M)$, and so on.

As a first example, consider a vector bundle $E \to M$. Then
\[
\xymatrix{
TE \ar[d] \ar@{=>}[r] & E \ar[d] \\
TM \ar@{=>}[r] & M}
\]
is a VB-algebroid.

\begin{proposition}\label{prop:TE}
The linear deformation cohomology of $(TE \Rightarrow E; TM \Rightarrow M)$ is trivial.
\end{proposition}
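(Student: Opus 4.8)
The plan is to exploit the powerful injection~\eqref{eq:incl} established in Theorem~\ref{theor:lin}, reducing the claim about the \emph{linear} deformation cohomology of the VB-algebroid $(TE \Rightarrow E; TM \Rightarrow M)$ to a statement about the \emph{ordinary} deformation cohomology of the top Lie algebroid $TE \Rightarrow E$. Indeed, since~\eqref{eq:incl} gives an injection $H^\bullet_{\mathrm{def},\mathrm{lin}}(TE) \hookrightarrow H^\bullet_{\mathrm{def}}(TE)$, it suffices to show that the target $H^\bullet_{\mathrm{def}}(TE)$ vanishes. The top algebroid here is the tangent algebroid $TE \Rightarrow E$ of the manifold $E$, whose anchor is the identity. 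The crucial observation is that for the tangent algebroid $TN \Rightarrow N$ of \emph{any} manifold $N$, the deformation complex is well understood: the deformation cohomology of a tangent Lie algebroid is trivial.

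Concretely, I would first recall from the graded-geometric picture developed earlier (the isomorphism $C^\bullet_{\mathrm{def}}(A) \cong \mathfrak{X}(A[1])^\bullet$) that $C^\bullet_{\mathrm{def}}(TE) \cong \mathfrak{X}((TE)[1])^\bullet$. For a tangent algebroid $TN \Rightarrow N$ the shifted DG-manifold $(TN)[1]$ is just $T[1]N$, whose algebra of functions is the de~Rham complex $(\Omega^\bullet(N), d_{\mathrm{dR}})$ and whose homological vector field is the de~Rham differential. The deformation complex of $TN$ is thus the complex of graded derivations of $(\Omega^\bullet(N), d_{\mathrm{dR}})$ with differential $[d_{\mathrm{dR}}, -]$. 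By Cartan calculus, every such derivation decomposes (via the Frölicher--Nijenhuis / derivation-of-forms decomposition) into a part of Lie-derivation type $L_K$ and a part of interior-product type $i_L$, and the differential $[d_{\mathrm{dR}}, -]$ relates the two pieces in a way that makes the complex acyclic. This is the key input: $H^\bullet_{\mathrm{def}}(TN) = 0$ for every manifold~$N$. Applying this with $N = E$ gives $H^\bullet_{\mathrm{def}}(TE) = 0$, and feeding this into the injection~\eqref{eq:incl} forces $H^\bullet_{\mathrm{def},\mathrm{lin}}(TE) = 0$, which is exactly the assertion.

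The \textbf{main obstacle} is justifying that $H^\bullet_{\mathrm{def}}(TN)=0$ with enough rigor to be citable rather than hand-waved. One clean route is to produce an explicit contracting homotopy on $\mathfrak{X}(T[1]N)^\bullet = \operatorname{Der}(\Omega^\bullet(N))$: using the standard result that any graded derivation $D$ of the de~Rham algebra is uniquely $D = L_K + i_L$ for vector-valued forms $K,L$, together with the Frölicher--Nijenhuis identities $[d_{\mathrm{dR}}, i_L] = L_L - (-1)^{\deg L} i_{d_{\mathrm{dR}} L}$ and $[d_{\mathrm{dR}}, L_K] = \pm L_{d_{\mathrm{dR}} K}$, one can show directly that a $[d_{\mathrm{dR}},-]$-closed derivation is $[d_{\mathrm{dR}},-]$-exact. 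Alternatively, and perhaps more economically, one can observe that the identity derivation of $\Omega^\bullet(N)$ (the degree operator, or equivalently the Euler vector field on $T[1]N$) provides a homotopy operator: bracketing with $d_{\mathrm{dR}}$ against an appropriately normalized multiple of the interior Euler contraction trivializes the cohomology.

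A more conceptual alternative, \emph{avoiding} the tangent-algebroid input altogether, would be to build the contracting homotopy \emph{directly} on the linear deformation complex $C^\bullet_{\mathrm{def},\mathrm{lin}}(TE) \cong \mathfrak{X}_{\mathrm{lin}}((TE)[1]_E)^\bullet$, using the Euler vector field associated to the homogeneity structure of $TE \to TM$; but since the surrounding text is clearly steering toward reuse of Theorem~\ref{theor:lin} as its showcase application, I would present the reduction-via-injection argument as primary and relegate the acyclicity of $H^\bullet_{\mathrm{def}}(TN)$ to the cited Cartan-calculus fact. The only genuine care needed is bookkeeping the degree shift conventions between $\mathfrak{X}(A[1])^\bullet$ and the deformation complex so that the vanishing holds in \emph{every} degree, including the relevant low degrees.
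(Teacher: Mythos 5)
Your argument is exactly the paper's: apply the injection \eqref{eq:incl} from Theorem \ref{theor:lin} to reduce the claim to the vanishing of $H^\bullet_{\mathrm{def}}(TE)$ for the tangent algebroid $TE \Rightarrow E$, which the paper simply cites from \cite{crainic:def} rather than re-deriving via the Fr\"olicher--Nijenhuis decomposition. The proposal is correct and takes essentially the same approach.
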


\begin{proof}
From Theorem \ref{theor:lin}, $H^\bullet_{\mathrm{def}, \mathrm{lin}} (TE)$ embeds into the deformation cohomology $H^\bullet_{\mathrm{def}} (TE)$ of the tangent algebroid $TE \Rightarrow E$ which is trivial (see, e.g., \cite{crainic:def}).
\end{proof}

Other applications of Theorem \ref{theor:lin} will be considered in Section \ref{Sec:examples}.

Remember from Subsection \ref{sec:lin_def} that a linear deformation cochain $c \in C^k_{\mathrm{def}, \mathrm{lin}}(W)$ is completely determined by its action on $k$ linear sections and on $k-1$ linear sections and a core section, and the action of its symbol on $k-1$ linear sections and on $k-2$ linear sections and a core section. We conclude this subsection providing a slightly more explicit description of the linearization map (\ref{eq:projs}) in terms of these restricted actions.

\begin{proposition} Let $c \in \mathfrak D^k (W;E)$. Then $c_{\mathrm{lin}}$ is completely determined by the following identities:
	\begin{enumerate}
		\item $c_{\mathrm{lin}}(\tilde \alpha_1, \dots, \tilde \alpha_k) = c(\tilde \alpha_1, \dots, \tilde \alpha_k)_{\mathrm{lin}}$,
		\item $c_{\mathrm{lin}}(\tilde \alpha_1, \dots, \tilde \alpha_{k-1}, \overline \chi) = c(\tilde \alpha_1, \dots, \tilde \alpha_{k-1}, \overline \chi)_{\mathrm{core}}$,
		\item $\sigma_{c_{\mathrm{lin}}}(\tilde \alpha_1, \dots, \tilde \alpha_{k-1}) = \sigma_c (\tilde \alpha_1, \dots, \tilde \alpha_{k-1})_{\mathrm{lin}}$,
		\item $\sigma_{c_{\mathrm{lin}}}(\tilde \alpha_1, \dots, \tilde \alpha_{k-2}, \overline \chi) = \sigma_c (\tilde \alpha_1, \dots, \tilde \alpha_{k-2}, \overline \chi)_{\mathrm{core}}$,
	\end{enumerate}
for all $\tilde \alpha_1, \dots, \tilde \alpha_k \in \Gamma_{\mathrm{lin}}(W,E)$, $\overline \chi \in \Gamma_{\mathrm{core}}(W,E)$.
\end{proposition}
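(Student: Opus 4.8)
The plan is to combine the uniqueness statement for linear multiderivations from Proposition \ref{prop:lin_der} with the explicit limit definition of the linearization map in (\ref{eq:projs}). Since $c_{\mathrm{lin}}$ is a linear $k$-derivation by construction, the uniqueness remark following Proposition \ref{prop:lin_der} tells us it is completely determined by exactly the four pieces of data appearing on the left-hand sides of (1)--(4): its value on $k$ linear sections, its value on $k-1$ linear sections and a core section, and the two corresponding values of its symbol. So it suffices to compute these four restricted actions directly from $c_{\mathrm{lin}} = \lim_{\lambda \to 0}\big(h_\lambda^* c - \lambda^{-1} c_{\mathrm{core}}\big)$ and to match them against the right-hand sides.

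First I would record how $h_\lambda^*$ acts on generators. By Lemma \ref{prop:char_lin} we have $h_\lambda^* \tilde\alpha = \tilde\alpha$ and $h_\lambda^* \overline{\chi} = \lambda^{-1}\overline{\chi}$ for $\lambda > 0$; since $h_\lambda^{-1} = h_{\lambda^{-1}}$, the action formula (\ref{eq:phi^*}) then gives
\[
(h_\lambda^* c)(\tilde\alpha_1, \dots, \tilde\alpha_k) = h_\lambda^*\big(c(\tilde\alpha_1, \dots, \tilde\alpha_k)\big), \qquad (h_\lambda^* c)(\tilde\alpha_1, \dots, \tilde\alpha_{k-1}, \overline{\chi}) = \lambda\, h_\lambda^*\big(c(\tilde\alpha_1, \dots, \tilde\alpha_{k-1}, \overline{\chi})\big),
\]
the extra factor $\lambda$ in the second identity coming from $h_{\lambda^{-1}}^* \overline{\chi} = \lambda\,\overline{\chi}$ together with $\mathbb R$-multilinearity. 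By (\ref{eq:phi^*sigma}) the analogous two identities hold after replacing $c$ by $\sigma_c$ and reading $h_\lambda^*$ as the pullback of vector fields on $E$. I would also use that every section $w$ of $W \to E$ (and every vector field on $E$) has a weight decomposition with lowest weight $-1$, so that $h_\lambda^* w = \lambda^{-1} w_{\mathrm{core}} + w_{\mathrm{lin}} + O(\lambda)$, where $w_{\mathrm{core}}$ and $w_{\mathrm{lin}}$ are its weight $-1$ and weight $0$ components; this is exactly the $k=0$ instance of (\ref{eq:projs}) and is the meaning of the subscripts on the right-hand sides of the Proposition.

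Now I would substitute these into the limits. For (1) and (3) all section arguments are linear, no factor of $\lambda$ appears, and the subtraction of $\lambda^{-1} c_{\mathrm{core}}$ precisely cancels the pole of $h_\lambda^* w$ (resp.\ $h_\lambda^* X$ with $X = \sigma_c(\tilde\alpha_1,\dots,\tilde\alpha_{k-1})$), leaving the weight $0$ part $w_{\mathrm{lin}}$ (resp.\ $X_{\mathrm{lin}}$) in the limit. For (2) and (4) one argument is a core section, so the extra factor $\lambda$ forces $c_{\mathrm{core}}$ to vanish on such a tuple (its value would sit in weight $-2$, and there are no sections of weight less than $-1$), whence the singular correction term drops out and the surviving expression $\lambda\, h_\lambda^* w \to w_{\mathrm{core}}$ (resp.\ $\lambda\, h_\lambda^* Y \to Y_{\mathrm{core}}$ with $Y = \sigma_c(\tilde\alpha_1,\dots,\tilde\alpha_{k-2},\overline{\chi})$) produces exactly the core parts on the right-hand sides.

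The only genuinely delicate point is interchanging the limit $\lambda \to 0$ with evaluation on fixed sections and with passage to the symbol; I would justify this exactly as in the proofs of the two preceding propositions, by working in local coordinates where the limits are known to exist and where both evaluation and the symbol map are continuous linear operations. Everything else is bookkeeping of weights, and the one fact worth flagging explicitly is the vanishing of $c_{\mathrm{core}}$ on any tuple containing a core argument, since this is what renders the singular correction term harmless and makes identities (2) and (4) come out as stated.
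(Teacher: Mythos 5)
Your proposal is correct and follows essentially the same route as the paper's proof: evaluate the defining limits of $c_{\mathrm{core}}$ and $c_{\mathrm{lin}}$ on tuples of linear and core sections, using $h_{\lambda^{-1}}^{*}\tilde\alpha=\tilde\alpha$, $h_{\lambda^{-1}}^{*}\overline\chi=\lambda\overline\chi$, and $\sigma_{h_\lambda^{*}c}=h_\lambda^{*}\sigma_c$, then identify the surviving terms with the weight $-1$ and weight $0$ components of the resulting sections and vector fields. Your explicit observation that $c_{\mathrm{core}}$ vanishes on tuples containing a core argument (so the singular correction term drops out in (2) and (4)) is exactly the mechanism implicit in the paper's ``proved in a similar way'' steps, and the appeal to Proposition \ref{prop:lin_der} for the uniqueness claim matches the paper's setup.
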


\begin{proof} We first compute
\[
c_\mathrm{core}(\tilde \alpha_1, \dots, \tilde \alpha_k) = \lim_{\lambda \to 0} \lambda \cdot (h_\lambda^* c) (\tilde \alpha_1, \dots, \tilde \alpha_k) = \lim_{\lambda \to 0} \lambda \ h_\lambda^*(c(\tilde \alpha_1, \dots, \tilde \alpha_k)) = c(\tilde \alpha_1, \dots, \tilde \alpha_k)_{\mathrm{core}}.
\]
Then
\[
\begin{aligned}
c_\mathrm{lin}(\tilde \alpha_1, \dots, \tilde \alpha_k) & = \lim_{\lambda \to 0} \big( h_\lambda^* c - \lambda^{-1} \cdot c_\mathrm{core} \big) (\tilde \alpha_1, \dots, \tilde \alpha_k) \\ 
& = \lim_{\lambda \to 0} \left( h_\lambda^*(c(\tilde \alpha_1, \dots, \tilde \alpha_k)) - \lambda^{-1} c(\tilde \alpha_1, \dots, \tilde \alpha_k)_\mathrm{core} \right) = \\
& = c(\tilde \alpha_1, \dots, \tilde \alpha_k)_\mathrm{lin}.
\end{aligned}
\]
Identity (2) in the statement can be proved in a similar way. To prove (3) first notice that
\[
\sigma_{c_{\mathrm{core}}} = \sigma_{\lim_{\lambda \to 0} \lambda \cdot h_\lambda^\ast c} = \lim_{\lambda \to 0} \lambda \cdot \sigma_{h_\lambda^\ast c} = \lim_{\lambda \to 0} \lambda \cdot h_\lambda^\ast \sigma_{ c},
\]
where we used (\ref{eq:phi^*sigma}). Hence
\[
\begin{aligned}
\sigma_{c_{\mathrm{core}}} (\tilde \alpha_1, \ldots, \tilde \alpha_{k-1}) & =  \lim_{\lambda \to 0} (\lambda \cdot h_\lambda^\ast \sigma_{ c} )(\tilde \alpha_1, \ldots, \tilde \alpha_{k-1}) \\
& = \lim_{\lambda \to 0} (\lambda \cdot h_\lambda^\ast (\sigma_{ c} (\tilde \alpha_1, \ldots, \tilde \alpha_{k-1}))) \\
& = \sigma_{ c} (\tilde \alpha_1, \ldots, \tilde \alpha_{k-1})_{\mathrm{core}}.
\end{aligned}
\]
Similarly,
\[
\begin{aligned}
\sigma_{c_{\mathrm{lin}}} & = \sigma_{\lim_{\lambda \to 0} \left(h_\lambda^\ast c - \lambda^{-1} \cdot c_{\mathrm{core}} \right)} \\
& = \lim_{\lambda \to 0} \sigma_{h_\lambda^\ast c - \lambda^{-1} \cdot c_{\mathrm{core}}} \\
&= \lim_{\lambda \to 0} \left(h_\lambda^\ast\sigma_{ c } - \lambda^{-1} \sigma_{c_{\mathrm{core}}}\right),
\end{aligned}
\]
hence
\[
\begin{aligned}
\sigma_{c_{\mathrm{lin}}} (\tilde \alpha_1, \ldots, \tilde \alpha_{k-1}) & = \lim_{\lambda \to 0} \left((h_\lambda^\ast\sigma_{ c })(\tilde \alpha_1, \ldots, \tilde \alpha_{k-1}) - \lambda^{-1} \sigma_{c_{\mathrm{core}}}(\tilde \alpha_1, \ldots, \tilde \alpha_{k-1})\right) \\
& = \lim_{\lambda \to 0} \left(h_\lambda^\ast (\sigma_{ c }(\tilde \alpha_1, \ldots, \tilde \alpha_{k-1})) - \lambda^{-1} \sigma_{c}(\tilde \alpha_1, \ldots, \tilde \alpha_{k-1})_{\mathrm{core}}\right) \\
& = \sigma_{c} (\tilde \alpha_1, \ldots, \tilde \alpha_{k-1})_{\mathrm{lin}}
\end{aligned}.
\]
Identity (4) can be proved in a similar way. \end{proof}


\section{Examples and applications}\label{Sec:examples}
 
In this section we provide several examples. Examples in Subsections
\ref{sec:VB-alg}, \ref{sec:partial_conn} and \ref{sec:Lie_vect} parallel the analogous examples in \cite{crainic:def}, connecting our linear deformation cohomology to known cohomologies. Examples in 
Subsections \ref{sec:LA-vect}, \ref{sec:tangent-VB} and \ref{sec:type_1} are specific 
to VB-algebroids. 

\subsection{VB-algebras}\label{sec:VB-alg}

A \emph{VB-algebra} is a \emph{vector bundle object in the category of Lie algebras}. In other words, it is a VB-algebroid of the form
\[
\begin{array}{c}
\xymatrix{\mathfrak h \ar@{=>}[r] \ar[d] & \{0\} \ar[d] \\
 \mathfrak g \ar@{=>}[r]  & \{ \ast \}}
 \end{array}.
\]
In particular, $\mathfrak h$ and $\mathfrak g$ are Lie algebras. Now, let $C := \ker (\mathfrak h \to \mathfrak g)$ be the core of $(\mathfrak h \Rightarrow \{0\}; \mathfrak g \Rightarrow \{ \ast \})$. It easily follows from the definition of VB-algebroid that
\begin{itemize}
\item $C$ is a representation of $\mathfrak g$, and
\item $\mathfrak h = \mathfrak g \ltimes C$ is the semidirect product Lie algebra,
\item $\mathfrak h = \mathfrak g \ltimes C \to \mathfrak g$ is the projection onto the first factor.
\end{itemize}

Let $\operatorname{End} C$ denote endomorphisms of the vector space $C$. In the present case, the short exact sequence (\ref{eq:SESDGLAs}) reads
\begin{equation}\label{eq:seq_vb-alg}
0 \to C^\bullet(\mathfrak g, \operatorname{End} C) \to C^\bullet_{\mathrm{def,lin}}(\mathfrak h) \to C^\bullet_{\mathrm{def}}(\mathfrak g) \to 0,
\end{equation}
where $C^\bullet(\mathfrak g, \operatorname{End} C) = \wedge^{\bullet} \mathfrak g^* \otimes \operatorname{End} C$ is the Chevalley-Eilenberg complex of $\mathfrak g$ with coefficients in the induced representation $\operatorname{End} C$, and $C^\bullet_{\mathrm{def}}(\mathfrak g) = (\wedge^{\bullet} \mathfrak g^* \otimes \mathfrak g)[1]$ is the Chevalley-Eilenberg complex with coefficients in the adjoint representation. From the classical theory of Nijenhuis and Richardson \cite{nijenhuis:def_lie, nijenhuis:coh, nijenhuis:def}, the latter controls deformations of $\mathfrak g$, while the former controls deformations of the representation of $\mathfrak g$ on $C$.

The sequence (\ref{eq:seq_vb-alg}) has a natural splitting in the category of graded Lie algebras. Namely, there is an obvious graded Lie algebra map
\[
C^\bullet_{\mathrm{def}}(\mathfrak g) \to C^\bullet_{\mathrm{def,lin}}(\mathfrak h), \quad c \mapsto \tilde c
\]
given by
\[
\tilde c (v_1 + \chi_1, \ldots, v_{k+1} + \chi_{k+1}) := c (v_1, \ldots, v_{k+1})
\]
for all $c \in C^k_{\mathrm{def}}(\mathfrak g) = \wedge^{k+1} \mathfrak g^\ast \otimes \mathfrak g$, and all $v_i + \chi_i \in \mathfrak h = \mathfrak g \oplus C$, $i = 1, \ldots, k+1$. It is clear that the inclusion $C^\bullet_{\mathrm{def}}(\mathfrak g) \to C^\bullet_{\mathrm{def,lin}}(\mathfrak h)$ splits the projection $C^\bullet_{\mathrm{def,lin}}(\mathfrak h) \to C^\bullet_{\mathrm{def}}(\mathfrak g)$. Hence 
\begin{equation}\label{eq:split_VB_alg}
C^\bullet_{\mathrm{def,lin}}(\mathfrak h) \cong C^\bullet(\mathfrak g, \operatorname{End} C) \oplus C^\bullet_{\mathrm{def}}(\mathfrak g).
\end{equation}
as graded Lie algebras. However (\ref{eq:split_VB_alg}) is not a DGLA isomorphism. We now describe the differential $\delta$ in $C^\bullet_{\mathrm{def,lin}}(\mathfrak h)$ in terms of the splitting
(\ref{eq:split_VB_alg}). First of all, denote by $\theta : \mathfrak g \to \operatorname{\mathfrak{End}} C$ the action of $\mathfrak g$ on $C$, and let
\[
\Theta : \wedge^\bullet \mathfrak g^\ast \otimes \mathfrak g \to \wedge^\bullet \mathfrak g^\ast \otimes \operatorname{End} C,
\]
be the map obtained from $\theta$ by extension of scalars. From the properties of the action, $\Theta$ is actually a cochain map
\[
\Theta : C^\bullet_{\mathrm{def}}(\mathfrak g)[-1] \to  C^\bullet(\mathfrak g, \operatorname{End} C).
\]
Finally, a direct computation, reveals that the isomorphism (\ref{eq:split_VB_alg}) identifies the differential in 
$
C^\bullet_{\mathrm{def,lin}}(\mathfrak h)
$ 
with that of the mapping cone
$
\operatorname{Cone} (\Theta)
$:
\[
(C^\bullet_{\mathrm{def,lin}}(\mathfrak h), \delta) \cong \operatorname{Cone} (\Theta)
\]
as cochain complexes. Notice that the long exact cohomology sequence of the mapping cone is just (\ref{eq:LES_VB}).

\subsection{LA-vector spaces}\label{sec:LA-vect}
An \emph{LA vector space} is a \emph{Lie algebroid object in the category of vector spaces}. In other words, it is a VB-algebroid of the form
\[
\begin{array}{c}
\xymatrix{W \ar@{=>}[r] \ar[d] & E \ar[d] \\
\{0\} \ar@{=>}[r]  & \{ \ast \}}
 \end{array}.
\]
In particular, $W$ and $E$ are vector spaces. Now, let $C := \ker (W \to E)$ be the core of $(W \Rightarrow E; \{0\} \Rightarrow \{\ast\})$. It easily follows from the definition of VB-algebroid that $W$ identifies canonically with the direct sum $C \oplus E$ and all the structure maps are completely determined by a linear map $\partial : C \to E$. Specifically, sections of $W \to E$ are the same as smooth maps $E \to C$, and given a basis $(C_I)$ of $C$, the Lie bracket on maps $E \to C$ is given by
\begin{equation}\label{eq:LA-vect1}
[f, g] = f^I (\partial C_I)^\uparrow g - g^I (\partial C_I)^\uparrow f, 
\end{equation}
where $f = f^I C_I$ and $g = g^I C_I$. It follows that the anchor $\rho : W \to TE$ is given on sections by
\begin{equation}\label{eq:LA-vect2}
\rho (f) = f^I (\partial C_I)^\uparrow.
\end{equation}

Linear deformations of $(W \Rightarrow E; \{0\} \Rightarrow \{\ast\})$ are the same as deformations of $\partial$ as a linear map. Let us describe the linear deformation complex explicitly. 
As the bottom Lie algebroid is trivial, $C^\bullet_{\mathrm{def}, \mathrm{lin}} (W)$ consists of graded endomorphisms $\operatorname{End} (C[1] \oplus E)^{\bullet}$ of the graded vector space $W[1]_E = C[1] \oplus E$. From  (\ref{eq:LA-vect1}) and  (\ref{eq:LA-vect2}) the differential $\delta$ in $\operatorname{End} (C[1] \oplus E)^{\bullet}$ is just the commutator with $\partial$, meaning that the deformation cohomology consists of homotopy classes of graded cochain maps $(C[1] \oplus E, \partial) \to (C[1] \oplus E, \partial)$. More explicitly, $(\operatorname{End} (C[1] \oplus E)^{\bullet}, \delta)$ is concentrated in degrees $-1, 0, 1$. Namely, it is
\[
0 \longrightarrow \operatorname{Hom} (E, C)[1] \overset{\delta_0}{\longrightarrow}  \operatorname{End} (C) \oplus \operatorname{End}  (E)  \overset{\delta_1}{\longrightarrow}  \operatorname{Hom} (C, E)[-1] \longrightarrow 0,
\]
where $\delta_0$ and $\delta_1$ are given by:
\[
\begin{aligned}
\delta_0 \phi & = (\phi \circ \partial, \partial \circ \phi), \\
\delta_1 (\psi_C, \psi_E) & = \partial \circ \psi_C - \psi_E \circ \partial,
\end{aligned}
\]
where $\phi \in \operatorname{Hom} (E, C)$, $\psi_C \in \operatorname{End} (C)$ and $\psi_E \in \operatorname{End} (E)$. We conclude immediately that 
\[
H^\bullet_{\mathrm{def}, \mathrm{lin}} (W) = \operatorname{End} (\operatorname{coker} \partial \oplus \ker \partial [1])^{\bullet}
\]
i.e.
\[
\begin{aligned}
H^{-1}_{\mathrm{def}, \mathrm{lin}} (W) & = \operatorname{Hom} (\operatorname{coker} \partial, \ker \partial),\\
H^0_{\mathrm{def}, \mathrm{lin}} (W) & = \operatorname{End} (\operatorname{coker} \partial) \oplus \operatorname{End} (\ker \partial), \\
H^1_{\mathrm{def}, \mathrm{lin}} (W) & = \operatorname{Hom}(\ker \partial, \operatorname{coker} \partial). 
\end{aligned}
\]
This shows, for instance, that infinitesimal deformations of a linear map $\partial : C \to E$ are all trivial if and only if $\partial$ is injective or surjective, as expected.
 
\subsection{Tangent and cotangent VB-algebroids}\label{sec:tangent-VB}

Let $A \Rightarrow M$ be a Lie algebroid.   Then $(TA \Rightarrow TM; A \Rightarrow M)$ is a VB-algebroid, called the \emph{tangent VB-algebroid} of $A$. The structure maps of the Lie algebroid $TA \Rightarrow TM$ are defined as follows. First of all recall that $(TA \to TM; A \to M)$ is a DVB whose core is canonically isomorphic to $A$ itself. In particular, any section $\alpha$ of $A$ determines a core section $\overline \alpha$ of $TA \to TM$. A section $\alpha$ of $A$ does also determine a linear section $T \alpha$ of $TA \to TM$: its tangent map.

Denote by $\tau : TM \to M$ the projection. In the following, for a vector field $X \in \mathfrak X (M)$, we denote by $X_{\mathrm{tan}} \in \mathfrak X (TM)$ its \emph{tangent lift}. By definition, the flow of $X_{\mathrm{tan}}$ is obtained from the flow of $X$ by taking the tangent diffeomorphisms. Equivalently, $X_{\mathrm{tan}}$ is the (linear) vector field on $TM$ uniquely determined by
\begin{equation}\label{eq:tan_lift}
X_{\mathrm{tan}}(\ell_{df}) = \ell_{d X(f)}, \quad \text{and} \quad X_{\mathrm{tan}}(\tau^\ast) = \tau^\ast X(f),
\end{equation}
for all $f \in C^\infty (M)$. Here $\ell_{df}$ is the fiber-wise linear function on $TM$ corresponding to the $1$-form $df$ (viewed as a section of the dual bundle $T^\ast M$). Notice that Formulas (\ref{eq:tan_lift}) can be used to defined the tangent lift of vector fields on a graded manifold. This will be indeed useful below.

Now we come back to the tangent VB-algebroid $(TA \Rightarrow TM; A \Rightarrow M)$. The anchor $\rho_{TA} : TA \to TTM$ is determined by
\begin{equation}\label{eq:anchor_TA}
\rho_{TA}(T\alpha) = \rho(\alpha)_{\operatorname{tan}}, \quad \rho_{TA}(\overline \alpha) = \rho(\alpha)^{\uparrow}.
\end{equation}
and the bracket $[-,-]_{TA}$ in $\Gamma (TA, TM)$ is completely determined by:
\begin{equation}\label{eq:bracket_TA}
[T\alpha, T\beta]_{TA} = T[\alpha,\beta], \quad  [T\alpha, \overline \beta]_{TA} = \overline{[\alpha, \beta]}, \quad [\overline \alpha, \overline \beta]_{TA} = 0,
\end{equation}
for all $\alpha, \beta \in \Gamma(A)$. 
The dual VB-algebroid $(T^\ast A \Rightarrow A^\ast; A \Rightarrow M)$ of the tangent VB-algebroid is called the \emph{cotangent VB-algebroid}. We want to discuss the linear deformation cohomology of $(TA \Rightarrow TM; A \Rightarrow M)$ (hence of $(T^\ast A \Rightarrow A^\ast; A \Rightarrow M)$). We use the graded geometric description. Deformation cochains of $TA \Rightarrow TM$ are vector fields on the graded manifold $TA [1]_{TM}$ obtained from $TA$ shifting by one the degree in the fibers of the vector bundle $TA \to TM$. Linear deformation cochains are vector fields that are linear with respect to the vector bundle structure $TA[1]_{TM} \to A[1]$. 
\begin{lemma}\label{lem:iota}
	Let $T A[1]$ be the tangent bundle of $A[1]$ and let $\tau : TA[1] \to A[1]$ be the projection. There is a canonical isomorphism of vector bundles of graded manifolds
	\[
	\xymatrix{ TA [1]_{TM} \ar[rr]^\iota \ar[dr]& & TA[1] \ar[dl] \\
		& A[1] &}
	\]
	uniquely determined by the following condition: 
	\begin{equation}\label{eq:iota}
	\langle \iota^\ast \ell_{d \omega}, T\alpha_1 \wedge \cdots \wedge T\alpha_k \rangle = \ell_{d \langle \omega , \alpha_1 \wedge \cdots \wedge \alpha_k \rangle}
	\end{equation}
	for all $\omega \in C^\infty (A[1])^\bullet = C^\bullet (A)$ of degree $k$, all sections $\alpha_1, \ldots, \alpha_k \in \Gamma (A)$, and all $k$. Additionally
	\begin{equation}\label{eq:iota_2}
	\langle \iota^\ast \ell_{d \omega}, T\alpha_1 \wedge \cdots \wedge T\alpha_{k-1} \wedge \overline \alpha_k \rangle = \tau^\ast \langle \omega , \alpha_1 \wedge \cdots \wedge \alpha_k \rangle.
	\end{equation}
\end{lemma}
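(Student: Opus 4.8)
The plan is to construct $\iota$ through its transpose $\iota^\ast$ on fiber-wise linear functions, and then to check that the prescription (\ref{eq:iota}) is forced, consistent, and invertible. Recall that a morphism of vector bundles of graded manifolds covering $\operatorname{id}_{A[1]}$ is the same datum as a $C^\infty(A[1])^\bullet$-linear map between the modules of fiber-wise linear functions (i.e.\ of sections of the duals), and it is an isomorphism precisely when that map is. So it suffices to define $\iota^\ast$ on fiber-wise linear functions of $TA[1]$. These are generated, over $C^\infty(A[1])^\bullet$, by the exact ones $\ell_{d\omega}$, $\omega\in C^\infty(A[1])^\bullet$, because the de Rham differential of the graded manifold $A[1]$ generates $\Omega^1(A[1])$ (locally $dx^i, d\xi^a$ form a frame, and these are de Rham differentials of coordinate functions). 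Hence it is enough to prescribe each $\iota^\ast\ell_{d\omega}$, which is exactly what (\ref{eq:iota}) does.

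I would next argue that (\ref{eq:iota}) \emph{alone} determines each $\iota^\ast\ell_{d\omega}$ completely. Although $\iota^\ast\ell_{d\omega}$ is a fiber-wise linear function on $TA[1]_{TM}$ and the module of sections of $TA[1]_{TM}\to A[1]$ is spanned (cf.\ Subsection \ref{Sec:deformations2} and Lemma \ref{prop:char_lin}) by the linear sections $T\alpha$ \emph{and} the core sections $\overline\alpha$, so that a priori one also needs the core evaluations, the linear sections already carry the full $1$-jet of $\alpha$: in an adapted frame $T\alpha$ is given by $\xi^a=\alpha^a$, $\dot\xi^a=\partial_i\alpha^a\,\dot x^i$, with $\dot x^i$ unconstrained. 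Therefore, letting the $\alpha_i$ vary in (\ref{eq:iota}), the right-hand side separates the value $\alpha_i^a$ from its derivative $\partial_i\alpha_i^a$, and this pins down both the $\dot x^i$-component and the $\dot\xi^a$-component of $\iota^\ast\ell_{d\omega}$ without any further input. Concretely, on coordinate functions one reads off $\iota^\ast\ell_{dx^i}=\ell_{dx^i}$ and $\iota^\ast\ell_{d\xi^a}=\ell_{d\xi^a}$; that is, in adapted coordinates $\iota$ is the tautological identification $\dot x^i\mapsto\dot x^i$, $\dot\xi^a\mapsto\dot\xi^a$ of the (degree $0$, resp.\ degree $1$) fibers. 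In particular $\iota$ is a well-defined isomorphism of vector bundles of graded manifolds over $A[1]$.

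To turn this local description into the asserted canonical statement, I would verify that the assignment $\ell_{d\omega}\mapsto\iota^\ast\ell_{d\omega}$ given by (\ref{eq:iota}) is $\R$-linear and compatible with the $C^\infty(A[1])^\bullet$-module structure, i.e.\ with the Leibniz rule $d(\omega\omega')=(d\omega)\,\omega'\pm\omega\,(d\omega')$ for the de Rham differential of $A[1]$; since (\ref{eq:iota}) is manifestly frame-independent, the locally constructed isomorphisms then glue to a single canonical $\iota$. Finally, (\ref{eq:iota_2}) is not independent input: once $\iota^\ast\ell_{d\omega}$ has been fixed by (\ref{eq:iota}), evaluating it on $k-1$ linear sections and a core section $\overline\alpha$ (given in the adapted frame by $\xi^a=0$, $\dot\xi^a=\alpha^a$) substitutes into the already-determined function $\iota^\ast\ell_{d\omega}=\ell_{d\omega}$ and produces $\tau^\ast\langle\omega,\alpha_1\wedge\cdots\wedge\alpha_k\rangle$, which is precisely (\ref{eq:iota_2}). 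This fiber-constant answer, as opposed to the $\dot x$-linear answer of (\ref{eq:iota}), is exactly the split between ``value'' and ``symbol'' familiar from Proposition \ref{prop:lin_der}.

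The step I expect to be the main obstacle is the compatibility with the Leibniz rule. One must check that evaluating $\iota^\ast\ell_{d(\omega\omega')}$ on $k+k'$ linear sections, distributing those sections between the two factors with the correct shuffle signs (remember the $\xi^a$ are odd), reproduces $d\langle\omega\omega',\alpha_1\wedge\cdots\wedge\alpha_{k+k'}\rangle$ via the ordinary Leibniz rule on $M$. This is a bookkeeping computation with unshuffles of the same flavor as the ones behind (\ref{eq:def_der}); the entire content is matching the graded antisymmetrization on the $TA[1]$ side with the contraction $\langle\omega,\alpha_1\wedge\cdots\wedge\alpha_k\rangle$ on the $A$-form side, and everything else is formal.
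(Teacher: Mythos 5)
Your proposal is correct and follows essentially the same route as the paper: define $\iota$ in adapted coordinates as the tautological identification $\dot x{}^i \mapsto \dot x{}^i$, $\tilde{\dot u}{}^\alpha \mapsto \tilde{\dot u}{}^\alpha$, verify (\ref{eq:iota}) by the same direct computation, obtain (\ref{eq:iota_2}) as a consequence, and get uniqueness by specializing (\ref{eq:iota}) to the coordinate functions (equivalently, by your $1$-jet argument that linear sections $T\alpha$ already separate the $\dot x$- and $\tilde{\dot u}$-components). The only presentational difference is that you argue globality via consistency of the frame-independent prescription with the $C^\infty(A[1])^\bullet$-module structure and the Leibniz rule, where the paper checks transition maps directly; both amount to the same computation.
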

Formulas (\ref{eq:iota}) and (\ref{eq:iota_2}) in the statement require some explanations. The expression $d \omega$ in the left hand side should be interpreted as a $1$-form on $A[1]$, the de Rham differential of the function $\omega$, and $\ell_{d \omega}$ is the associated fiber-wise linear function on $T A[1]$. The pull-back of $\ell_{d\omega}$ along $\iota^\ast$ is a function on $TA[1]_{TM}$, i.e.~a $C^\infty (TM)$-valued, skew-symmetric multilinear map on sections of $TA \to TM$. The $T\alpha$ are the tangent maps $T\alpha : TM \to TA$ of the $\alpha : M \to A$. In particular they are linear sections of $TA \to TM$. The right hand side of (\ref{eq:iota}) is the fiber-wise linear function on $TM$ corresponding to the $1$-form $d \langle \omega , \alpha_1 \wedge \cdots \wedge \alpha_k \rangle$ on $M$. Here we interpret $\omega$ as a skew-symmetric multilinear map on sections of $A$.

\begin{proof}[Proof of Lemma \ref{lem:iota}]
	Let $(x^i)$ be coordinates on $M$, let $(u_\alpha)$ be a local basis of $\Gamma (A)$, and let $(u^\alpha)$ be the associated fiber-wise linear coordinates on $A$. These data determine coordinates $(x^i, \tilde u{}^\alpha)$ on $A[1]$ in the obvious way. In particular the $x^i$ have degree $0$ and the $\tilde u{}^\alpha$ have degree $1$. We also consider standard coordinates $(x^i, u^\alpha, \dot{x}{}^i, \dot{u}{}^\alpha)$ induced by $(x^i, u^\alpha)$ on $TA$. Notice that $(u^\alpha, \dot{u}{}^\alpha)$ are fiber-wise linear coordinates with respect to the vector bundle structure $TA \to TM$. More precisely, they are the fiber-wise linear coordinates associated to the local basis $(T u_\alpha, \overline u_\alpha)$ of $\Gamma (TA, TM)$. Next we denote by $(x^i, \dot{x}{}^i, \tilde u{}^\alpha, \tilde{\dot{u}}{}^\alpha)$ the induced coordinates on $TA[1]_{TM}$. They have degree $0,0,1,1$ respectively. Finally, we denote $(x^i, \tilde u{}^\alpha, X^i, \tilde U{}^\alpha)$ the standard coordinates on $TA[1]$ induced by $(x^i, \tilde u{}^\alpha)$. Define $\iota$ by putting
	\[
	\iota^\ast X^i = \dot{x}{}^i, \quad \text{and} \quad \iota^\ast \tilde U{}^\alpha = \tilde{\dot{u}}{}^\alpha.
	\]
	A direct computation exploiting the appropriate transition maps reveals that $\iota$ is globally well defined. Now we prove (\ref{eq:iota}). We work in coordinates. Take a degree $k$ function $\omega = f_{\alpha_1 \cdots \alpha_k} (x) \tilde u{}^{\alpha_1} \cdots \tilde u{}^{\alpha_k}$ on $A[1]$. A direct computation shows that
	\begin{equation}\label{eq:iota_l}
	\iota^\ast \ell_{d \omega} = \frac{\partial f_{\alpha_1 \cdots \alpha_k} }{\partial x^i}  \tilde u{}^{\alpha_1} \cdots \tilde u{}^{\alpha_k} \dot{x}{}^i + k f_{\alpha_1 \cdots \alpha_k} \tilde u{}^{\alpha_1} \cdots  \tilde u{}^{\alpha_{k-1}} \tilde{\dot{u}}{}^{\alpha_k}.
	\end{equation}
	Now, let $\alpha_1, \ldots, \alpha_k \in \Gamma (A)$, and $a = 1, \ldots, k$. If $\alpha_a$ is locally given by $\alpha_a = g_a^\alpha (x) u_\alpha$, then
	\[
	T\alpha_a = \frac{\partial g^\alpha_a}{\partial x^i} \dot{x}{}^i \overline u{}_\alpha + g^\alpha_a Tu_\alpha,
	\]
	and, from (\ref{eq:iota_l}),
	\[
	\begin{aligned}
	\langle \iota^\ast \ell_{d \omega}, T \alpha_1 \wedge \cdots \wedge T \alpha_k \rangle 
	&=  k! \left( \frac{\partial f_{\alpha_1 \cdots \alpha_k} }{\partial x^i}  g_1^{\alpha_1} \cdots g_k^{\alpha_k}+  f_{\alpha_1 \cdots \alpha_k} g_1^{\alpha_1} \cdots  g_{k-1}^{\alpha_{k-1}} \frac{\partial g^{\alpha_k}_k}{\partial x^i} \right)\dot{x}{}^i \\
	& = k! \frac{\partial}{\partial x^i} \left( f_{\alpha_1 \cdots \alpha_k} g_1^{\alpha_1} \cdots g_k^{\alpha_k} \right) \dot{x}{}^i  = \ell_{d \langle \omega , \alpha_1 \wedge \cdots \wedge \alpha_k \rangle} .
	\end{aligned}
	\]
	Identity (\ref{eq:iota_2}) is proved in a similar way. To see that there is no other vector bundle isomorphism $\iota : TA[1]_{TM} \to TA[1]$ with the same property
	(\ref{eq:iota}) notice that $X^i = \ell_{dx^i}$ and $\tilde U{}^\alpha = \ell_{d\tilde u^\alpha}$. Now use (\ref{eq:iota}) to show that $\iota^\ast X^i = \dot{x}{}^i$ and $\iota^\ast \tilde U{}^\alpha = \tilde{\dot{u}}{}^\alpha$, necessarily.
\end{proof}

In the following we will understand the isomorphism $\iota$ of Lemma \ref{lem:iota}, and identify $TA[1]_{TM}$ with $TA[1]$. Now, recall that $C^\bullet_{\mathrm{def}, \mathrm{lin}} (TA) = \mathfrak X_{\mathrm{lin}} (TA[1])^{\bullet}$ fits in the short exact sequence of DGLAs:
\begin{equation}\label{eq:SES_TA}
0 \to \operatorname{\mathfrak{End}}( TA[1])^\bullet \to \mathfrak X_{\mathrm{lin}} (TA[1])^{\bullet} \to \mathfrak X (A[1])^{\bullet} \to 0.
\end{equation}
The tangent lift 
\begin{equation}\label{eq:tan}
\operatorname{tan} : \mathfrak X (A[1])^{\bullet} \hookrightarrow \mathfrak X (TA[1])^{\bullet}, \quad X \mapsto X_{\mathrm{tan}}
\end{equation}
splits the sequence (\ref{eq:SES_TA}) in the category of DGLAs. As $\mathfrak X (A[1])^{\bullet} = C^\bullet_{\mathrm{def}}(A)$, we immediately have the following
\begin{proposition}
	For every Lie algebroid $A \Rightarrow M$ there is a direct sum decomposition
	\[
	H^\bullet_{\mathrm{def}, \mathrm{lin}} (TA) = H^\bullet_{\mathrm{def}, \mathrm{lin}} (T^\ast A) = H^\bullet (\operatorname{\mathfrak{End}} (TA[1])) \oplus H^\bullet_{\mathrm{def}} (A).
	\]
\end{proposition}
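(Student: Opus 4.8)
The plan is to reduce the statement to the splitting already recorded for the sequence (\ref{eq:SES_TA}) together with one application of the duality isomorphism. First, applying the canonical isomorphism $C^\bullet_{\mathrm{def},\mathrm{lin}}(W) \cong C^\bullet_{\mathrm{def},\mathrm{lin}}(W^\ast_A)$ to $W = TA$, whose dual VB-algebroid is the cotangent VB-algebroid $T^\ast A$, gives $H^\bullet_{\mathrm{def},\mathrm{lin}}(TA) \cong H^\bullet_{\mathrm{def},\mathrm{lin}}(T^\ast A)$ at once. It therefore suffices to compute $H^\bullet_{\mathrm{def},\mathrm{lin}}(TA) \cong \mathfrak X_{\mathrm{lin}}(TA[1])^\bullet$, where throughout I identify $TA[1]_{TM}$ with $TA[1]$ by means of $\iota$ (Lemma \ref{lem:iota}), and the relevant exact sequence is (\ref{eq:SES_TA}).

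The core of the argument is to verify that the tangent lift (\ref{eq:tan}) is a splitting of (\ref{eq:SES_TA}) in the category of cochain complexes; a direct sum decomposition of cohomologies then follows formally. I would check three things. First, for every $X \in \mathfrak X(A[1])^\bullet$ the lift $X_{\mathrm{tan}}$ is linear with respect to $TA[1] \to A[1]$: its flow is the tangent prolongation $T\phi_t$ of the flow $\phi_t$ of $X$, hence consists of vector bundle automorphisms of $TA[1] \to A[1]$ covering $\phi_t$, which is exactly linearity. Second, and by the same observation, $X_{\mathrm{tan}}$ projects onto $X$ under $\mathfrak X_{\mathrm{lin}}(TA[1])^\bullet \to \mathfrak X(A[1])^\bullet$, so $\mathrm{tan}$ is a genuine section. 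Third, the tangent lift is a morphism of graded Lie algebras, $[X_{\mathrm{tan}}, Y_{\mathrm{tan}}] = [X,Y]_{\mathrm{tan}}$, the graded analogue of the classical naturality of tangent lifts, which can be read off from the defining relations (\ref{eq:tan_lift}) extended to graded manifolds as indicated after Lemma \ref{lem:iota}.

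The step I expect to require the most care is that $\mathrm{tan}$ commutes with the differentials. The differential on $\mathfrak X_{\mathrm{lin}}(TA[1])^\bullet$ is $[\mathcal Q, -]$ for the homological vector field $\mathcal Q$ of the DG-manifold $TA[1]$, while that on $\mathfrak X(A[1])^\bullet = C^\bullet_{\mathrm{def}}(A)$ is $[d_A, -]$. The key identity is $\mathcal Q = (d_A)_{\mathrm{tan}}$, i.e.~the homological field of the tangent DG-manifold is the tangent lift of the homological field $d_A$ of $A[1]$; this is the graded-geometric content of the bracket formulas (\ref{eq:bracket_TA}) for $TA \Rightarrow TM$, and I would verify it in the coordinates $(x^i, \tilde u{}^\alpha, X^i, \tilde U{}^\alpha)$ from the proof of Lemma \ref{lem:iota}. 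Granting this, the morphism property yields $[\mathcal Q, X_{\mathrm{tan}}] = [(d_A)_{\mathrm{tan}}, X_{\mathrm{tan}}] = [d_A, X]_{\mathrm{tan}}$, so $\mathrm{tan}$ intertwines the two differentials and is a cochain map. This is the only genuinely computational input; the rest is formal.

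With the splitting in hand, the sequence (\ref{eq:SES_TA}) equipped with the chain-map section $\mathrm{tan}$ presents $\mathfrak X_{\mathrm{lin}}(TA[1])^\bullet$ as a direct sum of cochain complexes $\operatorname{\mathfrak{End}}(TA[1])^\bullet \oplus \mathfrak X(A[1])^\bullet$; although as a DGLA the extension is only semidirect, this is immaterial for the underlying complexes. Since cohomology commutes with finite direct sums, $H^\bullet_{\mathrm{def},\mathrm{lin}}(TA) \cong H^\bullet(\operatorname{\mathfrak{End}}(TA[1])) \oplus H^\bullet_{\mathrm{def}}(A)$, and combined with the duality isomorphism of the first step this gives the full statement.
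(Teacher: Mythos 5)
Your proposal is correct and follows essentially the same route as the paper: the duality theorem $C^\bullet_{\mathrm{def},\mathrm{lin}}(W) \cong C^\bullet_{\mathrm{def},\mathrm{lin}}(W^\ast_A)$ gives the first equality, and the tangent lift (\ref{eq:tan}) splitting the short exact sequence (\ref{eq:SES_TA}) of DGLAs gives the direct sum decomposition in cohomology. The only difference is that you spell out the verification (linearity of $X_{\mathrm{tan}}$, compatibility with brackets, and the key identity $\mathcal Q = (d_A)_{\mathrm{tan}}$, which is indeed the graded-geometric content of (\ref{eq:anchor_TA})--(\ref{eq:bracket_TA})) that the paper leaves implicit.
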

In the last part of the subsection we describe the inclusion (\ref{eq:tan}) in terms of deformation cochains. This generalizes (\ref{eq:anchor_TA}) and (\ref{eq:bracket_TA}) to possibly higher cochains. Using the canonical isomorphisms $C^\bullet_{\mathrm{def}, \mathrm{lin}} (TA) = \mathfrak X_{\mathrm{lin}} (TA[1])^{\bullet}$, and $C^\bullet_{\mathrm{def}} (A) = \mathfrak X (A[1])^{\bullet}$ we get an inclusion
\[
\operatorname{tan} : C^\bullet_{\mathrm{def}} (A) \hookrightarrow C^\bullet_{\mathrm{def}, \mathrm{lin}} (TA), \quad c \mapsto c_{\mathrm{tan}}.
\]

\begin{proposition}\label{prop:tan_VB_class}
 Let $c \in C^{k-1}_{\mathrm{def}}(A)$. Then $c_{\operatorname{tan}} \in C^{k-1}_{\mathrm{def}, \mathrm{lin}}(TA)$ satisfies:
	\begin{enumerate}
		\item $c_{\operatorname{tan}}(T\alpha_1, \dots, T\alpha_k) = T c(\alpha_1, \dots, \alpha_k)$,
		\item $c_{\operatorname{tan}}(T\alpha_1, \dots, T\alpha_{k-1}, \overline \alpha_k) = \overline{c(\alpha_1, \dots, \alpha_k)}$,
		\item $\sigma_{c_{\operatorname{tan}}} (T\alpha_1, \dots, T\alpha_{k-1}) = \sigma_c (\alpha_1, \dots, \alpha_{k-1})_{\mathrm{tan}}$,
		\item $\sigma_{c_{\operatorname{tan}}} (T\alpha_1, \dots, T\alpha_{k-2}, \overline \alpha_{k-1})  = \sigma_c (\alpha_1, \dots, \alpha_{k-1})^\uparrow$,
	\end{enumerate}
	for all $\alpha_1, \dots, \alpha_k \in \Gamma(A)$. Identities (1)-(4) (together with the fact that $c_{\mathrm{tan}}$ is a linear cochain) determine $c_{\mathrm{tan}}$ completely.
\end{proposition}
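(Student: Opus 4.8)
The plan is to work entirely within the graded-geometric picture, where $c_{\operatorname{tan}}$ is by construction the linear deformation cochain of $TA \to TM$ corresponding, under the isomorphism $\iota$ of Lemma \ref{lem:iota}, to the tangent lift $(\delta_c)_{\operatorname{tan}} \in \mathfrak X(TA[1])^\bullet$ of the vector field $\delta_c \in \mathfrak X(A[1])^\bullet$ attached to $c$. In other words, after identifying $TA[1]_{TM}$ with $TA[1]$ via $\iota$, the derivation $\delta_{c_{\operatorname{tan}}}$ equals $(\delta_c)_{\operatorname{tan}}$. Since $c$ is a $k$-derivation, $\delta_c$ has degree $k-1$, and the four restricted actions in the statement are exactly what the reconstruction formulas (\ref{eq:delta_c_f}) and (\ref{eq:delta_c_phi}), now applied to the vector bundle $TA \to TM$, extract from $\delta_{c_{\operatorname{tan}}}$. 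Concretely, I would evaluate $(\delta_c)_{\operatorname{tan}}$ on the two families of generating functions $\iota^\ast \tau^\ast \omega$ and $\iota^\ast \ell_{d\omega}$ (for $\omega \in C^\infty(A[1])$, $\tau : TA[1]\to A[1]$ the projection), using the graded version of the defining identities (\ref{eq:tan_lift}): $(\delta_c)_{\operatorname{tan}}(\tau^\ast \omega) = \tau^\ast(\delta_c \omega)$ and $(\delta_c)_{\operatorname{tan}}(\ell_{d\omega}) = \ell_{d(\delta_c \omega)}$, together with Lemma \ref{lem:iota} and the evident analogues of (\ref{eq:iota})--(\ref{eq:iota_2}) for the pullbacks $\tau^\ast\omega$.

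I would prove the symbol identities (3) and (4) first, since $\sigma_{c_{\operatorname{tan}}}$ enters the reconstruction formula (\ref{eq:delta_c_phi}) needed for (1) and (2). For (3), I use (\ref{eq:delta_c_f}) and pair with degree-$0$ functions built from $f \in C^\infty(M)$: testing $\sigma_{c_{\operatorname{tan}}}(T\alpha_1, \dots, T\alpha_{k-1})$ on $\iota^\ast \ell_{df}$ yields $\langle \iota^\ast\ell_{d(\delta_c f)}, T\alpha_1 \wedge \cdots \wedge T\alpha_{k-1}\rangle = \ell_{d(\sigma_c(\alpha_1, \dots, \alpha_{k-1})f)}$, where the first equality is the tangent-lift property and the second combines (\ref{eq:iota}) with $\delta_c f = \sigma_c(-)f$; testing on $\iota^\ast \tau^\ast f$ produces the corresponding pullback of $\sigma_c(\alpha_1, \dots, \alpha_{k-1})f$. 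These are precisely the values of $\sigma_c(\alpha_1, \dots, \alpha_{k-1})_{\operatorname{tan}}$ on $\ell_{df}$ and $\tau^\ast f$ by (\ref{eq:tan_lift}), so the two vector fields on $TM$ agree. Identity (4) is the same computation with one linear argument replaced by a core section $\overline \alpha_{k-1}$: now (\ref{eq:iota_2}) replaces (\ref{eq:iota}), the $\ell_{df}$-test produces the pullback of $\sigma_c(\alpha_1,\dots,\alpha_{k-1})f$ while the $\tau^\ast f$-test produces $0$, which is exactly the action of the vertical lift $\sigma_c(\alpha_1, \dots, \alpha_{k-1})^\uparrow$.

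With the symbol known, I turn to (1) and (2), which determine the sections $c_{\operatorname{tan}}(\dots) \in \Gamma(TA, TM)$. Here I pair with the degree-$1$ functions $\psi = \iota^\ast \ell_{d\varphi}$ and $\psi = \iota^\ast \tau^\ast \varphi$ for $\varphi \in \Gamma(A^\ast) = \Omega^1_A$; by the coordinate description in the proof of Lemma \ref{lem:iota} (under which $\iota^\ast X^i = \dot x^i$ and $\iota^\ast \tilde U^\alpha = \tilde{\dot u}{}^\alpha$) these span the sections of $(TA)^\ast_{TM}$ and hence separate both linear and core sections. Reconstruction (\ref{eq:delta_c_phi}) for $c_{\operatorname{tan}}$ expresses $\langle \psi, c_{\operatorname{tan}}(T\alpha_1, \dots, T\alpha_k)\rangle$ as $\delta_{c_{\operatorname{tan}}}\psi(T\alpha_1, \dots, T\alpha_k)$ minus the symbol terms $\sum_i (-1)^{(k-1)-i}\,\sigma_{c_{\operatorname{tan}}}(T\alpha_1, \dots, \widehat{T\alpha_i}, \dots, T\alpha_k)\,\langle \psi, T\alpha_i\rangle$. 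Evaluating the first term via the tangent-lift property and (\ref{eq:iota}) reproduces $\ell_{d(\delta_c\varphi(\alpha_1, \dots, \alpha_k))}$, while $\delta_c\varphi(\alpha_1, \dots, \alpha_k)$ decomposes by (\ref{eq:delta_c_phi}) into exactly the same symbol sum plus $\langle \varphi, c(\alpha_1, \dots, \alpha_k)\rangle$; inserting identity (3) the symbol contributions cancel term by term, leaving $\ell_{d\langle \varphi, c(\alpha_1, \dots, \alpha_k)\rangle}$, which by (\ref{eq:iota}) is $\langle \psi, T c(\alpha_1, \dots, \alpha_k)\rangle$. The $\iota^\ast\tau^\ast \varphi$-test is analogous and fixes the remaining fiber-constant component, giving (1); identity (2) follows identically with $\overline \alpha_k$ in the last slot and (\ref{eq:iota_2}) in place of (\ref{eq:iota}). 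Finally, Proposition \ref{prop:lin_der} guarantees that a linear $k$-derivation is determined by the four restricted actions, so these identities characterize $c_{\operatorname{tan}}$ completely.

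The part that requires the most care is the precise cancellation of the symbol contributions in (1) and (2): both the reconstruction formula (\ref{eq:delta_c_phi}) and the expansion of $\delta_c\varphi$ carry a sum of symbol terms with signs $(-1)^{(k-1)-i}$, and one must check that, once identity (3) is inserted and the tangent lift is applied, these match term by term and drop out, isolating $\langle \varphi, c(\alpha_1,\dots,\alpha_k)\rangle$. Keeping track of these signs, of the $k!$ factors relating the wedge-pairing in (\ref{eq:iota})--(\ref{eq:iota_2}) to plain multiderivation evaluation, and of the Koszul signs carried by the degree-$(k-1)$ vector field $\delta_c$ is the genuine bookkeeping burden. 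A coordinate computation along the lines of the proof of Lemma \ref{lem:iota}, using the explicit expression for $T\alpha_a$ in terms of $Tu_\alpha$ and $\overline u_\alpha$, is available both as a cross-check and as an alternative route should the invariant argument become unwieldy.
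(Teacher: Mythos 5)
Your proposal is correct and follows essentially the same route as the paper's proof: identify $\delta_{c_{\operatorname{tan}}}$ with the tangent lift $(\delta_c)_{\operatorname{tan}}$, establish the symbol identities (3)--(4) first by testing against $\ell_{df}$ via (\ref{eq:tan_lift}) and Lemma \ref{lem:iota}, and then obtain (1)--(2) by pairing with the functions $\ell_{d\varphi}$ and cancelling the symbol terms in the reconstruction formula (\ref{eq:delta_c_phi}). The only cosmetic difference is that where you additionally test against $\tau^\ast$-pullback functions to pin down the remaining component, the paper instead notes that both sides are linear objects projecting onto the same base datum, which suffices.
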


\begin{proof} 
	We begin with (3). Recall that the tangent lift $X_{\mathrm{tan}}$ of a vector field $X$ is completely determined by (\ref{eq:tan_lift}) (and this remains true in the graded setting). So, let $X \in \mathfrak{X}(A[1])^\bullet$ be the graded vector field corresponding to $c$ (hence $X_{\mathrm{tan}} \in \mathfrak X (TA[1]_{TM})$ is the graded vector field corresponding to $c_{\mathrm{tan}}$), let $f \in C^\infty (M)$ and let $\alpha_1, \ldots, \alpha_{k-1} \in \Gamma (A)$. Using (\ref{eq:delta_c_f}), compute
	\[
	\sigma_{c_{\operatorname{tan}}}(T\alpha_1, \dots, T\alpha_{k-1}) \ell_{df} = \langle X_{\operatorname{tan}}(\ell_{df}), T\alpha_1 \wedge \cdots \wedge T\alpha_{k-1} \rangle = \langle \ell_{d (X(f))}, T\alpha_1 \wedge \cdots \wedge T\alpha_{k-1}\rangle.
	\]
	From (\ref{eq:iota}),
	\[
	\langle \ell_{d (X(f))}, T\alpha_1 \wedge \cdots \wedge T\alpha_{k-1}\rangle = \ell_{d\langle X(f), \alpha_1\wedge \cdots \wedge \alpha_{k-1}\rangle} = \ell_{d (\sigma_c (\alpha_1, \ldots, \alpha_{k-1}) f)} =\sigma_c (\alpha_1, \ldots, \alpha_{k-1})_{\mathrm{tan}} \ell_{df}.
	\]
	As $\sigma_{c_{\operatorname{tan}}}(T\alpha_1, \dots, T\alpha_{k-1}) $ and $\sigma_c (\alpha_1, \ldots, \alpha_{k-1})_{\mathrm{tan}} $ are both linear and they both project onto $\sigma_c (\alpha_1, \ldots, \alpha_{k-1})$, this is enough to conclude that $\sigma_{c_{\operatorname{tan}}}(T\alpha_1, \dots, T\alpha_{k-1}) = \sigma_c (\alpha_1, \ldots,\alpha_{k-1})_{\mathrm{tan}} $. Identity (4) can be proved in a similar way from (\ref{eq:iota_2}) using that
	\[
	\sigma_c (\alpha_1, \ldots, \alpha_{k-1})^\uparrow \ell_{df} = \tau^\ast \langle df, \sigma_c (\alpha_1, \ldots, \alpha_{k-1}) \rangle = \tau^\ast (\sigma_c (\alpha_1, \ldots, \alpha_{k-1}) f). 
	\]

	We now prove (1). Both sides of the identity are linear sections of $TA \to TM$ and one can easily check in local coordinates that a linear section $\tilde \alpha$ is completely determined by pairings of the form $\langle \ell_{d\varphi}, \tilde \alpha \rangle$. Here, $\varphi$ is a section of $A^\ast \to M$ seen as a degree 1 function on $A[1]$, $d\varphi$ is its de Rham differential, and $\ell_{d\varphi}$ is the associated degree 1 fiber-wise linear function on $TA[1]$, which, in turn, can be interpreted as a $1$-form on the algebroid $TA \Rightarrow TM$, as in Lemma (\ref{lem:iota}).
	
	So, take $\varphi \in \Gamma(A^\ast)$, $c \in C^{k-1}_{\operatorname{def}}(A)$, $\alpha_1, \dots, \alpha_k \in \Gamma(A)$, and compute
	\[
	\begin{aligned}
	& \langle \ell_{d\varphi}, c_{\operatorname{tan}}(T\alpha_1, \dots, T\alpha_k) \rangle \\
	& = \langle X_{\operatorname{tot}} (\ell_{d\varphi}), T \alpha_1 \wedge \dots \wedge T \alpha_k \rangle  - \sum_i (-)^{k-i} \sigma_{c_{\operatorname{tan}}} (T \alpha_1, \dots, \widehat{T\alpha_i}, \dots, T\alpha_k) \langle \ell_{d \varphi}, T \alpha_i \rangle \\
	& = \langle \ell_{d(X(\varphi))}, T\alpha_1 \wedge \dots \wedge T\alpha_k \rangle - \sum_i (-)^{k-i} \sigma_c (\alpha_1, \dots, \widehat{\alpha_i}, \dots, \alpha_k)_{\operatorname{tan}} \ell_{d \langle \omega, \alpha_i \rangle} \\
	& = \ell_{d \langle X(\varphi), \alpha_1 \wedge \dots \wedge \alpha_k \rangle} - \sum_i (-)^{k-i} \ell_{d( \sigma_c (\alpha_1, \dots, \widehat{\alpha_i}, \dots, \alpha_k) \langle \omega, \alpha_i \rangle)} \\
	& = \ell_{d \langle \varphi, c(\alpha_1, \dots, \alpha_k) \rangle} = \langle \ell_{d \varphi}, Tc(\alpha_1, \dots, \alpha_k) \rangle,
	\end{aligned}
	\]
	where we used, in particular, (\ref{eq:def_der}),  the first one in (\ref{eq:tan_lift}), Identity (3), and (\ref{eq:iota}). So (1) holds.
	
	Identity (2) can be proved in a similar way using (\ref{eq:def_der}), both identities (\ref{eq:tan_lift}), Identity (4), and (\ref{eq:iota_2}). We leave to the reader the straightforward details.
\end{proof}

\begin{remark}
	Proposition \ref{prop:tan_VB_class} shows, in particular, that the Lie bracket $b_A$ on $\Gamma (A)$ and the Lie bracket $b_{TA}$ in $\Gamma (TA, TM)$ are related by $b_{TA} = (b_A)_{\mathrm{tan}}$.
\end{remark}

\subsection{Partial connections}\label{sec:partial_conn}

Let $M$ be a manifold, $D \subset TM$ an involutive distribution, and let $\mathcal F$ be the integral foliation of $D$. In particular $D \Rightarrow M$ is a Lie algebroid with injective anchor. A flat (partial) $D$-connection $\nabla$ in a vector bundle $E \to M$ defines a VB-algebroid 
\[
\begin{array}{c}
\xymatrix{ H \ar[d] \ar@{=>}[r] & E \ar[d] \\
	D \ar@{=>}[r] & M}
\end{array},
\]
where $H \subset TE$ is the \emph{horizontal distribution} determined by $D$. Notice that the core of $(H \Rightarrow E; D \Rightarrow M)$ is trivial, and every VB-algebroid with injective (base) anchor and trivial core arises in this way. Hence, (small) deformations of $(H \Rightarrow E; D \Rightarrow M)$ are the same as simultaneous deformations of the foliation $\mathcal F$ and the flat partial connection $\nabla$. We now discuss the linear deformation cohomology. Denote by $q : E \to M$ the projection. First of all, the de Rham complex of $D \Rightarrow M$ is the same as leaf-wise differential forms $\Omega^\bullet (\mathcal F)$ with the leaf-wise de Rham differential $d_{\mathcal F}$. Hence, the deformation complex of $D$ consists of derivations of $\Omega^\bullet (\mathcal F)$ (the differential being the graded commutator with $d_{\mathcal F}$). As the core of $(H \Rightarrow E; D \Rightarrow M)$ is trivial, there is a canonical isomorphism $H \cong q^\ast D$ of vector bundles over $E$. It easily follows that
 the linear deformation complex $(C^\bullet_{\mathrm{def}, \mathrm{lin}}(H), \delta)$ consists of derivations of the graded module $\Omega^{\bullet} (\mathcal F, E)$ of $E$-valued, leaf-wise differential forms, and the differential $\delta$ is the commutator with the (leaf-wise partial) connection differential $d^\nabla_{\mathcal F}$. The kernel of $C^\bullet_{\mathrm{def}, \mathrm{lin}}(H) \to C^\bullet_{\mathrm{def}} (D)$ consists of graded $\Omega^\bullet(\mathcal F)$-linear endomorphisms of $\Omega^\bullet (\mathcal F, E)$. The latter are the same as $\operatorname{End} E$-valued leaf-wise differential forms $\Omega^\bullet (\mathcal F, \operatorname{End} E)$, and the restricted differential is the connection differential (corresponding to the induced connection in $\operatorname{End} E$).

Now, denote by $\nu = TM / D$ the normal bundle to $\mathcal F$. It is canonically equipped with the Bott connection $\nabla^{\mathrm{Bott}}$, and there is a deformation retraction, hence a quasi-isomorphism, $\pi : C^\bullet_{\mathrm{def}}(D) \to \Omega^\bullet (\mathcal F, \nu)$ that maps a deformation cochain $c$ to the composition $\pi (c)$ of the symbol $\sigma_c : \wedge^\bullet D \to TM$ followed by the projection $TM \to \nu$. A similar construction can be applied to linear deformation cochains. To see this, first notice that derivations of $E$ modulo covariant derivatives along $\nabla$, $\mathfrak D (E) / \operatorname{im} \nabla$, are sections of a vector bundle $\tilde \nu \to M$. Additionally, $\tilde \nu$ is canonically equipped with a flat partial connection, also called the \emph{Bott connection} and denoted $\nabla^{\mathrm{Bott}}$,  defined by
\[
\nabla^{\mathrm{Bott}}_{X} (\Delta \operatorname{mod} \operatorname{im} \nabla) = [\nabla_X, \Delta] \operatorname{mod} \operatorname{im} \nabla
\] 
for all $\Delta \in \mathfrak D (E)$, and $X \in \Gamma (D)$. The symbol map $\sigma : \mathfrak D (E) \to \mathfrak X (M)$ descends to a surjective vector bundle map $\tilde \nu \to \nu$, intertwining the Bott connections. As $\operatorname{End} E \cap \operatorname{im} \nabla = 0$, we have $\ker (\tilde \nu \to \nu) = \operatorname{End} E $. In other words, there is a short exact sequence of vector bundles with partial connections:
\[
0 \longrightarrow \operatorname{End} E  \longrightarrow \tilde \nu  \longrightarrow \nu  \longrightarrow 0.
\]

Now, we define a surjective cochain map $\tilde \pi : C^\bullet_{\mathrm{def}, \mathrm{lin}}(H) \to \Omega^\bullet (\mathcal F, \tilde \nu)$. Let $\tilde c$ be a linear deformation cochain. Its symbol $\sigma_{\tilde c}$ maps linear sections of $H \to E$ to linear vector field on $E$. As $H \cong q^\ast D$, linear sections identify with plain sections of $D$. Accordingly $\sigma_{\tilde c}$ can be seen as a $\mathfrak D (E)$-valued $D$-form. Take this point of view and denote by $\tilde \pi (\tilde c) : \wedge^\bullet D \to \tilde \nu$ the composition of $\sigma_{\tilde c}$ followed by the projection $\mathfrak D (E) \to \Gamma (\tilde \nu)$.  

Summarizing, we have the following commutative diagram
\[
\begin{array}{c}
\xymatrix{
	0 \ar[r] & \Omega^\bullet (\mathcal F, \operatorname{End} E) \ar[r] \ar@{=}[d] & C^\bullet_{\mathrm{def}, \mathrm{lin}} (H) \ar[r] \ar[d]^{\tilde \pi} & C^\bullet_{\mathrm{def}} (D) \ar[r]  \ar[d]^{\pi}  & 0 \\
	0 \ar[r] & \Omega^\bullet (\mathcal F,  \operatorname{End} E) \ar[r] & \Omega^\bullet (\mathcal F,  \tilde \nu ) \ar[r] & \Omega^\bullet (\mathcal F,  \nu ) \ar[r] & 0
}
\end{array}.
\]
The rows are short exact sequences of DG-modules, and the vertical arrows are DG-module surjections. Additionally, $\pi$ is a quasi-isomorphism. Hence, it immediately follows from the Snake Lemma and the Five Lemma that $\tilde \pi$ is a quasi-isomorphism as well. We have thus proved the following

\begin{proposition}
	There is a canonical isomorphism of graded vector spaces between the linear deformation cohomology of the VB-algebroid $(H \Rightarrow E; D \Rightarrow M)$, and the leaf-wise cohomology with coefficients in $\tilde \nu$:
	\[
	H^\bullet_{\mathrm{def}, \mathrm{lin}} (H) = H^\bullet (\mathcal F, \tilde \nu).
	\]
\end{proposition}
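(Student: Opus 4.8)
The plan is to obtain the isomorphism purely homologically from the morphism of short exact sequences displayed just above the statement, reducing everything to the long exact cohomology sequences and the Five Lemma once three ingredients are checked: exactness of the two rows, commutativity of the squares, and the quasi-isomorphism property of $\pi$ (which I take as already established). First I would identify the top row with the sequence \eqref{eq:SESDGLAs_II} specialized to $(H \Rightarrow E; D \Rightarrow M)$, so that $C^\bullet_{\mathrm{def},\mathrm{lin}}(H)$ sits between its subcomplex $\operatorname{\mathfrak{End}}(H[1]_E)^\bullet$ and the quotient $C^\bullet_{\mathrm{def}}(D)$; using triviality of the core and the canonical isomorphism $H \cong q^\ast D$, the subcomplex is identified with $\Omega^\bullet(\mathcal F, \operatorname{End} E)$. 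The bottom row is obtained by applying the exact functor $\Omega^\bullet(\mathcal F, -)$ to the short exact sequence of vector bundles with Bott connections $0 \to \operatorname{End} E \to \tilde\nu \to \nu \to 0$, so its exactness is automatic.

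The second step is to verify that $\tilde\pi$ is a cochain map and that both squares commute. On the sub-object the left vertical arrow is the identity of $\Omega^\bullet(\mathcal F, \operatorname{End} E)$, while on the quotient the right vertical arrow is exactly $\pi$; commutativity of the right square then amounts to the compatibility of the projection $\mathfrak D(E) \to \Gamma(\tilde\nu)$ with the symbol $\sigma: \mathfrak D(E) \to \mathfrak X(M)$ and the projection $TM \to \nu$, which holds by the very construction of the surjection $\tilde\nu \to \nu$. That $\tilde\pi$ intertwines the differential $\delta$ of $C^\bullet_{\mathrm{def},\mathrm{lin}}(H)$ with the leaf-wise differential on $\Omega^\bullet(\mathcal F, \tilde\nu)$ twisted by $\nabla^{\mathrm{Bott}}$ is the only computational point: it follows from the formula \eqref{eq:diff} for $\delta = \llbracket b_H, - \rrbracket$ together with the defining relation $\nabla^{\mathrm{Bott}}_X(\Delta \operatorname{mod} \operatorname{im} \nabla) = [\nabla_X, \Delta] \operatorname{mod} \operatorname{im} \nabla$, once one notes that the bracket $b_H$ on linear sections of $H \to E$ reproduces, under $H \cong q^\ast D$, the flat connection $\nabla$.

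The final step is formal. Passing to the long exact sequences in cohomology of the two rows and using naturality of the connecting homomorphisms yields a commutative ladder in which the vertical maps coming from the sub-objects (induced by the identity) are isomorphisms, and those coming from the quotients (induced by $\pi$) are isomorphisms because $\pi$ is a quasi-isomorphism. The Five Lemma then forces the maps induced by $\tilde\pi$ to be isomorphisms in every degree, so $\tilde\pi$ is a quasi-isomorphism and the canonical map it induces gives the asserted isomorphism $H^\bullet_{\mathrm{def},\mathrm{lin}}(H) \cong H^\bullet(\mathcal F, \tilde\nu)$ of graded vector spaces.

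I expect the main obstacle to be the cochain-map verification in the second step: one must expand the Gerstenhaber differential on linear deformation cochains of $H$, project the resulting symbols into $\tilde\nu = \mathfrak D(E)/\operatorname{im}\nabla$, and recognize the outcome as the leaf-wise differential twisted by the Bott connection. Everything else --- exactness of the rows, commutativity of the squares, and the appeal to the Snake and Five Lemmas --- is routine bookkeeping once the quasi-isomorphism $\pi : C^\bullet_{\mathrm{def}}(D) \to \Omega^\bullet(\mathcal F, \nu)$ is granted.
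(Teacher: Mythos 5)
Your proposal is correct and follows essentially the same route as the paper: the paper also establishes the statement by fitting $\tilde\pi$ into the morphism of short exact sequences over the identity on $\Omega^\bullet(\mathcal F,\operatorname{End}E)$ and over the quasi-isomorphism $\pi$, and then invokes the Snake and Five Lemmas. The cochain-map verification you flag as the main computational point is likewise left implicit in the paper.
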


\subsection{Lie algebra actions on vector bundles}\label{sec:Lie_vect}

Let $\mathfrak g$ be a (finite dimensional, real) Lie algebra acting on a vector bundle $E \to M$ by infinitesimal vector bundle automorphisms. In particular $\mathfrak g$ acts on $M$ and there is an associated action Lie algebroid $\mathfrak g \ltimes M \Rightarrow M$. Additionally, $\mathfrak g$ acts on the total space $E$ by linear vector fields. Equivalently, there is a Lie algebra homomorphism $\mathfrak g \to \mathfrak D (E)$ covering the (infinitesimal) action $\mathfrak g \to \mathfrak X (M)$. It follows that $(\mathfrak g \ltimes E \Rightarrow E; \mathfrak g \ltimes M \Rightarrow M)$ is a VB-algebroid. We want to discuss linear deformation cohomologies of $\mathfrak g \ltimes E \Rightarrow E$. We begin reviewing Crainic and Moerdijk remarks on the deformation cohomology of $\mathfrak g \ltimes M \Rightarrow M$ \cite{crainic:def} providing a graded geometric interpretation. The deformation complex $C^\bullet_{\mathrm{def}}(\mathfrak g \ltimes M)$ consists of vector fields on $(\mathfrak g \times M)[1] = \mathfrak g[1] \times M$. Denote by $\pi_{\mathfrak g}: \mathfrak g[1] \times M \to \mathfrak g [1]$ the projection. Composition on the right with the pull-back
\[
\pi_{\mathfrak g}^\ast : C^\infty (\mathfrak g [1])^\bullet \to C^\infty (\mathfrak g[1] \times M)^\bullet
\]
establishes a projection from vector fields on $\mathfrak g[1] \times M$ to $\pi_{\mathfrak g}$-relative vector fields $\mathfrak X_{\mathrm{rel}} (\pi_{\mathfrak g})^\bullet$, i.e.~vector fields on $\mathfrak g[1]$ with coefficients in functions on $\mathfrak g [1] \times M$:
\begin{equation}\label{eq:proj}
\mathfrak X ( \mathfrak g[1] \times M)^\bullet \to \mathfrak X_{\mathrm{rel}} (\pi_{\mathfrak g})^\bullet, \quad X \mapsto X \circ \pi_{\mathfrak g}^\ast.
\end{equation}
The kernel of projection (\ref{eq:proj})  consists of $\pi_\mathfrak g$-vertical vector fields $\mathfrak X^{\pi_\mathfrak g} (\mathfrak g[1] \times M)^\bullet$. Denote by $d_{\mathfrak g} \in \mathfrak X (\mathfrak g[1] \times M)^\bullet$ the homological vector field on $\mathfrak g[1] \times M$. The graded commutator $\delta := [d_{\mathfrak g}, -]$ preserves $\pi_\mathfrak g$-vertical vector fields. Hence there is a short exact sequence of cochain complexes:
\begin{equation}\label{SES:gM}
0 \to \mathfrak X^{\pi_\mathfrak g} (\mathfrak g[1] \times M)^\bullet \to \mathfrak X (\mathfrak g[1] \times M)^\bullet \to \mathfrak X_{\mathrm{rel}} (\pi_{\mathfrak g})^\bullet \to 0.
\end{equation}
Now, $\mathfrak X (\mathfrak g[1] \times M)^\bullet$ is exactly the deformation complex of $\mathfrak g \ltimes M$. Similarly, $\mathfrak X_{\mathrm{rel}} (\pi_{\mathfrak g})^\bullet$ is (canonically isomorphic to) the Chevalley-Eilenberg cochain complex of $\mathfrak g$ with coefficients in $C^\infty (M) \otimes \mathfrak g$: the tensor product of $C^\infty (M)$ and the adjoint representation. Following \cite{crainic:def}, we shortly denote this tensor product by $\mathfrak g_M$. Finally, $\mathfrak X^{\pi_\mathfrak g} (\mathfrak g[1] \times M)^\bullet$ is canonically isomorphic to the Chevalley-Eilenberg cochain complex of $\mathfrak g$ with coefficients in $\mathfrak X (M)$, up to a shit by $1$. So there is a short exact sequence of cochain complexes
\[
0 \to C^\bullet (\mathfrak g, \mathfrak X (M)) \to C^\bullet_{\mathrm{def}}(\mathfrak g \ltimes M) \to C^{\bullet + 1} (\mathfrak g, \mathfrak g_M) \to 0,
\]
and a long exact cohomology sequence
\begin{equation}\label{eq:LES}
\cdots \to H^k (\mathfrak g, \mathfrak X (M)) \to H^k_{\mathrm{def}}(\mathfrak g \ltimes M) \to H^{k+1} (\mathfrak g, \mathfrak g_M) \to  H^{k+1}(\mathfrak g, \mathfrak X (M)) \to \cdots.
\end{equation}
 
 We now pass to $\mathfrak g \ltimes E$. The linear deformation complex $C^\bullet_{\mathrm{def}, \mathrm{lin}} (\mathfrak g \ltimes E)$ consists of linear vector fields on $\mathfrak g [1] \times E$. Similarly as above, we consider the projection $\tilde \pi_{\mathfrak g}: \mathfrak g[1] \times E \to \mathfrak g [1]$. Composition on the right with the pull-back $\tilde \pi{}_{\mathfrak g}^\ast$
establishes a projection:
\[
\mathfrak X_{\mathrm{lin}}( \mathfrak g[1] \times E)^\bullet \to \mathfrak X_{\mathrm{rel}} (\pi_{\mathfrak g})^\bullet, \quad X \mapsto X \circ \tilde \pi{}^\ast_{\mathfrak g}
\]
(beware, the range consists of $\pi_{\mathfrak g}$-relative, not $\tilde \pi_{\mathfrak g}$-relative, vector fields) whose kernel consists of $\tilde \pi_\mathfrak g$-vertical linear vector fields $\mathfrak X^{\pi_\mathfrak g}_{\mathrm{lin}} (\mathfrak g[1] \times E)^\bullet$. Hence there is a short exact sequence of cochain complexes:
\begin{equation}\label{SES:gE}
0 \to \mathfrak X^{\tilde \pi_\mathfrak g}_{\mathrm{lin}} (\mathfrak g[1] \times E)^\bullet \to \mathfrak X_{\mathrm{lin}} (\mathfrak g[1] \times E)^\bullet \to \mathfrak X_{\mathrm{rel}} (\pi_{\mathfrak g})^\bullet \to 0.
\end{equation}
Using the projection $\mathfrak X_{\mathrm{lin}} (\mathfrak g[1] \times E)^\bullet \to \mathfrak X (\mathfrak g[1] \times M)^\bullet$, we can combine sequences (\ref{SES:gE}) and (\ref{SES:gM}) in an exact diagram
\[
\begin{array}{c}
\xymatrix@C=15pt@R=15pt{
& 0 \ar[d] & 0 \ar[d] & & \\
0 \ar[r] & \operatorname{\mathfrak{End}}(\mathfrak g [1] \times E)^\bullet  \ar[d] \ar@{=}[r] & \operatorname{\mathfrak{End}} (\mathfrak g [1] \times E)^\bullet \ar[r] \ar[d] & 0 \ar[d] \\
0 \ar[r] & \mathfrak X^{\tilde \pi_\mathfrak g}_{\mathrm{lin}} (\mathfrak g[1] \times E)^\bullet \ar[r] \ar[d] & \mathfrak X_{\mathrm{lin}} (\mathfrak g[1] \times E)^\bullet \ar[r] \ar[d] & \mathfrak X_{\mathrm{rel}} (\pi_{\mathfrak g})^\bullet \ar[r] \ar@{=}[d] & 0 \\
0 \ar[r] & \mathfrak X^{\pi_\mathfrak g} (\mathfrak g[1] \times M)^\bullet \ar[r] \ar[d] & \mathfrak X(\mathfrak g[1] \times M)^\bullet \ar[r] \ar[d] & \mathfrak X_{\mathrm{rel}} (\pi_{\mathfrak g})^\bullet \ar[r] \ar[d] & 0 \\
 & 0 & 0 & 0 &
}
\end{array},
\]
where, as usual, $\operatorname{\mathfrak{End}}(\mathfrak g [1] \times E)^\bullet $ consists of graded endomorphism of the vector bundle $\mathfrak g[1] \times E \to \mathfrak g[1] \times M$ (covering the identity). Now, $\mathfrak X_{\mathrm{lin}}(\mathfrak g[1] \times E)^\bullet$ is the linear deformation complex of $(\mathfrak g \ltimes E \Rightarrow E ; \mathfrak g \ltimes M \Rightarrow M)$, and $\operatorname{\mathfrak{End}} (\mathfrak g [1] \times E)^\bullet$ is canonically isomorphic to the Chevalley-Eilenberg cochain complex of $\mathfrak g$ with coefficients in $\operatorname{\mathfrak{End}} E$, endomorphisms of $E$ (covering the identity). Finally, $\mathfrak X^{\tilde \pi_\mathfrak g}_{\mathrm{lin}} (\mathfrak g[1] \times E)^\bullet$ is canonically isomorphic to the Chevalley-Eilenberg cochain complex of $\mathfrak g$ with coefficients in $\mathfrak D (E)$. The isomorphism 
\[
C^\bullet (\mathfrak g , \mathfrak D (E)) \overset{\cong}{\longrightarrow} \mathfrak X^{\tilde \pi_\mathfrak g}_{\mathrm{lin}} (\mathfrak g[1] \times E)^\bullet
\]
maps a cochain $\omega \otimes \Delta$ to the vector field $\tilde \pi{}^\ast_{\mathfrak g}(f_{\omega}) X_{\Delta}$, where $f_\omega$ is the function on $\mathfrak g [1]$ corresponding to $\omega \in C^\bullet (\mathfrak g)$, and $X_\Delta$ is the unique $\tilde \pi_{\mathfrak g}$-vertical vector field on $\mathfrak g [1] \times E$ projecting on the linear vector field on $E$ corresponding to derivation $\Delta$.

We conclude that there is an exact diagram of cochain complexes
\[
\begin{array}{c}
\xymatrix@C=15pt@R=15pt{
& 0 \ar[d] & 0 \ar[d] & & \\
0 \ar[r] & C^\bullet (\mathfrak g, \operatorname{\mathfrak{End}} E) \ar[d] \ar@{=}[r] & C^\bullet (\mathfrak g, \operatorname{\mathfrak{End}} E) \ar[r] \ar[d] & 0 \ar[d] \\
0 \ar[r] & C^\bullet (\mathfrak g, \mathfrak{D} (E)) \ar[r] \ar[d] & C^\bullet_{\mathrm{def}, \mathrm{lin}} (\mathfrak g\ltimes E) \ar[r] \ar[d] & C^{\bullet +1} (\mathfrak g, \mathfrak g_M) \ar[r] \ar@{=}[d] & 0 \\
0 \ar[r] & C^\bullet (\mathfrak g, \mathfrak X(M)) \ar[r] \ar[d] & C^\bullet_{\mathrm{def}}(\mathfrak g \ltimes M) \ar[r] \ar[d] & C^{\bullet +1} (\mathfrak g, \mathfrak g_M) \ar[r] \ar[d] & 0 \\
 & 0 & 0 & 0 &
}
\end{array}.
\]
This proves the following
\begin{proposition}
Let $\mathfrak g$ be a Lie algebra acting on a vector bundle $E \to M$ by infinitesimal vector bundle automorphisms. The linear deformation cohomology of the VB-algebroid $(\mathfrak g \ltimes E \Rightarrow E, \mathfrak g \ltimes M \Rightarrow M)$ fits in the exact diagram:
\[
\begin{array}{c}
\xymatrix@C=15pt@R=15pt{
& \vdots \ar[d] &  \vdots \ar[d] &  \vdots \ar[d] &  \vdots \ar[d] & \\
\cdots \ar[r] & H^k (\mathfrak g, \operatorname{\mathfrak{End}} E) \ar@{=}[r] \ar[d]& H^k(\mathfrak g, \operatorname{\mathfrak{End}} E) \ar[r] \ar[d]& 0 \ar[r] \ar[d] &  H^{k+1}(\mathfrak g, \operatorname{\mathfrak{End}} E) \ar[r] \ar[d]& \cdots \\
\cdots \ar[r] & H^k (\mathfrak g, \mathfrak D (E)) \ar[r] \ar[d]& H^k_{\mathrm{def}, \mathrm{lin}}(\mathfrak g \ltimes E) \ar[r] \ar[d]& H^{k +1} (\mathfrak g, \mathfrak g_M) \ar[r] \ar@{=}[d]&  H^{k+1}(\mathfrak g, \mathfrak D (E)) \ar[r] \ar[d]& \cdots \\
\cdots \ar[r] & H^k (\mathfrak g, \mathfrak X (M)) \ar[r] \ar[d] & H^k_{\mathrm{def}}(\mathfrak g \ltimes M) \ar[r] \ar[d]& H^{k +1}(\mathfrak g, \mathfrak g_M) \ar[r] \ar[d]&  H^{k+1}(\mathfrak g, \mathfrak X (M)) \ar[r] \ar[d] & \cdots \\
\cdots \ar[r] & H^{k+1} (\mathfrak g, \operatorname{\mathfrak{End}} E) \ar@{=}[r] \ar[d]& H^{k+1}(\mathfrak g, \operatorname{\mathfrak{End}} E) \ar[r] \ar[d]& 0 \ar[r] \ar[d] &  H^{k+2}(\mathfrak g, \operatorname{\mathfrak{End}} E) \ar[r] \ar[d]& \cdots \\
& \vdots  &  \vdots  &  \vdots  &  \vdots &
}
\end{array}.
\]
\end{proposition}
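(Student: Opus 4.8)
The plan is to obtain the asserted diagram of cohomology groups directly from the exact commutative diagram of cochain complexes displayed immediately above the statement, simply by applying the cohomology functor $H^\bullet$. All of the substantive work has in fact already been carried out in the construction of that diagram: its entries have been identified with the Chevalley--Eilenberg complexes $C^\bullet(\mathfrak g, \operatorname{\mathfrak{End}} E)$, $C^\bullet(\mathfrak g, \mathfrak D(E))$, $C^\bullet(\mathfrak g, \mathfrak X(M))$, with the linear deformation complex $C^\bullet_{\mathrm{def}, \mathrm{lin}}(\mathfrak g \ltimes E)$, with $C^\bullet_{\mathrm{def}}(\mathfrak g \ltimes M)$, and with the shifted complex $C^{\bullet+1}(\mathfrak g, \mathfrak g_M)$, and its rows and columns have been shown to be short exact. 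So there is no geometry left to do, only homological bookkeeping.

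First I would isolate the four genuinely nontrivial short exact sequences of complexes sitting inside that diagram: the two horizontal ones (the second and third rows, each with quotient $C^{\bullet+1}(\mathfrak g, \mathfrak g_M)$) and the two vertical ones (the first and second columns, with quotients $C^\bullet(\mathfrak g, \mathfrak X(M))$ and $C^\bullet_{\mathrm{def}}(\mathfrak g \ltimes M)$). Applying the long exact cohomology sequence to each of these produces, respectively, the two middle rows and the two middle columns of the target diagram. The remaining degenerate rows and columns, whose middle terms are of the form $0 \to C^\bullet = C^\bullet \to 0$ or $0 \to 0 \to C^\bullet = C^\bullet \to 0$, contribute only the outer strips: the identity maps $H^k(\mathfrak g, \operatorname{\mathfrak{End}} E) = H^k(\mathfrak g, \operatorname{\mathfrak{End}} E)$ (respectively $H^{k+1}(\mathfrak g, \mathfrak g_M) = H^{k+1}(\mathfrak g, \mathfrak g_M)$), the interleaved zeros, and the connecting maps $H^k(\mathfrak g, \operatorname{\mathfrak{End}} E) \to H^{k+1}(\mathfrak g, \operatorname{\mathfrak{End}} E)$ borrowed from the adjacent genuine sequences. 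Matching degrees entry by entry --- in particular keeping track of the single degree shift that turns $\mathfrak X_{\mathrm{rel}}(\pi_{\mathfrak g})^\bullet$ into $C^{\bullet+1}(\mathfrak g, \mathfrak g_M)$ --- then reproduces the diagram in the statement.

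The only point requiring genuine care, and the one I would flag as the crux, is that these four long exact sequences assemble into a single diagram in which every square commutes. This is exactly the naturality of the snake-lemma connecting homomorphism: a morphism between short exact sequences of complexes induces a morphism between their long exact cohomology sequences, compatible with the connecting maps. Since our data is precisely a short exact sequence of short exact sequences of complexes --- equivalently, since $H^\bullet$ is a cohomological $\delta$-functor --- the induced squares commute and the total outcome is the claimed exact diagram. I expect no analytic or geometric obstacle whatsoever here; the content lies entirely in the already-established identification of the terms and exactness of the rows and columns, and what remains is the formal, mildly tedious verification that the connecting maps of the horizontal and vertical long exact sequences are mutually compatible.
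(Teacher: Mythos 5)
Your proposal is correct and follows exactly the paper's route: the paper establishes the $3\times 3$-type exact diagram of cochain complexes and then simply asserts the proposition, leaving implicit precisely the homological bookkeeping (long exact sequences of the rows and columns plus naturality of the connecting maps) that you spell out. No discrepancy to report.
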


\subsection{Type 1 VB-algebroids}\label{sec:type_1}

Let $(W \Rightarrow E; A \Rightarrow M)$ be a VB-algebroid with core $C$. The \emph{core-anchor} of $(W \Rightarrow E; A \Rightarrow M)$ is the vector bundle map $\partial : C \to E$ defined as follows. Let $\chi$ be a section of $C$, and let $\overline \chi$ be the corresponding core section of $W \to E$. The anchor $\rho : W \to TE$ maps $\overline \chi$ to a core vector field $\rho (\overline \chi)$ on $E$. In its turn $\rho (\overline \chi)$ is the vertical lift of a section $\varepsilon$ of $E$. By definition, $\partial \chi = \varepsilon$. 

According to a definition by Gracia-Saz and Mehta \cite{gracia-saz:vb}, a VB-algebroid is \emph{type $1$} (resp.~\emph{type $0$}) if the core-anchor is an isomorphism (resp.~is the zero map). More generally, $(W \Rightarrow E; A \Rightarrow M)$ is \emph{regular} if the core-anchor has constant rank. In this case $(W \Rightarrow E; A \Rightarrow M)$ is the direct sum of a \emph{type $1$} and a \emph{type $0$} VB-algebroid, up to isomorphisms. So type 1 and type 0 VB-algebroids are the building blocks of regular VB-algebroids. In this subsection we discuss linear deformation cohomologies of type 1 VB-algebroids.

Let $(W \Rightarrow E; A \Rightarrow M)$ be a type 1 VB-algebroid, and denote by $q : E \to M$ the projection. Gracia-Saz and Mehta show that $(W \Rightarrow E; A \Rightarrow M)$ is canonically isomorphic to the VB-algebroid $(q^! A \Rightarrow E; A \Rightarrow M)$ \cite{gracia-saz:vb}. Here $q^! A \Rightarrow E$ is the \emph{pull-back Lie algebroid}. Recall that its total space $q^! A$ is the fibered product $
q ^{!} A := TE \, {{}_{dq } \times _{\rho }}\, A
$. Hence, sections of $q^! A \to E$ are pairs $(X, \alpha )$, where
$X$ is a vector field on $E$ and $\alpha$ is a section of the pull-back
bundle $q^{\ast }A \to E$, with the additional property that $d q (X_{e}) = \rho (
\alpha_{q(e)})$ for all $e \in E$. Then there exists a
unique Lie algebroid structure $q^{!} A \Rightarrow E$ such that the
anchor $q^{!} A \to TE$ is the projection $(X, \alpha) \mapsto X$,
and the Lie bracket is given by
\[
\left [  (X, q^{\ast }\alpha ), (Y, q^{\ast }\beta) \right ]  =
\left (  [X,Y], q^{\ast }[\alpha, \beta]\right ),
\]
on sections of the special form $(X, q^{\ast }\alpha ), (X, q^{\ast}
\beta )$, with $\alpha , \beta \in \Gamma (A)$. Finally, $(q^! A \Rightarrow E; A \Rightarrow M)$
is a VB-algebroid, and every VB-algebroid of type $1$ arises in this way
(up to isomorphisms).

As $E \to M$ is a vector bundle, it has contractible fibers. So, according to \cite{spar:def}, $q^! A \Rightarrow E$ and $A \Rightarrow M$ share the same deformation cohomology . As an immediate consequence we get that the canonical map $C^\bullet_{\mathrm{def}, \mathrm{lin}} (q^! A) \to C^\bullet_{\mathrm{def}} (A)$ induces an injection in cohomology. We want to show that, even more, it is a quasi-isomorphism. To do this it is enough to prove that the kernel $\operatorname{\mathfrak{End}} (q^! A [1]_E)^\bullet$ of $C^\bullet_{\mathrm{def}, \mathrm{lin}} (q^! A) \to C^\bullet_{\mathrm{def}} (A)$ is acyclic. We use graded geometry again. So, consider the pull-back diagram
\[
\begin{array}{c}
\xymatrix{q^! A \ar[r] \ar[d]&  TE \ar[d]^-{dq} \\
A \ar[r]^-{\rho} & TM
} 
\end{array}.
\]
All vertices are vector bundles, and shifting by one the degree in their fibers, we get a pull-back diagram of DG-manifolds:
\[
\begin{array}{c}
\xymatrix{q^! A[1]_E \ar[r] \ar[d]_-{\tilde q}&  T[1]E \ar[d]^-{dq} \\
A[1] \ar[r]^-{\rho} & T[1]M
} 
\end{array}.
\]
This shows, among other things, that there is a canonical isomorphism of DG-modules:
\[
\operatorname{\mathfrak{End}} (q^! A[1]_E)^\bullet = C^\bullet (A) \otimes_{\Omega^\bullet (M)} \operatorname{\mathfrak{End}}( T[1] E)^\bullet.
\]
From Proposition \ref{prop:TE}, exact sequence (\ref{eq:SESDGLAs}), and the fact that the
 deformation cohomology of $TM \Rightarrow M$ is trivial, $\operatorname{\mathfrak{End}}
 (T[1] E)^\bullet $ is acyclic. Actually there is a canonical contracting homotopy $h' : 
 \operatorname{\mathfrak{End}}( T[1] E)^\bullet \to \operatorname{\mathfrak{End}}( T[1] 
 E)^{\bullet -1}$. Indeed, there is a canonical contracting homotopy $H : \mathfrak X (T[1]
  E)^\bullet \to \mathfrak X (T[1] E)^{\bullet -1}$ restricting to both $\mathfrak X_{\mathrm{lin}}
   (T[1] E)^\bullet$ and $ \operatorname{\mathfrak{End}}( T[1] E)^\bullet$ (see, 
   e.g.~\cite{vit:sh-Lie, spar:def}, for a definition of $H$). Then, $h'$ is simply the restriction of $H$, 
   and it is graded $C^\bullet(A)$-linear. Finally, we define a contracting homotopy $h : 
   \operatorname{\mathfrak{End}} (q^! A[1]_E)^\bullet \to \operatorname{\mathfrak{End}}(q^! 
   A[1]_E)^{\bullet -1} $ by putting $h (\omega \otimes \Phi) := (-)^\omega \omega \otimes 
   h' (\Phi)$, for all $\omega \in C^\bullet (A)$, and all $\Phi \in \operatorname{\mathfrak{End}}
   (T[1] E)^\bullet $. Summarizing, we proved the following
\begin{proposition}
Let $(W \Rightarrow E; A \Rightarrow M)$ be a type 1 VB-algebroid. Then the canonical surjection $C^\bullet_{\mathrm{def}, \mathrm{lin}}(W) \to C^\bullet_{\mathrm{def}}(A)$ is a quasi-isomorphism. In particular,  $H^\bullet_{\mathrm{def}, \mathrm{lin}}(W) = H^\bullet_{\mathrm{def}}(A)$.
\end{proposition}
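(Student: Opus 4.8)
The plan is to deduce everything from the short exact sequence of DGLAs (\ref{eq:SESDGLAs_II}),
\[
0 \to \operatorname{\mathfrak{End}}(W[1]_E)^\bullet \to C^\bullet_{\mathrm{def},\mathrm{lin}}(W) \to C^\bullet_{\mathrm{def}}(A) \to 0,
\]
whose quotient map is precisely the canonical surjection in the statement. Its long exact cohomology sequence (\ref{eq:LES_VB}) shows that this surjection is a quasi-isomorphism if and only if the kernel $\operatorname{\mathfrak{End}}(W[1]_E)^\bullet$ is acyclic. So the whole problem reduces to proving acyclicity of this endomorphism subcomplex, after which $H^\bullet_{\mathrm{def},\mathrm{lin}}(W) = H^\bullet_{\mathrm{def}}(A)$ is immediate.

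Next I would use the Gracia-Saz--Mehta normal form to replace $W$ by the pull-back algebroid $q^!A$, so that $\operatorname{\mathfrak{End}}(W[1]_E)^\bullet = \operatorname{\mathfrak{End}}(q^!A[1]_E)^\bullet$. Shifting the fiber degrees in the defining pull-back square turns it into a pull-back square of DG-manifolds with $q^!A[1]_E$ lying over $A[1]$ and $T[1]E$. Reading off endomorphisms from this square, I expect the canonical identification of DG-modules
\[
\operatorname{\mathfrak{End}}(q^!A[1]_E)^\bullet \cong C^\bullet(A) \otimes_{\Omega^\bullet(M)} \operatorname{\mathfrak{End}}(T[1]E)^\bullet,
\]
which reduces acyclicity of the left-hand side to acyclicity of the tangent factor $\operatorname{\mathfrak{End}}(T[1]E)^\bullet$.

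To handle that factor I would specialize (\ref{eq:SESDGLAs}) to $TE$, obtaining a short exact sequence with middle term $C^\bullet_{\mathrm{def},\mathrm{lin}}(TE)$ and quotient $C^\bullet_{\mathrm{def}}(TM)$. The middle is acyclic by Proposition \ref{prop:TE} and the quotient is acyclic since $TM \Rightarrow M$ has trivial deformation cohomology, so the long exact sequence forces the kernel $\operatorname{\mathfrak{End}}(T[1]E)^\bullet$ to be acyclic as well. Rather than stop at this abstract argument, I would use the explicit natural contracting homotopy $H$ on $\mathfrak X(T[1]E)^\bullet$ of \cite{spar:def, vit:sh-Lie}: by naturality it restricts to $\mathfrak X_{\mathrm{lin}}(T[1]E)^\bullet$ and to the endomorphism subcomplex, giving a contracting homotopy $h'$ on $\operatorname{\mathfrak{End}}(T[1]E)^\bullet$ that is graded $C^\bullet(A)$-linear under the tensor decomposition. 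I would then set $h(\omega \otimes \Phi) := (-)^{\omega}\,\omega \otimes h'(\Phi)$ and verify directly that $h$ is a contracting homotopy for the differential $[D,-]$ on $\operatorname{\mathfrak{End}}(q^!A[1]_E)^\bullet$, yielding the desired acyclicity.

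The main obstacle is the graded-geometric bookkeeping rather than any deep idea: one must check that the pull-back square of DG-manifolds genuinely produces the stated tensor-product identification over $\Omega^\bullet(M)$, and that $H$ really does restrict to $\operatorname{\mathfrak{End}}(T[1]E)^\bullet$ and is compatible with tensoring over $C^\bullet(A)$ so that $h$ interacts correctly with the differential. Once these compatibilities are established, the identity $[\,\delta, h\,] = \mathrm{id}$ on the kernel is a routine computation, and acyclicity of the kernel delivers the quasi-isomorphism.
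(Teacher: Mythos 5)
Your proposal is correct and follows essentially the same route as the paper: reduce via the long exact sequence to acyclicity of the kernel $\operatorname{\mathfrak{End}}(W[1]_E)^\bullet$, use the Gracia-Saz--Mehta normal form and the shifted pull-back square to identify it with $C^\bullet(A)\otimes_{\Omega^\bullet(M)}\operatorname{\mathfrak{End}}(T[1]E)^\bullet$, and contract the tangent factor using the restriction of the canonical homotopy $H$ from \cite{spar:def, vit:sh-Lie}, extended $C^\bullet(A)$-linearly with the sign $h(\omega\otimes\Phi)=(-)^\omega\omega\otimes h'(\Phi)$. The only cosmetic difference is that the paper first records the injectivity in cohomology coming from the fiber-contractibility result of \cite{spar:def} before upgrading to a quasi-isomorphism, a step your argument renders unnecessary.
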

Morally, deforming a type 1 VB-algebroid is the same as deforming its base Lie algebroid.

\noindent \textbf{Acknowledgments.} We thank the referee for reading carefully our manuscript, and for several suggestions that improved a lot the presentation. PPLP and LV are members of GNSAGA of INdAM.

\appendix

\section{The homogeneity structure of a vector bundle a linear multivectors}\label{Sec:homogeneity}

In this appendix, for the reader's convenience, we recall the well-known concepts of homogeneity structure of a vector bundle and of linear multivectors 
on its total space. We make no claim of originality: these ideas appeared (probably for the first time, e.g.) in \cite{grab:vect}, \cite{iglesias:univ} and \cite{bursztyn:mult}, respectively. In the last two references, the reader can also find the proof of (a version of) Propositions 
\ref{prop:lin_multi}. With respect to those references, we will offer just a slightly different point of view, in order to make the presentation consistent. Notations and conventions in this appendix are used throughout the paper, sometimes without further comments.

Let $E \to M$ be a vector bundle. The monoid $\mathbb R_{\geq 0}$ of non-negative real numbers acts on $E$ by homotheties $h_\lambda: E \to E$ (fiber-wise scalar multiplication). The action $h : \mathbb R_{\geq 0} \times E \to E$, $e \mapsto h_\lambda (e)$, is called the \emph{homogeneity structure} of $E$. The homogeneity structure (together with the smooth structure) fully characterizes the vector bundle structure \cite{grab:vect}. In particular, it determines the addition. This implies that \emph{every notion that involves the linear structure of $E$ can be expressed in terms of $h$ only}: for example, a smooth map between the total spaces of two vector bundles is a bundle map if and only if it commutes with the homogeneity structures.

The homogeneity structure isolates a distinguished subspace in the algebra 
$\mathfrak X^\bullet (E)$ of multivectors on the total space $E$ of the vector bundle.

\begin{definition} 

A multivector $X \in \mathfrak X^\bullet (E)$ is (\emph{homogeneous}) \emph{of weight $q$} if and only if
\begin{equation}\label{eq:hom_multivect}
h_\lambda^* X = \lambda^q X
\end{equation}
for all $\lambda > 0$.  The space of $k$-vector fields of weight $q$ on $E$ will be denoted $\mathfrak X^k_q (E)$. We denote simply by $C^\infty_q(E) := \mathfrak X^0_q (E)$ the space of functions of weight $q$ and by $\mathfrak X_q (E) := \mathfrak X^1_q(E)$ the space of vector fields of weight $q$.
\end{definition}

Clearly, for $q \geq 0$, weight $q$ functions coincide with functions on $E$ that are fiber-wise polynomial of degree $q$, while for $q < 0$ there are no non-zero functions of weight $q$. In particular, weight-zero functions are fiber-wise constant functions, i.e. pull-backs of functions on the base $M$. We refer to them as \emph{core functions} and we denote $C^\infty_{\mathrm{core}}(E) := C^\infty_0(E)$. 

The functorial properties of the pull-back imply that the grading defined by the weight is natural with respect to all the usual operations on functions
and (multi)vector fields. From this remark, we easily see that \emph{there are no non-zero $k$-vector fields of weight less than $-k$}.

\begin{definition}
A functions 
on $E$ is \emph{linear} if it is of weight 1. More generally, a $k$-vector field is \emph{linear} if it is of weight $1-k$. We denote by $C^\infty_{\mathrm{lin}}(E)$, 
$\mathfrak X_{\mathrm{lin}}(E)$ and $\mathfrak X^\bullet_{\mathrm{lin}}(E)$ 
the spaces of linear functions, 
vector fields and multivectors, 
respectively. 
\end{definition}

Linear functions are precisely fiber-wise linear functions. The definition of linear multivector may sound a little strange, but it is motivated (among other things) by the following proposition.

\begin{proposition}\label{prop:lin_multi}
Let $X \in \mathfrak X^k (E)$. The following conditions are equivalent:
\begin{enumerate} 
\item $X$ is linear;
\item $X$ takes
	\begin{enumerate}
	\item $k$ linear functions to a linear function,
	\item $k-1$ linear functions and a core function to a core function,
	\item $k-i$ linear functions and $i$ core functions to $0$, for every $i \geq 2$;
	\end{enumerate}
\item If $(x^i)$ are local coordinates on $M$ and $(u^\alpha)$ are linear fiber coordinates on $E$, $X$ is locally of the form
\begin{equation}\label{eq:mult_lin_coord}
X = X^{\alpha_1 \dots \alpha_{k-1} i} (x) \dfrac{\partial}{\partial u^{\alpha_1}} \wedge \dots \wedge \dfrac{\partial}{\partial u^{\alpha_{k-1}}} \wedge \dfrac{\partial}{\partial x^i} + X^{\alpha_1 \dots \alpha_k}_\beta (x) u^\beta \dfrac{\partial}{\partial u^{\alpha_1}} \wedge \dots \wedge \dfrac{\partial}{\partial u^{\alpha_k}}.
\end{equation}
\end{enumerate}
\end{proposition}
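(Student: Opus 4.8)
The plan is to prove the cycle of implications (1) $\Rightarrow$ (2) $\Rightarrow$ (3) $\Rightarrow$ (1). The engine for the first implication is the naturality of the weight grading under contraction, noted in this appendix right after the definition of weight: since each $h_\lambda$ with $\lambda > 0$ is a \emph{diffeomorphism} of $E$ and $h_\lambda^*$ commutes with $d$, one has
\[
h_\lambda^* \big( X(f_1, \ldots, f_k) \big) = (h_\lambda^* X)(h_\lambda^* f_1, \ldots, h_\lambda^* f_k),
\]
so that if $X$ has weight $q$ and each $f_j$ has weight $q_j$, then $X(f_1, \ldots, f_k)$ has weight $q + \sum_j q_j$. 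Assuming (1), i.e.~$q = 1-k$, I would simply add up weights: $k$ linear functions (weight $1$ each) give a result of weight $1$, hence linear (case (a)); $k-1$ linear functions and one core function give weight $0$, hence core (case (b)); and $k-i$ linear functions together with $i \geq 2$ core functions give weight $1-i \leq -1$, so the result vanishes because there are no non-zero functions of negative weight (case (c)). This settles (1) $\Rightarrow$ (2).

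For (2) $\Rightarrow$ (3) I would pass to local coordinates $(x^i, u^\alpha)$, in which $h_\lambda(x,u) = (x, \lambda u)$, and expand $X$ in the coordinate frame, grouping its monomials by bidegree $(p,r)$, where $p$ counts the $\partial/\partial u$-factors and $r = k-p$ the $\partial/\partial x$-factors. The key observation is that the $(p,r)$-component is extracted faithfully by evaluating $X$ on exactly $p$ of the linear coordinate functions $u^\alpha$ (whose differentials are pure $du$'s) and $r$ of the core functions $x^i$ (whose differentials are pure $dx$'s), since in the coordinate coframe all mixed pairings vanish; as the indices range over all choices, every monomial of $X$ is reached. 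Condition (2)(c) then forces every component with $r \geq 2$ to vanish, condition (2)(b) forces the coefficient of each $(k-1,1)$-component to be independent of $u$ (a core function), and condition (2)(a) forces the coefficient of each $(k,0)$-component to be fiber-wise linear in $u$. These are exactly the two surviving families of terms displayed in (\ref{eq:mult_lin_coord}), which gives (3).

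Finally, (3) $\Rightarrow$ (1) is a direct homogeneity computation in coordinates: from $h_\lambda^* u^\alpha = \lambda u^\alpha$ one reads off that $u^\alpha$, $du^\alpha$ have weight $1$ and $\partial/\partial u^\alpha$ has weight $-1$, while $x^i$, $dx^i$, $\partial/\partial x^i$ all have weight $0$. The first family of monomials in (\ref{eq:mult_lin_coord}) then has weight $(k-1)(-1) + 0 = 1-k$ and the second has weight $1 + k(-1) = 1-k$, so $X$ is homogeneous of weight $1-k$, i.e.~linear by definition. The step I expect to require the most care is the bookkeeping in (2) $\Rightarrow$ (3): one must verify that evaluation on coordinate linear and core functions is both faithful (it isolates a single bidegree component) and exhaustive (it reaches all of them), so that conditions (a)--(c) genuinely pin down all of $X$ and not merely a proper subset of its components; the remaining arguments are routine weight accounting.
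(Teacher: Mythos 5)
Your proof is correct. Note that the paper itself does not prove Proposition \ref{prop:lin_multi}: it explicitly defers to the references \cite{iglesias:univ} and \cite{bursztyn:mult}, so there is no in-paper argument to compare against. Your three-step cycle is the standard one and is complete: the weight bookkeeping in (1) $\Rightarrow$ (2) correctly uses the naturality $h_\lambda^*\langle X, df_1\wedge\cdots\wedge df_k\rangle = \langle h_\lambda^*X, d(h_\lambda^*f_1)\wedge\cdots\wedge d(h_\lambda^*f_k)\rangle$ together with the fact that there are no non-zero functions of negative weight; the extraction of bidegree components in (2) $\Rightarrow$ (3) is legitimate because $X(f_1,\ldots,f_k)$ depends only on the differentials of the $f_j$ at each point, so evaluating on the (locally defined, but locally extendable) coordinate functions $u^\alpha$ and $x^i$ faithfully and exhaustively recovers every coefficient of the coordinate frame expansion; and (3) $\Rightarrow$ (1) is the routine check that both families of monomials in (\ref{eq:mult_lin_coord}) have weight $1-k$.
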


\end{document}